\def\nz{\ifmmode {I\hskip -3pt N} \else {\hbox {$I\hskip -3pt N$}}\fi}
\def\zz{\ifmmode {Z\hskip -4.8pt Z} \else
       {\hbox {$Z\hskip -4.8pt Z$}}\fi}
\def\qz{\ifmmode {Q\hskip -5.0pt\vrule height6.0pt depth 0pt
       \hskip 6pt} \'else {\hbox
       {$Q\hskip -5.0pt\vrule height6.0pt depth 0pt\hskip 6pt$}}\fi}
\def\rz{\ifmmode {I\hskip -3pt R} \else {\hbox {$I\hskip -3pt R$}}\fi}
\def\cz{\ifmmode {C\hskip -4.8pt\vrule height5.8pt\hskip 6.3pt} \else
       {\hbox {$C\hskip -4.8pt\vrule height5.8pt\hskip 6.3pt$}}\fi}
\def\qed{\hbox {\hskip 1pt \vrule width 4pt height 6pt depth 1.5pt
        \hskip 1pt}\\}
\newcommand {\pa}{\partial}
\newtheorem{theorem}{Theorem}[section]
\newtheorem{lemma}[theorem]{Lemma}
\newtheorem{proposition}[theorem]{Proposition}
\newtheorem{remark}[theorem]{Remark}
\newtheorem{corollary}[theorem]{Corollary}
\newtheorem{conjecture}[theorem]{Conjecture}
\title{Eigenvalues of the Neumann magnetic Laplacian in the unit disk}
\date{\today}
\author{Bernard Helffer\footnote{Laboratoire de Math\'ematiques Jean Leray, Nantes Universit\'e and CNRS, Nantes, France.
{\tt Bernard.Helffer@univ-nantes.fr}}, Corentin L\'ena\footnote{Universit\`a degli Studi di Padova, Dipartimento di Tecnica e Gestione dei Sistemi Industriali (DTG), Stradella S. Nicola 3, 36100 Vicenza and  Dipartimento di Matematica ``Tullio Levi-Civita", via Trieste 63, 35121 Padova, Italy. {\tt corentin.lena@unipd.it}. Corresponding author}}
\begin{document}
\bibliographystyle{plain}
\maketitle
\begin{abstract} 
In this paper, we study the first eigenvalue of the magnetic Laplacian with Neumann boundary conditions in the unit disk $\mathbb D$ in $\mathbb R^2$. There is a rather complete asymptotic analysis when the constant magnetic field tends to $+\infty$  and some inequalities seem to hold for any value of this magnetic field, leading to rather simple conjectures. Our goal is to explore these questions by revisiting a classical picture of the physicist D. Saint-James theoretically and numerically. On the way, we revisit the asymptotic analysis  in light of  the asymptotics obtained by Fournais-Helffer, that we can improve by combining them with  a formula stated by Saint-James.

\end{abstract}
\section{Introduction}
\label{s:intro}

In this paper, we study the first eigenvalue of the magnetic Laplacian with Neumann boundary conditions in the unit disk $\mathbb D$ in $\mathbb R^2$. Let us formulate the problem more precisely. Using the standard choice 
\[\mathbf A:x=(x_1,x_2)\in\mathbb R^2\mapsto (-x_2/2,x_1/2)\in\mathbb R^2\]
for the so-called magnetic potential, we define the self-adjoint operator $ H_\beta$ (with $\beta\in\mathbb R$) by 
\[{ H_\beta u}:=(i\nabla+\beta\,\mathbf A(x))^2u=-\Delta u+2i\,\beta\,\mathbf A\cdot \nabla u+\beta^2|\mathbf A|^2u\,,\]
for all $u$ in the domain
\[\mathcal D:=\{u\in H^2(\mathbb D)\,:\,\nabla u\cdot \vec{\nu}=0\mbox{ on }\partial \mathbb D\},\]
where $\vec{\nu}$ is the outward-pointing unit normal. It is well known that $H_\beta$ is a self-adjoint operator in $L^2(\mathbb D)$, non-negative (positive if $\beta\neq0$),  with compact resolvent. Let $\lambda(\beta)$ denote its lowest eigenvalue. 
Our goal is to understand in detail the dependence of $\lambda(\beta)$ on the parameter $\beta$ (interpreted as the strength of the magnetic field).\\  
The analysis of this Neumann problem is strongly motivated by the analysis of surface  superconductivity in physics and the Neumann boundary condition is crucial (see \cite{SdG}). At the mathematical level the case of the disk is a way towards understanding the role of the curvature of the boundary. We refer to \cite{BS}, \cite{He-Mo} and to \cite{FH4} for a presentation of the state of the art in 2014 including (to mention some of them)  works by Bernoff-Sternberg \cite{BS}, Bauman-Phillips-Tang \cite{BPT} (where the analysis on the disk plays an important role) 
 and Lu-Pan \cite{LuPa}. \\
Notice that the Dirichlet problem has been analyzed intensively in the last years with  similar techniques (see \cite{SoSo,BW24} and references therein) but let us emphasize that the phenomena observed in the  Neumann case are quite different \cite{CLPS2,FH3,FPS0}.

 For the analysis of the spectrum of $H_\beta$, using the radial symmetry of the domain $\mathbb D$, we  pass to the polar coordinates
 \[(x_1,x_2)=(r\cos \theta,r\sin \theta)\]
  and  use a decomposition in Fourier series with respect to the variable $\theta$. We are led to consider
  the family of $1D$-operators indexed by $n\in \mathbb Z$
  \begin{equation}\label{eq:Hn}
  H_{n,\beta} := -\frac {d^2}{dr^2} -\frac 1r \frac{d}{dr} +\left(\frac \beta 2  r-\frac{n}{r} \right)^2\,,
  \end{equation}
  with Dirichlet condition at $r=0$ and Neumann condition at $r=1$\footnote{For $n=0$, we assume Neumann at $r=0$.}. The operators $ H_{n,\beta}$ are self-adjoint in the Hilbert space $L^2((0,1),r\,dr)$.
 
 For each of these operators, we look at the lowest eigenvalue $\lambda(n,\beta)$ as a function of $\beta$. It follows from standard perturbation theory of self-adjoint operators that each map $\beta\mapsto \lambda(n,\beta)$ is real-analytic. We then get the lowest eigenvalue from 
  \begin{equation}\label{eq:infimum}
  \lambda (\beta) =\inf_{n\in\mathbb Z} \lambda(n,\beta)\,.
  \end{equation}
As explained in Section \ref{s:curves}, we can restrict ourselves to $\beta\ge0$ and $n\in\mathbb N$ without loss of generality.  
 
We often find it more convenient to consider auxiliary maps, defined in $(0,+\infty)$ by
\[\beta\mapsto\eta(n,\beta):=\frac{\lambda(n,\beta)}{\beta}\]
and
\[\beta\mapsto\eta(\beta):=\frac{\lambda(\beta)}{\beta}.\]

Our first goal (suggested by previous work and by numerical computations) is to understand the intersection of the curves $\beta\mapsto \lambda(n,\beta)$ and $\beta\mapsto \lambda(n+1,\beta)$ (or, equivalently, of the curves $\beta\mapsto \eta(n,\beta)$ and $\beta\mapsto \eta(n+1,\beta)$). We give a complete proof of a formula due to the physicist D. Saint-James \cite{S-J}.

\newpage

\begin{theorem} \label{theorem:sj}
	Let $n\in \mathbb N$ and let us assume that $\beta^*$ is a solution in $[0,+\infty)$ of the equation 
	\[\lambda(n,\beta)=\lambda(n+1,\beta).\]
	Then, with the notation
	\[\eta^*:=\eta(n,\beta^*)=\eta(n+1,\beta^*),\]
	we have
	\[\beta^*=2\eta^*+2n+1+\sqrt{(2\eta^*+1)^2+8n\eta^*},\]
and, as a consequence, $\beta^*>2(n+1)$. 
\end{theorem}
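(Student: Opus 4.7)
My plan is to use the explicit form of the radial eigenfunctions in terms of Kummer's confluent hypergeometric function $M$, to rewrite the Neumann boundary condition at $r=1$ as an algebraic identity at $s_0=\beta^*/2$, and then to combine the identities for modes $n$ and $n+1$ via the standard contiguous relations, thereby eliminating the $M$-values and producing a polynomial equation in $\beta^*$, $\eta^*$ and $n$ alone.

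Concretely, the ansatz $u(r)=r^n e^{-\beta r^2/4}\,v(\beta r^2/2)$ reduces $H_{n,\beta}u=\lambda u$ to Kummer's equation $s v''+(n+1-s)v'+\frac{\eta-1}{2}v=0$ in $s=\beta r^2/2$, whose unique solution regular at the origin is $v(s)=M(a,n+1,s)$ with $a=(1-\eta)/2$. The crucial structural point is that at the crossing the two eigenfunctions share the first Kummer parameter $a=(1-\eta^*)/2$; only the second index shifts from $n+1$ to $n+2$. Differentiating and using $M'(a,b,s)=\frac{a}{b}M(a+1,b+1,s)$ turns $u_n'(1)=u_{n+1}'(1)=0$ into
\[
(s_0-n)\,M(a,n+1,s_0)=\tfrac{2 s_0 a}{n+1}\,M(a+1,n+2,s_0),\qquad (s_0-n-1)\,M(a,n+2,s_0)=\tfrac{2 s_0 a}{n+2}\,M(a+1,n+3,s_0).
\]

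To eliminate the four Kummer values I would use the contiguous identities $M(a+1,b+1,s)=\frac{b}{s}[M(a+1,b,s)-M(a,b,s)]$ and $M(a,b+1,s)=\frac{b}{s}[M(a,b,s)-M(a-1,b,s)]$, together with the three-term ${}_1F_1$ recurrence in $a$. Applied at $b=n+2$, the first identity combined with the second Neumann condition expresses $M(a+1,n+2,s_0)$ in terms of $M(a,n+2,s_0)$; the first Neumann condition gives the same quantity in terms of $M(a,n+1,s_0)$. A parallel computation using the second contiguous identity and the $a$-recurrence yields an independent linear relation between $M(a,n+1,s_0)$ and $M(a,n+2,s_0)$. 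Eliminating their ratio collapses the system to the single polynomial identity
\[
(\beta^*)^2-2\beta^*(2\eta^*+2n+1)+4n(n+1)=0,
\]
equivalent to the compact factorization $4\beta^*\eta^*=(\beta^*-2n)(\beta^*-2(n+1))$.

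Solving this quadratic for $\beta^*$ yields the two candidates $2\eta^*+2n+1\pm\sqrt{(2\eta^*+1)^2+8n\eta^*}$, whose product equals $4n(n+1)$ and whose sum is $4\eta^*+4n+2$. Since $\eta^*>0$, the $+$ root automatically satisfies $\beta^*>2\eta^*+2n+1+(2\eta^*+1)=4\eta^*+2n+2>2(n+1)$, giving the stated inequality as an immediate consequence of the formula; at the same time, evaluating the quadratic at $\beta=2(n+1)$ produces the value $-8\eta^*(n+1)<0$, confirming that the two roots straddle $2(n+1)$ and that the smaller root (which equals $0$ for $n=0$ and lies in $(0,2n)$ for $n\geq 1$) cannot correspond to a crossing of the first eigenvalues. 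The main obstacle I anticipate is the elimination step itself: the classical contiguous identities are numerous and most substitution sequences produce tautological rearrangements, so the order must be chosen carefully to generate a genuinely independent second relation between $M(a,n+1,s_0)$ and $M(a,n+2,s_0)$. Rigorously excluding the $-$ sign requires complementary information about the global shape of the curves $\beta\mapsto\lambda(n,\beta)$, in particular that first-eigenvalue crossings cannot occur in $[0,2(n+1)]$, for which I would rely on the monotonicity and asymptotic results assembled elsewhere in the paper.
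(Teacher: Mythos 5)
Your proposal follows essentially the same route as the paper: the Kummer-function representation of the eigenfunctions, the two Neumann conditions written as relations between $M(\nu,n+1,x)$, $M(\nu+1,n+2,x)$ and $M(\nu,n+2,x)$, $M(\nu+1,n+3,x)$, elimination via the contiguous relations to reach the quadratic $\beta^2-2(2\eta+2n+1)\beta+4n(n+1)=0$, and selection of the larger root from the a priori bound $\beta^*>2n$ of Proposition \ref{prop:inter}, which is exactly the ``complementary information'' you defer to. The only points you leave implicit --- actually carrying out the elimination (the paper does it with DLMF (13.3.3)--(13.3.4), reducing to a $2\times2$ linear system in $M(\nu,n+1,x)$ and $M(\nu,n+2,x)$ whose determinant must vanish) and the fact that $M(\nu,n+1,x)\neq 0$, needed to justify speaking of the ratio of the two Kummer values, which follows from uniqueness for the second-order ODE --- are both routine and are handled explicitly in the paper.
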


 We then try to understand which $n$ realizes the infimum \eqref{eq:infimum} according to the value of $\beta$. We  combine the results of M. Dauge and B. Helffer \cite{DH1,DH2}, on eigenvalues variation for Sturm-Liouville operators, with Theorem~\ref{theorem:sj}, to obtain the following result.
 
\begin{theorem} \label{theorem:groundstate} 
 For any $n\in \mathbb N$, there exists a unique solution $\beta_n $ in $[0,+\infty)$ of the equation 
	\[\lambda(n,\beta)=\lambda(n+1,\beta).\]

In addition, the sequence $(\beta_n )_{n\in \mathbb N}$ is strictly increasing, and 
\begin{enumerate}[(i)]
\item for all $\beta\in[0,\beta_0]$, $\lambda(\beta)=\lambda(0,\beta)$;
	\item for all $n\in\mathbb N\setminus\{0\}$, for all $\beta\in[\beta_{n-1},\beta_n ]$, $\lambda(\beta)=\lambda(n,\beta)$.
\end{enumerate}

Furthermore, the equation $\lambda(k,\beta)=\lambda(\beta)$, with $\beta\ge0$, is satisfied only for $k=n$ when $\beta\in(\beta_{n-1},\beta_n )$, with $n>0$, (only for $k=0$ when $\beta\in[0,\beta_0)$) and only for $k\in\{n,n+1\}$ when $\beta=\beta_n $ . 
\end{theorem}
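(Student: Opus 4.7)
The plan is to combine the Saint-James identity of Theorem~\ref{theorem:sj} with the Dauge-Helffer monotonicity results for Sturm-Liouville branches~\cite{DH1,DH2}. For existence of $\beta_n$, I would introduce the real-analytic function $g_n(\beta):=\lambda(n+1,\beta)-\lambda(n,\beta)$. At $\beta=0$, $H_{n,0}$ reduces to the radial Laplace-Bessel operator with Dirichlet at $0$ (for $n\ge 1$) and Neumann at $r=1$, whose lowest eigenvalue is strictly increasing in $n$, so $g_n(0)>0$. At the other end, the Fournais-Helffer asymptotics refined via Saint-James (see Section~\ref{s:curves}) force the minimizing index in \eqref{eq:infimum} to tend to $+\infty$ with $\beta$, so $g_n(\beta)<0$ for $\beta$ large enough. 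Continuity then yields at least one zero $\beta_n>0$.

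For uniqueness, apply Theorem~\ref{theorem:sj}: every zero $\beta^*$ of $g_n$ satisfies $\beta^*>2(n+1)$, and squaring the Saint-James identity while substituting $\eta^*=\lambda(n,\beta^*)/\beta^*$ yields the explicit algebraic relation
\[
\lambda(n,\beta^*)\;=\;\frac{(\beta^*-2n)(\beta^*-2(n+1))}{4},
\]
so every crossing of $\lambda(n,\cdot)$ and $\lambda(n+1,\cdot)$ lies on a single upward-opening parabola in the $(\beta,\lambda)$-plane, with $\beta$-roots exactly at $2n$ and $2(n+1)$. Uniqueness then reduces to showing that the analytic curve $\beta\mapsto\lambda(n,\beta)$ meets this parabola at most once on $(2(n+1),+\infty)$. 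I would invoke the Dauge-Helffer theorems on eigenvalue variation to extract the Feynman-Hellmann inequality $g_n'(\beta^*)<0$ at every zero; together with the signs of $g_n$ at $0$ and $+\infty$ this forces a unique zero.

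The strict increase of $(\beta_n)_{n\ge 0}$ and the identification (i)-(ii) then follow by induction on $n$. The base $n=0$ is immediate from $\lambda(0,0)=0$ combined with uniqueness. For the inductive step, the transversal crossing at $\beta_n$ just established makes $\lambda(n+1,\cdot)$ the strictly smaller of the two branches immediately to the right of $\beta_n$; the inductive hypothesis gives $\lambda(k,\beta)>\lambda(n,\beta)$ on $(\beta_{n-1},\beta_n)$ for all $k\neq n$, and this strict ordering extends up to $\beta_n$ for $k\notin\{n,n+1\}$ by analyticity, which is exactly the ``only for $k\in\{n,n+1\}$ at $\beta=\beta_n$'' clause of the theorem. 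To propagate the identification to $(\beta_n,\beta_{n+1}]$, one observes that any competing collision $\lambda(k,\beta)=\lambda(n+1,\beta)$ with $k\ne n$ in that interval would force, via the parabolic constraint of Theorem~\ref{theorem:sj} applied successively to consecutive pairs, an earlier consecutive crossing violating uniqueness; the same bookkeeping yields $\beta_{n+1}>\beta_n$.

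The main obstacle is the uniqueness step: extracting the sharp Feynman-Hellmann inequality $g_n'(\beta^*)<0$ from the Dauge-Helffer Sturm-Liouville monotonicity requires a careful comparison of the two competing eigenfunctions at the crossing and is the technical heart of the argument. Once this is secured, existence, strict increase, and the ground-state identification all follow cleanly from the parabolic constraint furnished by Theorem~\ref{theorem:sj}, continuity, and the inductive scheme above.
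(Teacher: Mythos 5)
Your overall strategy is the paper's: existence by continuity from the small- and large-$\beta$ asymptotics, uniqueness from the sign of the derivative at a crossing obtained by combining the Dauge--Helffer variation formula with the Saint-James identity, then an ordering argument for the identification of $\lambda(\beta)$. However, there is a genuine gap at exactly the point you flag as ``the main obstacle'': you never establish $g_n'(\beta^*)<0$, and without it neither uniqueness nor the interlacing goes through. In the paper this is Proposition \ref{prop:interlacing}, and it does \emph{not} require any ``careful comparison of the two competing eigenfunctions'': Point (i) of Proposition \ref{prop:variation} gives $\eta'(m,\beta^*)$ as a positive factor (the squared boundary value of the eigenfunction of the branch $m$, which never needs to be compared with the other branch) times $\bigl((m/\sqrt{\beta^*}-\sqrt{\beta^*}/2)^2-\eta^*\bigr)$; substituting the quadratic relation \eqref{eqSingularEta} makes these brackets equal to $c_n(x^*-n)/(2x^*)$ and $c_{n+1}(n+1-x^*)/(2x^*)$ for $m=n$ and $m=n+1$ respectively, and the bound $x^*>n+1$ from Theorem \ref{theorem:sj} fixes both signs separately. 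Your reformulation via the parabola $\lambda=(\beta-2n)(\beta-2(n+1))/4$ is correct but does not by itself deliver the derivative sign.

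Two further points need repair. First, the strict monotonicity of $(\beta_n)$ is asserted (``the same bookkeeping yields $\beta_{n+1}>\beta_n$'') but not proved; the paper derives it from the interlacing $\beta_{\min}(n)<\beta_n<\beta_{\min}(n+1)$, which in turn uses the derivative signs above together with the Dauge--Helffer result that $\beta\mapsto\eta(n,\beta)$ has a unique minimum (Proposition \ref{prop:variation}(iii)). Note that Theorem \ref{theorem:sj} only constrains \emph{consecutive} crossings, so your appeal to ``the parabolic constraint applied successively to consecutive pairs'' cannot rule out a collision $\lambda(k,\beta)=\lambda(n,\beta)$ with $|k-n|\ge2$ directly; one must instead chain the inequalities $\lambda(m,\beta)\lessgtr\lambda(m+1,\beta)$ according to whether $\beta\lessgtr\beta_m$, as in the paper's proof of Point (iii). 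Second, in the existence step, the fact that the minimizing index tends to $+\infty$ does not order $\lambda(n,\beta)$ against $\lambda(n+1,\beta)$ for large $\beta$; you need the asymptotics of Proposition \ref{propLarge}, which show $\eta(n,\beta)>\eta(n+1,\beta)$ for $\beta$ large.
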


   Finally, we study the following conjectures.
   
     \begin{conjecture}\label{conjUpperBound}
	For all $\beta>0$,
   $$ \eta(\beta) < \Theta_0\,.$$
   \end{conjecture}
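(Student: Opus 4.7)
The plan is to split $(0, +\infty)$ into regimes according to Theorem~\ref{theorem:groundstate}: on each interval $[\beta_{n-1}, \beta_n]$ (with the convention $\beta_{-1} := 0$) the ground state is realized by the Fourier mode $n$, so it suffices to prove $\eta(n, \beta) < \Theta_0$ separately on each such branch. This reduces the conjecture to a family of one-dimensional problems indexed by $n$.

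\emph{Branch $n=0$ (small $\beta$).} Testing $H_{0,\beta}$ against the constant $u \equiv 1$ (which satisfies the Neumann conditions at both $r=0$ and $r=1$) gives
\[
\lambda(0,\beta) \le \frac{\int_0^1 (\beta r/2)^2 \, r\, dr}{\int_0^1 r\, dr} = \frac{\beta^2}{8},
\]
so $\eta(\beta) \le \beta/8 < \Theta_0$ as long as $\beta < 8\Theta_0 \approx 4.72$. One then argues, by a separate estimate on $\lambda(1,\beta)$ combined with Theorem~\ref{theorem:sj}, that $\beta_0$ lies below this threshold, handling the first branch completely.

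\emph{Large-$n$ branches (large $\beta$).} Invoke the refinement of the Fournais--Helffer asymptotics that the paper announces: combining the boundary-layer expansion with the Saint-James identity of Theorem~\ref{theorem:sj}, one obtains an expansion of the form $\eta(\beta) = \Theta_0 - C_1 \beta^{-1/2} + O(\beta^{-1})$ with $C_1 > 0$, which establishes the strict inequality for all $\beta$ above an explicit threshold $\beta_\star$.

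\emph{Intermediate $\beta$.} This is the genuine obstacle. A natural strategy is to handle each branch separately and to exploit that solving the Saint-James quadratic of Theorem~\ref{theorem:sj} for $\eta^*$ yields the closed form
\[
\eta(\beta_n) = \frac{(\beta_n - 2n)(\beta_n - 2n - 2)}{4\beta_n}
\]
at every crossing. If one could further show that $\beta \mapsto \eta(n, \beta)$ is monotone non-decreasing on $[\beta_{n-1}, \beta_n]$ (as the numerical plots of the paper strongly suggest, and which is natural in view of the monotonicity results of \cite{DH1,DH2}), the conjecture would reduce to the single inequality $(\beta_n - 2n)(\beta_n - 2n - 2) < 4\Theta_0 \beta_n$, i.e.\ to an a priori estimate on $\beta_n$ uniform in $n$. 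The main obstacle is exactly this: ruling out any non-monotone excursion of $\eta(n, \cdot)$ on a branch, and pinning down $\beta_n$ tightly enough for moderate $n$, where neither the perturbative small-$\beta$ analysis nor the semiclassical large-$\beta$ asymptotics apply. This gap is presumably the reason the statement remains a conjecture rather than a theorem.
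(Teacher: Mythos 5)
First, a point of framing: the statement you are proving is Conjecture~\ref{conjUpperBound}, and the paper does not prove it either. What the paper does (Section~\ref{s:interlacing}) is reduce it to the single-sequence statement \eqref{eq:bound-eta}, namely $\eta_n^*<\Theta_0$ for all $n$, and then observe that this would follow from Conjecture~\ref{conj:sequence} (monotonicity of $(\eta_n^*)$) together with the limit $\eta_n^*\to\Theta_0$ from \eqref{eq:C1ab}. Your branch decomposition via Theorem~\ref{theorem:groundstate} is exactly the paper's starting point, and your closed form $\eta^*=(\beta_n-2n)(\beta_n-2n-2)/(4\beta_n)$ is a correct rearrangement of \eqref{eq:sj1}. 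However, the obstacle you identify as ``the genuine obstacle'' --- ruling out a non-monotone excursion of $\eta(n,\cdot)$ on a branch --- is not actually an obstacle: Proposition~\ref{prop:variation}(iii) (from Dauge--Helffer) shows that $\beta\mapsto\eta(n,\beta)$ has a \emph{unique} minimum $\beta_{\min}(n)$, and Theorem~\ref{theorem:interlacing}(ii) places it strictly inside $(\beta_{n-1},\beta_n)$; hence on each branch interval the supremum of $\eta$ is attained at an endpoint, giving \eqref{eq:bound-eta-n}. (Incidentally, this also means your hoped-for monotone non-decrease of $\eta(n,\cdot)$ on $[\beta_{n-1},\beta_n]$ is false --- the function dips down to $\beta_{\min}(n)$ first --- but that is harmless for an upper bound.) So the reduction to ``$\eta_n^*<\Theta_0$ for all $n$'' is already rigorous; it is that reduced statement that nobody can prove.

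The genuine gaps in your argument are therefore concentrated in the two places where you claim to verify the reduced inequality. For large $n$ you invoke $\eta(\beta)=\Theta_0-C_1\beta^{-1/2}+\mathcal O(\beta^{-1})$ ``above an explicit threshold $\beta_\star$'': no such explicit threshold is available, because the remainder constants in Theorem~\ref{theorem:disc} and in the expansion \eqref{eq:4.21} come from a quasi-mode construction and Agmon estimates and are not effective. This leaves infinitely many intermediate values of $n$ unverified, and for those you offer no argument (nor does the paper --- it resorts to numerics up to $n=400$ and to the monotonicity Conjecture~\ref{conj:sequence}). Even your $n=0$ branch is circular as written: you need $\beta_0<8\Theta_0$, which you propose to get ``by a separate estimate on $\lambda(1,\beta)$ combined with Theorem~\ref{theorem:sj}'' without supplying it; the cleaner route is the paper's \eqref{eq:bound-eta-0}, which reduces the first branch to $\eta_0^*<\Theta_0$ --- again an instance of the unproven reduced statement, known only numerically ($\eta_0^*\simeq 0.4619$). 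In short, your proposal reproduces the paper's reduction (with an unnecessary detour around a non-issue) but does not close the gap that makes this a conjecture.
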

   
   In the above statement, $\Theta_0$ is a well-known universal constant, called  the \emph{De Gennes constant} since it was introduced by him  \cite{DG,SdG}  in the sixties in the context of Surface Superconductivity (see for mathematical studies \cite{BH}, and later the book \cite{FH4} and references therein).  This constant is defined 
   \begin{equation}\label{eq:C1aa}
   \Theta_0:= \inf_{\xi\in \mathbb R} \lambda^{DG}(\xi)\,,
   \end{equation}
   where $\lambda^{DG} (\xi)$ is the lowest eigenvalue of the Neumann realization of the harmonic oscillator 
   \begin{equation}\label{eq:defh0}
   \mathfrak h_0(\xi):= D_t^2 + (t+\xi)^2
  \mbox{ on } \mathbb R^+\,, 
   \end{equation} 
   where $D_t:=-i\frac{d}{dt}$.
   Numerically\footnote{See \cite{MPS}.}, $ \Theta_ 0\simeq  0.590106$ and its first role is to give the asymptotics
   \begin{equation}\label{eq:C1ab}
   \lambda(\beta) = \Theta_0 \beta + o(\beta)
   \end{equation}
   as  $\beta$ tends to $+\infty$,  so that $\eta(\beta)\to\Theta_0$.\\
  This property  has been known since the nineties and we actually have more accurate asymptotics for the disk, playing an important role in the analysis of more general domains. Our main interest is in the inequality, which is equivalent to $\lambda(\beta)<\Theta_0\,\beta$ and is an improvement of  the result $\lambda(\beta)<\beta$, proved in \cite[Theorem 2.1]{CLPS1} for a general domain in $\mathbb R^2$. 
   
   We show in  Section \ref{s:interlacing} that Conjecture \ref{conjUpperBound}  can be deduced from the following other conjecture.
   \begin{conjecture}\label{conj:sequence}
 	The sequence $(\eta_n^*)_{n\in\mathbb N}$ is strictly increasing, where
    \[\eta_n^*:=\eta(n,\beta_n )=\eta(n+1,\beta_n )\] 
    and $\beta_n $ is defined in Theorem \ref{theorem:groundstate}.  
   \end{conjecture}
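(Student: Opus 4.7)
The plan is to reduce the conjectured monotonicity of $(\eta_n^*)_{n\in\mathbb N}$ to a monotonicity statement for the function $\beta\mapsto\eta(n+1,\beta)$ on a specific interval, and then attempt to establish that monotonicity either by a Feynman-Hellmann calculation or directly from the explicit formula of Theorem \ref{theorem:sj}.

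First I would use Theorem \ref{theorem:groundstate}: the index $n+1$ realises the infimum in \eqref{eq:infimum} precisely for $\beta\in[\beta_n,\beta_{n+1}]$, so $\eta(\beta)=\eta(n+1,\beta)$ throughout this interval. Since, by definition, $\eta_n^*=\eta(n+1,\beta_n)$ and $\eta_{n+1}^*=\eta(n+1,\beta_{n+1})$, the conjecture becomes the strict inequality
\[\eta(n+1,\beta_{n+1})>\eta(n+1,\beta_n),\]
so it suffices to show that $\beta\mapsto\eta(n+1,\beta)$ is strictly increasing on $[\beta_n,\beta_{n+1}]$. This is consistent with, and slightly stronger than, Conjecture \ref{conjUpperBound}.

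A first route uses Feynman-Hellmann applied to $H_{n+1,\beta}$: for the normalised ground state $u_{n+1}$ in $L^2((0,1),r\,dr)$,
\[\frac{d\lambda(n+1,\beta)}{d\beta}=\int_0^1 r^2\,V_{n+1}(r)\,u_{n+1}(r)^2\,dr,\qquad V_{n+1}(r):=\frac{\beta r}{2}-\frac{n+1}{r}.\]
Multiplying by $\beta$ and subtracting $\lambda(n+1,\beta)$, the desired inequality $\frac{d\eta}{d\beta}>0$ becomes the virial-type condition
\[\int_0^1\left(\frac{\beta^2 r^2}{2}-\beta(n+1)-\lambda(n+1,\beta)\right)u_{n+1}(r)^2\,r\,dr>0,\]
which one would aim to establish by combining the Kummer function representation of $u_{n+1}$ (used in the proof of Theorem \ref{theorem:sj}) with the known bound $\lambda(n+1,\beta)<\beta$.

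A second, perhaps cleaner, route is to work directly from the Saint-James identity, which is equivalent to $\eta_n^*=(\beta_n-2n)(\beta_n-2(n+1))/(4\beta_n)$. Setting $F(x,m):=(x-2m)(x-2(m+1))/(4x)$ and splitting
\[\eta_{n+1}^*-\eta_n^*=\bigl[F(\beta_{n+1},n+1)-F(\beta_n,n+1)\bigr]+\bigl[F(\beta_n,n+1)-F(\beta_n,n)\bigr],\]
a direct computation shows that the second bracket equals $(2(n+1)-\beta_n)/\beta_n$, which is negative by Theorem \ref{theorem:sj}. The map $x\mapsto F(x,n+1)$ is strictly increasing for $x>2\sqrt{(n+1)(n+2)}$, so one would aim to prove the quantitative refinement $\beta_n>2\sqrt{(n+1)(n+2)}$ (strengthening the bound $\beta_n>2(n+1)$ of Theorem \ref{theorem:sj}) together with a lower bound on $\beta_{n+1}-\beta_n$ large enough to let the positive bracket dominate. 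The main obstacle in either approach is sharp quantitative control on the implicitly defined sequence $(\beta_n)$: the asymptotics $\beta_n\sim 2n+2\sqrt{2n\,\Theta_0}$ make both refinements plausible for $n$ large, but small-$n$ cases appear to demand either a delicate Kummer-function virial estimate or direct numerical verification, which likely explains why the statement is posed as a conjecture.
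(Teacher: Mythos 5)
The statement you are addressing is posed in the paper as a \emph{conjecture}: the authors do not prove it, and only support it asymptotically (via the expansion \eqref{eq:bh2ab}, which shows $\eta_n^*=\Theta_0-C_1\beta_n^{-1/2}+\mathcal O(\beta_n^{-1})$ is increasing for $n$ large) and numerically for $n\le 400$. Your proposal is likewise not a proof, and you acknowledge this at the end; but one of your two routes contains an error worth flagging, and the other leaves exactly the hard part open.

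Your first route is based on a false sufficient condition. You claim it suffices to show that $\beta\mapsto\eta(n+1,\beta)$ is strictly increasing on $[\beta_n,\beta_{n+1}]$. By Proposition \ref{prop:interlacing} and Theorem \ref{theorem:interlacing}, one has $\eta'(n+1,\beta_n)<0$ and $\beta_{\min}(n+1)\in(\beta_n,\beta_{n+1})$: the map $\beta\mapsto\eta(n+1,\beta)$ strictly \emph{decreases} on $[\beta_n,\beta_{\min}(n+1)]$ before increasing again. So the monotonicity you propose to establish is provably false, and the virial-type inequality you would need is false at $\beta=\beta_n$. The inequality $\eta(n+1,\beta_{n+1})>\eta(n+1,\beta_n)$ is indeed equivalent to the conjecture, but it must be obtained despite the dip, not by excluding it.

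Your second route is algebraically correct: the Saint-James relation \eqref{eq:sj1} does give $\eta_n^*=(\beta_n-2n)(\beta_n-2(n+1))/(4\beta_n)$, your computation of the second bracket as $(2(n+1)-\beta_n)/\beta_n<0$ is right, and $x\mapsto F(x,n+1)$ is increasing precisely for $x>2\sqrt{(n+1)(n+2)}$. But the two quantitative inputs you then need --- the strengthened lower bound $\beta_n>2\sqrt{(n+1)(n+2)}$ and a lower bound on $\beta_{n+1}-\beta_n$ strong enough for the positive bracket to dominate --- are exactly the missing content. The paper controls $\beta_n$ and $\gamma_n=\beta_{n+1}-\beta_n$ only asymptotically (Theorem \ref{prop9.1} and \eqref{eq:7.14z}), via the Fournais--Helffer expansion combined with Saint-James's formula, and for small $n$ only numerically. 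Your reduction is a reasonable reformulation of the conjecture, not a proof of it; as stated, the conjecture remains open.
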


   We also investigate the so-called \emph{strong  diamagnetism} in $\mathbb D$, which states as a conjecture:
   \begin{conjecture}\label{conj:monotone} The map $\beta\mapsto \lambda(\beta)$ is monotonically increasing in the interval $[0,+\infty)$.
   \end{conjecture}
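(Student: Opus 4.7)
By Theorem~\ref{theorem:groundstate} (with the convention $\beta_{-1}:=0$), the map $\beta\mapsto\lambda(\beta)$ coincides with the real-analytic branch $\beta\mapsto\lambda(n,\beta)$ on each interval $[\beta_{n-1},\beta_{n}]$, and consecutive branches agree at the transition points; hence $\lambda$ is continuous, and strong diamagnetism reduces to showing that $\beta\mapsto\lambda(n,\beta)$ is strictly increasing on $[\beta_{n-1},\beta_{n}]$ for every $n\in\mathbb{N}$. To access this I would apply the Feynman--Hellmann formula to the analytic family in \eqref{eq:Hn}; since $\partial_{\beta}H_{n,\beta}=\tfrac{\beta}{2}r^{2}-n$ as a multiplication operator,
\[
\frac{d\lambda(n,\beta)}{d\beta}
=\frac{\beta}{2}\,\langle r^{2}\rangle_{n,\beta}-n,
\qquad
\langle r^{2}\rangle_{n,\beta}:=\int_{0}^{1}r^{2}\,|u_{n,\beta}(r)|^{2}\,r\,dr,
\]
where $u_{n,\beta}$ is the $L^{2}((0,1),r\,dr)$-normalized ground state of $H_{n,\beta}$.

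For $n=0$ the derivative equals $\tfrac{\beta}{2}\int_{0}^{1}r^{3}|u_{0,\beta}|^{2}\,dr$, strictly positive for $\beta>0$; so the first branch is settled. For $n\ge 1$ the task reduces to the concentration estimate
\[
\langle r^{2}\rangle_{n,\beta}\;>\;\frac{2n}{\beta}\qquad\text{for all }\beta\in[\beta_{n-1},\beta_{n}].
\]
The right-hand side equals $r_{0}(\beta,n)^{2}$, where $r_{0}=\sqrt{2n/\beta}$ is the radius at which the effective potential $V_{n}(r)=\bigl(\tfrac{\beta r}{2}-\tfrac{n}{r}\bigr)^{2}$ attains its minimum: the bound says that the quadratic mean radius of the ground state must exceed the radius of the well. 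Theorem~\ref{theorem:sj} applied to the crossing at $\beta_{n-1}$ yields $\beta_{n-1}>2n$, hence $r_{0}(\beta,n)<1$ throughout the interval, which is the natural compatibility condition.

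My plan for the concentration estimate splits into two regimes. In the large-$\beta$ regime I would appeal to the Fournais-Helffer asymptotics, refined in the paper through the Saint-James formula: the ground state localizes at scale $\beta^{-1/2}$ near $r=1$ (the surface mode), giving $\langle r^{2}\rangle_{n,\beta}=1-O(\beta^{-1/2})$, which dominates $2n/\beta$ once $\beta$ is large relative to $n$. For moderate $\beta$ I would derive virial-type identities from the Sturm-Liouville eigenvalue equation: testing $H_{n,\beta}u_{n,\beta}=\lambda(n,\beta)u_{n,\beta}$ against weights such as $r^{2}u_{n,\beta}$ and integrating by parts (tracking the Neumann term at $r=1$) produces relations among $\langle r^{2}\rangle_{n,\beta}$, $\langle r^{-2}\rangle_{n,\beta}$, the Dirichlet energy $\int_{0}^{1}|u'_{n,\beta}|^{2}\,r\,dr$ and $\lambda(n,\beta)$; combining these with the a priori bound $\lambda(n,\beta)<\beta$ valid on the ground-state interval should reduce the concentration bound to a one-variable inequality in $\beta$.

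The main obstacle I foresee is precisely the moderate-$\beta$ regime near the transition values $\beta_{n}$, where the ground state is changing angular momentum and is well approximated neither by the flat $n=0$ state nor by the surface mode, so that neither perturbative nor semiclassical tools apply cleanly. The margin afforded by Theorem~\ref{theorem:sj}, namely $\beta>2(n+1)$, rules out only the crudest obstruction $r_{0}\ge 1$. A promising complementary route is to take Conjecture~\ref{conj:sequence} as an input: strict monotonicity of $(\eta_{n}^{*})$ constrains the left and right derivatives of $\lambda$ at each $\beta_{n}$ and, combined with the Feynman-Hellmann formula above, could propagate positivity of $d\lambda/d\beta$ across successive transitions by induction on $n$, leaving only the open-interval strict monotonicity within each analytic branch, which is considerably more amenable to variational comparison.
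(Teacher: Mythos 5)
The statement you are addressing is labelled a \emph{conjecture} in the paper, and the paper does not prove it: it only establishes that $\beta\mapsto\lambda(\beta)$ is increasing for large $\beta$ (the Fournais--Helffer result recalled as Theorem~\ref{DiamagDisc}, sharpened in Theorem~\ref{th8.4strong} to $\lambda'_{\pm}(\beta_n)\to\Theta_0\mp\tfrac32 C_1|\xi_0|>0$), and supports the full claim numerically for $n\le 400$. Your proposal is likewise not a proof, and the gap is in the central step. The reduction to strict monotonicity of each branch $\beta\mapsto\lambda(n,\beta)$ on $[\beta_{n-1},\beta_n]$ is correct, and your Feynman--Hellmann identity agrees with the paper's (the formula preceding Lemma~\ref{lem:fps1}, equivalently Proposition~\ref{lem:derivative}). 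But the ``concentration estimate'' $\langle r^2\rangle_{n,\beta}>2n/\beta$ is, by that very identity, a verbatim restatement of $\lambda'(n,\beta)>0$: you have reformulated the conjecture, not reduced it. The virial route is left entirely unexecuted, and the delicate region is exactly $[\beta_{n-1},\beta_{\min}(n)]$, where by Theorem~\ref{theorem:interlacing}(ii) one has $\eta'(n,\beta)\le 0$, so that $\lambda'(n,\beta)=\eta(n,\beta)+\beta\,\eta'(n,\beta)$ is a difference of competing terms; on $[\beta_{\min}(n),\beta_n]$ positivity is immediate. Your large-$\beta$ regime adds nothing beyond Theorem~\ref{DiamagDisc}, which is not effective (no explicit $\overline\beta$), so the two regimes do not glue.

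The final suggestion, to take Conjecture~\ref{conj:sequence} as input, does not help: monotonicity of the \emph{values} $\eta_n^*$ carries no information about the one-sided derivatives $\lambda'_{\pm}(\beta_n)$ (the paper obtains those instead from the refined expansions at the crossing points together with the exact trace formula \eqref{eq:derivative} and the asymptotics of $f_{n,\beta_n}(1)$), and it would in any case make the argument conditional on another open conjecture. If you want to make genuine progress in the spirit of the paper, realistic targets are: an effective version of Theorem~\ref{DiamagDisc} with an explicit threshold $\overline\beta$, combined with a rigorous verification of $\lambda'(n,\beta)>0$ on $[\beta_{n-1},\beta_{\min}(n)]$ for the finitely many remaining $n$; note from Table~\ref{table:derivatives} that the margin is smallest at the right derivative $\lambda'(1,\beta_0)\approx 0.145$, so any such argument must be reasonably sharp there.
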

  It follows from the result of S. Fournais and B. Helffer for a general domain in $\mathbb R^2$ \cite{FH2} that the map is ultimately increasing, that is, there exists some $\overline{\beta}>0$ such that $\beta\mapsto \lambda(\beta)$ is increasing in $[\overline{\beta},+\infty)$. The point is to show that, in the case of the disk, the map is increasing in the whole interval $[0,+\infty)$.  Let us note that this property is known to hold when the Neumann boundary condition is replaced with Dirichlet (see the PhD thesis of S. Son \cite[Theorem 3.3.4]{SoSo}). 
  
In  Sections \ref{s8} and \ref{s6},   we present asymptotic and numerical computations supporting these conjectures.  We also show how the combination of  the results in \cite{FH2} with the approach developed in the first sections leads to new relations and permits to control
 the validity of the conjectures in the large magnetic field limit.\\
  
\noindent {\bf  Acknowledgments.\\}
 This problem has been discussed along the years with many colleagues sharing their ideas.  Many thanks to B. Colbois, S. Fournais, A. Kachmar, G. Miranda, M. Persson-Sundqvist, L. Provenzano, and  A. Savo for their contributions. 
 
C. L\'ena received financial support from the INdAM GNAMPA Project  ‘‘\emph{Operatori differenziali e integrali in geometria spettrale}" (CUP E53C22001930001).

The authors are grateful to the referee for his or her careful reading of the manuscript and for suggesting numerous improvements.
 \\

\section{Eigenvalue curves}\label{s:curves}
 
We have, for the operator $ H_{n,\beta}$ defined by Equation \eqref{eq:Hn}, 
\[ H_{n,\beta}=H_{-n,-\beta},\]
so that, from Equation \eqref{eq:infimum}, we get   $$\lambda(-\beta)=\lambda(\beta)\,.$$ In addition, a direct computation shows that 
\[H_{-n,\beta}=H_{n,\beta} +2n\beta,\]
and therefore, in particular
\[\lambda(-n,\beta)=\lambda(n,\beta)+2n\beta.\]
For $\beta>0$, this implies that the infimum
\[\lambda(\beta)=\inf_{n\in\mathbb Z}\lambda(n,\beta)\]
cannot be realized by a negative integer. Thus, if we want to study the map $\beta\mapsto\lambda(\beta)$, it is enough to study the maps $\beta\mapsto \lambda(n,\beta)$ for $n\ge0$ and $\beta\ge0$.

For $\beta=0$, we recover the usual Laplacian, with Neumann boundary conditions.  We therefore assume in the rest of this section that $\beta>0$. The Sturm-Liouville problem corresponding to $ H_{n,\beta}$ is
\begin{equation}\label{eqSL}
\left\{
\begin{array}{rl}
	-f''-\frac1rf'+\left(\frac{n}{r}-\frac{\beta}2r\right)^2f=&\lambda f \mbox{\ \ in \ \ }(0,1)\,,\\
	f(0)=0\mbox{ if }n>0,\ &f'(0)=0\mbox{ if }n=0\,,\\
	f'(1)=&0\,.
\end{array}
\right.
\end{equation}
We write $r\mapsto f_{n,\beta}(r)$ for the positive and normalized eigenfunction of \eqref{eqSL} associated with $\lambda(n,\beta)$.

We can express it using the Kummer function $z\mapsto M(a,b,z)$ (with the conventions and notation of the  \emph{Digital Library of Mathematical Functions} \cite[Chapter 13]{DLMF}):
\begin{equation}\label{eqFKummer}
	f_{n,\beta}(r)=C_n^\beta r^n e^{-\frac{\beta}4r^2}M\left(\frac12-\frac{\lambda}{2\beta},n+1,\frac{\beta}2r^2\right)
\end{equation}
where $\lambda=\lambda(n,\beta)$. \\
 Notice (cf \cite{MOS1966}) that when $a<c$,
 \begin{equation}\label{eq:kummer}
M (a,c,z)= \frac{\Gamma(c)}{\Gamma(c-a)\Gamma(a)} \int_0^1 e^{zt} t^{a-1} (1-t)^{c-a-1} dt\,.
\end{equation}

 Alternatively, we can use the Whittaker function $z\mapsto M_{\kappa,\mu}(z)$ (see for instance \cite[Equation (A.5)]{KLPS}\footnote{The authors thank Mikael Persson Sundqvist for communicating to them unpublished notes and programs.}):
\begin{equation}\label{eqFWhittaker}
	f_{n,\beta}(r)=\frac{D_n^\beta}r M_{\frac{\lambda}{2\beta}+\frac{n}2,\frac{n}{2}}\left(\frac{\beta}{2}r^2\right). 
\end{equation}
The constants $C_n^\beta$ and $D_n^\beta$, depending on $n$ and $\beta$, are chosen so that 
\begin{equation*}
	\int_0^1 f_{n,\beta}(r)^2\,r\,dr=1.
\end{equation*}

The eigenvalues of \eqref{eqSL} (and in particular the first eigenvalue, $\lambda(n,\beta)$) are the roots of the equation
\begin{equation}\label{eqEV}
	(f_{n,\beta})'(1)=0
\end{equation}
(where the left-hand side is understood as the function of $\lambda$ defined by Equation \eqref{eqFWhittaker}).
As an illustration, we present in Figure \ref{figEV}  the curves 
$$\beta\mapsto \eta(n,\beta)=\frac{\lambda(n,\beta)}{\beta}$$ for $n=0,1,\dots,20\,.$
They where drawn by cubic spline interpolation of a finite number of sample points, obtained from solving Equation \eqref{eqEV} numerically (with computations  done in \emph{Mathematica}\footnote{\emph{Wolfram 14.0 For Desktop}. Version: 14.0.0.0.}).

\begin{figure}
\begin{center}
	\includegraphics[width=.85\textwidth]{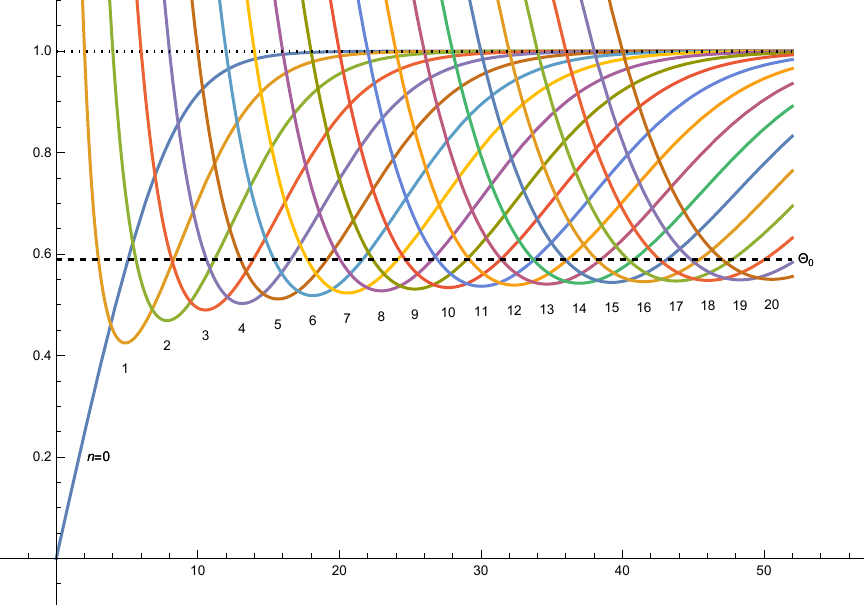}
	\caption{Plot of $\beta\mapsto\eta(n,\beta)=\frac{\lambda(n,\beta)}{\beta}$ for $n$ from 0 to 20. The black dotted line and the black dashed line are the constants $1$ and $\Theta_0$ respectively.}
	\label{figEV}
\end{center}
\end{figure}

In order to study the functions $\beta\mapsto \eta(n,\beta)$, let us give an alternative interpretation of $\eta(n,\beta)$. We perform the change of variable $\rho=\beta^{1/2} r$ and divide by $\beta$. We obtain a new Sturm-Liouville operator $G_{n,\beta}$, which is the Dirichlet-Neumann (Neumann-Neumann if $n=0$) realization in $(0, \beta^{1/2})$ of the differential operator
$$
-\frac{d^2}{d\rho^2} - \frac 1 \rho \frac{d}{d\rho} + \Big( \frac n \rho -\frac \rho 2 \Big)^2\,.
$$
This new operator is self-adjoint in the Hilbert space $L^2((0, \beta^{1/2}),\rho\, d \rho)$ (we recall that the measure has a weight) and $\eta(n,\beta)$ is its first eigenvalue.

\section{Properties of the eigenvalue curves}\label{s3}

\subsection{Asymptotic behavior}

\begin{proposition} \label{propSmall} Under the assumption $n>0$,  we have, as $\beta\to0^+$,
\[ \eta(n,\beta)=\frac{(j_{n,1}')^2}{\beta}(1+o(1)),\]
where $j_{n,1}'$ is the smallest positive zero of $J_n'$, the derivative of the Bessel function of the first kind $J_n$.

In addition, $\eta(0,\beta)\to 0$ as $\beta\to0^+$.
\end{proposition}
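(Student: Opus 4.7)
The idea is to view $H_{n,\beta}$ as a perturbation of the $\beta$-independent Bessel-type operator $H_{n,0}$ corresponding to $\beta=0$. Expanding the square gives
\[
\left(\frac{\beta}{2}\,r-\frac{n}{r}\right)^{2} \;=\; \frac{n^2}{r^2}-n\beta+\frac{\beta^2}{4}\,r^2,
\]
so the associated quadratic form on the natural form domain $V_n\subset L^2((0,1),r\,dr)$ (which is $\beta$-independent, since the potential has the same singular behavior $n^2/r^2$ near $0$ for every $\beta$) satisfies, for any $f\in V_n$ normalized in $L^2(r\,dr)$,
\[
q_{n,\beta}(f) \;=\; q_{n,0}(f)-n\beta+\frac{\beta^2}{4}\int_0^1 r^3\,|f(r)|^2\,dr,
\]
where $q_{n,0}(f)=\int_0^1\!\bigl(|f'|^2+n^2|f|^2/r^2\bigr)\,r\,dr$. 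Since $0\le\int_0^1 r^3|f|^2\,dr \le \int_0^1 r|f|^2\,dr = 1$, the min-max principle yields the two-sided bound
\[
\lambda(n,0)-n\beta \;\le\; \lambda(n,\beta) \;\le\; \lambda(n,0)-n\beta+\frac{\beta^2}{4}.
\]

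Next I identify $\lambda(n,0)$. For $n\ge 1$, the eigenvalue problem for $H_{n,0}$ reduces, via the change of variable $x=\sqrt{\lambda}\,r$, to the Bessel equation of order $n$; the requirement that the eigenfunction be in $L^2((0,1),r\,dr)$ near the origin excludes $Y_n$ and leaves $f(r)=c\,J_n(\sqrt{\lambda}\,r)$, so the Neumann boundary condition $f'(1)=0$ becomes $J_n'(\sqrt{\lambda})=0$; the lowest eigenvalue is therefore $\lambda(n,0)=(j_{n,1}')^2$. For $n=0$, the constant function shows $\lambda(0,0)=0$.

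Substituting into the two-sided bound gives, as $\beta\to 0^+$, $\lambda(n,\beta)=(j_{n,1}')^2+O(\beta)$ when $n\ge 1$, hence $\eta(n,\beta)=\lambda(n,\beta)/\beta=(j_{n,1}')^2\,\beta^{-1}(1+o(1))$; and $\lambda(0,\beta)=O(\beta^2)$, hence $\eta(0,\beta)=O(\beta)\to 0$. The only point calling for a little care is the verification that the form domain of $H_{n,\beta}$ does not depend on $\beta$, so that the min-max inequality can be applied on a single test-function class; this is routine, since on the bounded interval $(0,1)$ the perturbation $\beta^2 r^2/4$ is bounded and the singular term $n^2/r^2$ is already present in $H_{n,0}$. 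The rest is an elementary expansion and the textbook Bessel-function computation of $\lambda(n,0)$.
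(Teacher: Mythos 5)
Your proof is correct and follows essentially the same route as the paper: identify $\lambda(n,0)=(j_{n,1}')^2$ via the Bessel equation and use continuity of $\beta\mapsto\lambda(n,\beta)$ at $\beta=0$ (where the paper invokes standard perturbation theory, you give an explicit two-sided min--max bound, which in fact yields the slightly sharper quantitative error $\lambda(n,\beta)=(j_{n,1}')^2+O(\beta)$). The $n=0$ case is handled the same way in both arguments, by a test-function upper bound of order $\beta^2$.
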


\begin{proof} By definition,
$\eta(n,\beta)=\frac{\lambda(n,\beta)}{\beta}$.
From standard perturbation theory, the eigenvalues of $ H_{n,\beta}$ converge to the eigenvalues of $H_{n,0}$ as $\beta\to0$. In particular, $\lambda(n,\beta)\to\lambda(n,0)$. It is easily checked that $\lambda(n,0)=(j_{n,1}')^2$ (see \cite{H-PS} and references therein).

The last limit follows from the well-known bound $\lambda(0,\beta)\le \frac{1}{8}\beta^2$.  This can be obtained by using the constant function $f(r)=1$ in the associated Rayleigh quotient:
\[\lambda(0,\beta)\le\frac{\int_0^11\,( { H_{0,\beta}1})\,r\,dr}{\int_0^11^2\,r\,dr}=\frac{\beta^2}{4}\,\frac{\int_0^1r^3\,dr}{\int_0^1r\,dr}=\frac{\beta^2}{8}.\qedhere\]
\end{proof}

The following result is a special case of a theorem proved by A. Kachmar and G. Miranda \cite[Theorem 1]{Mi} with techniques inspired by \cite{BH}.

\begin{proposition} \label{propLarge} As $\beta\to+\infty$,
\[\lambda(n,\beta)=\beta- e^{-\frac{\beta}{2}}\left(\frac{\beta^{n+2}}{2^nn!}+o\left(\beta^{n+2}\right)\right).\]
\end{proposition}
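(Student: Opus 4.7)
The plan is to work directly with the explicit representation \eqref{eqFKummer} of the eigenfunction and to perform an asymptotic analysis of the Neumann boundary condition as $\beta\to+\infty$. Setting $a:=\frac{1}{2}-\frac{\lambda(n,\beta)}{2\beta}$, so that $\lambda(n,\beta)=\beta(1-2a)$, the Kummer derivative identity $\partial_z M(a,b,z)=\frac{a}{b}M(a+1,b+1,z)$ turns the condition $f_{n,\beta}'(1)=0$ into the implicit equation
\begin{equation}\label{eq:planeq}
\left(n-\frac{\beta}{2}\right) M\!\left(a,n+1,\frac{\beta}{2}\right)+\frac{\beta a}{n+1}\, M\!\left(a+1,n+2,\frac{\beta}{2}\right)=0.
\end{equation}
A soft Rayleigh-quotient bound with test function $r^n e^{-\beta r^2/4}$ (the Landau ground state on $\mathbb R^2$) already yields $\lambda(n,\beta)\le\beta$, hence $a\ge 0$, and forces $a\to 0^+$ as $\beta\to+\infty$. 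The task is then to extract the precise rate at which $a$ decays.

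Next I would expand each Kummer function in its relevant regime. Since $a\to 0^+$, the Taylor expansion in $a$ gives
\[
M\!\left(a,n+1,\frac{\beta}{2}\right)=1+a\sum_{k=1}^{\infty}\frac{n!\,(\beta/2)^k}{k\,(n+k)!}+O(a^2),
\]
and a Laplace-type analysis of this series (or equivalently repeated integration by parts applied to the representation \eqref{eq:kummer}, which is valid because $a<n+1$) shows that the sum is asymptotic to $n!\,e^{\beta/2}(\beta/2)^{-(n+1)}$ as $\beta\to+\infty$. For $M(a+1,n+2,\beta/2)$, the first parameter stays close to $1$, so the classical large-argument asymptotic $M(\alpha,c,z)\sim \Gamma(c)\Gamma(\alpha)^{-1}e^{z}z^{\alpha-c}$ applies and gives the leading behaviour $(n+1)!\,e^{\beta/2}(\beta/2)^{-(n+1)}$. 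Substituting both expansions into \eqref{eq:planeq} and keeping only the non-vanishing leading balance yields
\[
\left(n-\frac{\beta}{2}\right)+a\,\frac{n!\,2^{n}\,e^{\beta/2}}{\beta^{n}}\bigl(1+o(1)\bigr)=0,
\]
from which $a=\dfrac{\beta^{n+1}}{2^{n+1}\,n!}\,e^{-\beta/2}\bigl(1+o(1)\bigr)$, and the claimed asymptotic follows by $\lambda(n,\beta)=\beta-2\beta a$.

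The main obstacle I expect is the uniform control of the error terms in the expansion of $M(a,n+1,\beta/2)$: both the $O(a^2)$ Taylor remainder and the subleading tails of $\sum_{k\ge 1}n!(\beta/2)^k/(k(n+k)!)$ contain factors of $e^{\beta/2}$, and one must verify that, when multiplied by the (exponentially small) value of $a$, they stay of order $o(\beta^{n+2}e^{-\beta/2})$. A clean way to achieve this is a two-region decomposition of \eqref{eq:kummer} with a fixed cutoff $\delta\in(0,1)$: the piece over $[0,\delta]$ produces the constant-in-$\beta$ contribution $1$ (after dividing by $\Gamma(a)\Gamma(n+1-a)$), while the piece over $[\delta,1]$ carries the exponentially large contribution concentrated near $t=1$, where repeated integration by parts or a Watson-type expansion provides precise control. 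Alternatively, one may simply specialise Theorem~1 of \cite{Mi}, which treats general smooth planar domains and reduces to the present statement when the boundary curvature is taken to be $1$.
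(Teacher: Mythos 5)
First, a point of comparison: the paper does not actually prove Proposition \ref{propLarge} — it quotes it as a special case of \cite[Theorem 1]{Mi}. So the fallback in your last sentence coincides with what the paper does, while the body of your proposal is a genuinely different, self-contained route through the Kummer representation \eqref{eqFKummer} and the secular equation $f_{n,\beta}'(1)=0$. The computational core of that route is sound: your implicit equation is the correct form of the Neumann condition; the expansion $M(a,n+1,z)=1+a\,S(z)+O(a^2\cdots)$ with $S(z)=\sum_{k\ge1}n!\,z^k/(k(n+k)!)$ is right, and $S(z)\sim n!\,e^zz^{-(n+1)}$ follows cleanly from $S'(z)=n!\,z^{-(n+1)}\bigl(e^z-\sum_{j=0}^n z^j/j!\bigr)$; the leading balance gives $a=\frac{\beta^{n+1}}{2^{n+1}n!}e^{-\beta/2}(1+o(1))$ and $\lambda=\beta-2\beta a$ reproduces the constant $\beta^{n+2}/(2^nn!)$. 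What this buys is an elementary, disk-specific argument that could in principle be pushed to a full asymptotic expansion; what it costs is the error control you honestly flag, which is where most of the real work lies (your two-region treatment of \eqref{eq:kummer}, with $1/\Gamma(a)\sim a$ absorbing the $t^{a-1}$ singularity on $[0,\delta]$ and Watson's lemma near $t=1$, is a workable way to do it).

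There is, however, one genuine gap as written: the claim that the Rayleigh-quotient bound $\lambda(n,\beta)\le\beta$ ``forces $a\to0^+$''. It does not; it only gives $a\ge0$ (indeed $a\in(0,\tfrac12)$). Everything downstream — the Taylor expansion in $a$, and the use of $M(\alpha,c,z)\sim\Gamma(c)\Gamma(\alpha)^{-1}e^zz^{\alpha-c}$ with $\alpha$ near $1$ — silently assumes $a=o(1)$, i.e.\ the matching lower bound $\lambda(n,\beta)\ge\beta(1-o(1))$ for fixed $n$. That lower bound is the nontrivial half of the statement (it encodes the fact that for fixed $n$ the potential well sits at $r=\sqrt{2n/\beta}\to0$, far from the boundary, so the Neumann condition depresses the Landau level $\beta$ only exponentially little), and it is precisely what the Agmon-type localization in \cite{Mi} or \cite{BH} supplies. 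The gap is fixable inside your own framework: for $a$ in any compact subset of $(0,\tfrac12]$ the large-$z$ asymptotics of both Kummer functions hold uniformly, and the two terms of the secular equation are then asymptotic to $-\tfrac\beta2\,\frac{n!}{\Gamma(a)}e^zz^{a-n-1}$ and $+2\cdot\tfrac\beta2\,\frac{n!}{\Gamma(a)}e^zz^{a-n-1}$ respectively (the ratio is $2$ because $a\Gamma(a)=\Gamma(a+1)$), so their sum is nonzero and the equation has no roots there for large $\beta$; hence the first eigenvalue must have $a\to0$. Either include such an argument or import the lower bound from the literature — as it stands, your proof determines the asymptotics of a root \emph{assumed} to tend to $0$, not of the first eigenvalue.
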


From this proposition we have, as $\beta\to+\infty$,
\begin{equation}\label{eq:3.7} \eta(n,\beta)=1-  e^{-\frac{\beta}{2}} \left(\frac{\beta^{n+1}}{2^nn!}+o\left(\beta^{n+1}\right)\right).
\end{equation}
 Notice that the Dirichlet case is considered in \cite{BW24}.

\subsection{Variations}

In Section \ref{s:curves}, we have defined the operator $G_{n,\beta}$ and the map $\beta\mapsto \eta(n,\beta)$ for $\beta\in (0,+\infty)$ and $n\in \mathbb N$. However, these definitions make sense for any real $n\in[0,+\infty)$. We will use the extended definitions at some points of our analysis.  In particular, it follows from standard perturbation theory for self-adjoint operators that the map $(n,\beta)\mapsto \eta(n,\beta)$ is smooth, even real-analytic, and the Feynman-Hellmann formula gives us the derivative of $n\mapsto\eta(n,\beta)$, for a given $\beta>0$:
\begin{equation}\label{eqDern}
	\partial_{n} \eta(n,\beta)=\int_0^{\beta^{1/2}}\left(\frac{2n}{\rho^2}-1\right)g_{n,\beta}(\rho)^2\rho\,d\rho\,,
\end{equation}
where $g_{n,\beta}$ is the positive and normalized eigenfunction of the operator $G_{n,\beta}$ associated with $\eta(n,\beta)$.

The work of Dauge-Helffer \cite{DH1,DH2} gives a rather complete picture of the variation of $\beta\mapsto\eta(n,\beta)$ in $(0,+\infty)$, for a given $n\in\mathbb N$ (the results actually hold without assuming that $n$ is an integer).  The derivatives in the following statements are taken with respect to $\beta$.

\begin{proposition} \label{prop:variation} Let $n\in\mathbb N$.
\begin{enumerate}[(i)]
	\item For all $\beta>0$, 
	\[{\eta'(n,\beta)}=\frac12\, \big(g_{n,\beta}(\beta^{1/2})\big)^2\left(\left(\frac{n}{\beta^{1/2}}-\frac{\beta^{1/2}}{2}\right)^2-\eta(n,\beta)\right).\]
	\item The map $\beta \mapsto \eta(0,\beta)$ is increasing in $[0,+\infty)$.
	\item If $n>0$, the map $\beta\mapsto \eta(n,\beta)$ has a unique  point of minimum, which is denoted by $\beta_{\min}(n)$. In addition $\beta_{\min}(n)$ is non-degenerate and satisfies $$\beta_{\min}(n)> 2n\,.
	$$
 \end{enumerate}
\end{proposition}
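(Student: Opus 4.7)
The plan is to treat the three parts in order, working throughout with the scaled operator $G_{n,\beta}$ on $L^2((0,L),\rho\,d\rho)$ with $L := \beta^{1/2}$, Neumann at $\rho=L$ (and Dirichlet, or Neumann if $n=0$, at $\rho=0$). The virtue of this formulation, noted already in Section~\ref{s:curves}, is that the $\beta$-dependence of $\eta(n,\beta)$ is concentrated in the length of the interval. I denote $V(\rho):=(n/\rho-\rho/2)^2 = n^2/\rho^2 - n + \rho^2/4$; this $V$ is convex on $(0,+\infty)$, strictly decreasing on $(0,\sqrt{2n}]$, and attains its minimum value $0$ at $\rho=\sqrt{2n}$. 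I also write $W(\beta) := V(L) = n^2/\beta - n + \beta/4$, so that $W'(\beta) = 1/4 - n^2/\beta^2$ is positive precisely for $\beta > 2n$.

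Part (i) is obtained by a Hadamard-type differentiation of the Rayleigh identity
\[
\eta(n,\beta) = \int_0^L\bigl(g_{n,\beta}'(\rho)^2+V(\rho)g_{n,\beta}(\rho)^2\bigr)\rho\,d\rho.
\]
Applying Leibniz, the boundary contribution at $\rho=L$ equals $\tfrac12 V(L)g_{n,\beta}(L)^2$ (using $g_{n,\beta}'(L)=0$ and $dL/d\beta = 1/(2L)$), while the interior contribution, after one integration by parts using the eigenvalue equation and the differentiated normalization $\tfrac12 g_{n,\beta}(L)^2 + 2\int_0^L g_{n,\beta}\,\partial_\beta g_{n,\beta}\,\rho\,d\rho=0$, collapses to $-\tfrac12 \eta(n,\beta) g_{n,\beta}(L)^2$. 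Summing yields (i). Note that $g_{n,\beta}(L)>0$: otherwise $g_{n,\beta}(L)=g_{n,\beta}'(L)=0$ would force $g_{n,\beta}\equiv 0$ by ODE uniqueness. Part (ii) is then immediate: with $n=0$, $V(L) = \beta/4$, and the test function $f\equiv 1$ in the Rayleigh quotient of $H_{0,\beta}$ (as in the proof of Proposition~\ref{propSmall}) gives $\eta(0,\beta) \le \beta/8 < \beta/4$, so $\eta'(0,\beta)>0$.

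For part (iii), the central step is the comparison $\eta(n,\beta) > W(\beta)$ for $\beta \in (0, 2n]$. Indeed, if $L \le \sqrt{2n}$ then $V$ is strictly decreasing on $(0,L]$, so $V(\rho) \ge V(L)$ there with strict inequality for $\rho<L$, which forces
\[
\eta(n,\beta) \ge \int_0^L V(\rho)\,g_{n,\beta}(\rho)^2\,\rho\,d\rho > V(L) = W(\beta).
\]
Combined with (i), this excludes any critical point of $\beta\mapsto\eta(n,\beta)$ on $(0, 2n]$. On the other hand, as $\beta\to+\infty$, $V(L)\sim\beta/4\to+\infty$ while $\eta(n,\beta)\to 1$ (Proposition~\ref{propLarge}), so $\eta'(n,\beta)>0$ for $\beta$ large by (i); combined with $\eta'(n,\beta)<0$ on $(0,2n]$ and continuity, this yields at least one critical point $\beta^*>2n$. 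Differentiating (i) once more and using $V(L)-\eta=0$ at $\beta^*$ gives $\eta''(n,\beta^*) = \tfrac12 g_{n,\beta^*}(L)^2 W'(\beta^*) > 0$, so every critical point is a non-degenerate local minimum; two distinct such minima would require a local maximum between them, which is impossible. This gives uniqueness of $\beta_{\min}(n)$, non-degeneracy, and the bound $\beta_{\min}(n) > 2n$. The main difficulty I anticipate lies in the Rayleigh comparison $\eta(n,\beta) > W(\beta)$ for $\beta \le 2n$; once that is in place, the remainder of (iii) is a clean sign-change dichotomy and (i)--(ii) are routine.
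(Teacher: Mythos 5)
Your proof is correct, and it is worth noting how it relates to the paper's. The paper obtains (i) by quoting the boundary Feynman--Hellmann formula $\mu_n'(a)=a\,g(a)^2(q_n(a)-\mu_n(a))$ from Dauge--Helffer \cite[Eq. ($\mathrm{F}_\omega$)]{DH1} and then applying the chain rule in $a=\beta^{1/2}$, and it disposes of (ii) and (iii) entirely by citing \cite[Theorems 4.2 and 4.3]{DH1}, which give monotonicity and unimodality of the first eigenvalue as a function of the endpoint for Sturm--Liouville operators with a potential that is (respectively) increasing, or decreasing-then-increasing. You instead make the whole proposition self-contained: you rederive the boundary formula by Hadamard differentiation of the Rayleigh identity (your computation of the boundary and interior contributions, including the use of the differentiated normalization, is exactly right), and you reprove the relevant special case of the Dauge--Helffer theorems by exploiting the explicit structure of $V(\rho)=(n/\rho-\rho/2)^2$. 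Your three key steps --- the Rayleigh comparison $\eta(n,\beta)>V(\beta^{1/2})$ for $\beta\le 2n$ (which needs the positivity of the ground state to make the inequality strict, as you note), the sign of $V(\beta^{1/2})-\eta$ for large $\beta$, and the identity $\eta''(n,\beta^*)=\tfrac12 g_{n,\beta^*}(\beta_*^{1/2})^2\,W'(\beta^*)$ at critical points together with $W'>0$ on $(2n,\infty)$ --- are all sound, and the ``two nondegenerate minima force an intermediate critical maximum'' dichotomy closes the uniqueness argument. What the paper's route buys is brevity and reliance on a general theorem valid for a wide class of potentials; what yours buys is a transparent, quantitative argument that also makes visible where the threshold $2n$ comes from (it is exactly where $W'(\beta)=1/4-n^2/\beta^2$ changes sign). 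The only points you leave implicit --- smoothness in $\beta$ of $g_{n,\beta}$ and of its boundary value, needed to differentiate under the integral and to compute $\eta''$ --- follow from the simplicity of the first eigenvalue and analytic perturbation theory, which the paper also invokes.
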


To unify the notation, we set 
\begin{equation}\label{eq:n0} 
\beta_{\min}(0):=0\,.
\end{equation}

\begin{proof} Setting $a:=\beta^{1/2}$, $\eta(n,\beta)$ is the first eigenvalue of a Sturm-Liouville operator of the form
\[-\frac1{\rho}\,\frac{d}{d\rho}\left(\rho\,\frac{d}{d\rho}\right)+q_n(\rho),\]
in the weighted $L^2$-space $L^2((0,a),\rho\,d\rho)$, with Neumann boundary condition at $\rho=a$ and either Dirichlet (when $n>0$) or Neumann (when $n=0$) boundary condition at $\rho=0$. We write $\mu_n(a)=\eta(n,a^2)$. As in the reference \cite{DH1}, the right endpoint $a$ is variable while the potential 
\[q_n(\rho)=\left(\frac{n}{\rho}-\frac{\rho}{2}\right)^2\]
is independent of $a$.

According to a variant of the Feynman-Hellmann  formula \cite[Equation ($\rm{F}_\omega$) p. 248]{DH1}, for all $a>0$,
\[\mu'_n(a)=a \, g(a)^2 \left(q_n(a)-\mu_n(a)\right),\]
where $g=g_{n,a^2}=g_{n,\beta}$. Substituting for $\mu_n'(\beta^{1/2})$ in
\[{\eta'(n,\beta)}=\frac{d}{d\beta}\left(\mu_n(\beta^{1/2})\right)=\frac{1}{2\beta^{1/2}}\mu_n'(\beta^{1/2}),\]
we obtain Point (i). Points (ii) and (iii) are direct applications of \cite[Theorem 4.2]{DH1} and \cite[Theorem 4.3]{DH1}, respectively.
\end{proof}
As observed in  Fournais--Persson-Sundqvist  \cite{FPS}, 
  a  variant of the first Feynman-Hellmann  formula \eqref{eqDern} gives
  \[\lambda'(n,\beta)=\int_0^1\left(\frac{\beta}{2}r^2-n\right)f_{n,\beta}(r)^2\,r\,dr\]
  and therefore:
\begin{lemma}\label{lem:fps1}
If $\beta < 2n$, then $\lambda'(n,\beta) <0$.
\end{lemma}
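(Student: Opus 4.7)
The proof should be essentially immediate from the Feynman--Hellmann formula displayed just above the lemma statement, so my plan is little more than a sign check.

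First, I would take as the starting point the identity
\[\lambda'(n,\beta)=\int_0^1\left(\frac{\beta}{2}r^2-n\right)f_{n,\beta}(r)^2\,r\,dr,\]
which is the analogue of \eqref{eqDern} obtained by differentiating $\lambda(n,\beta)=\langle H_{n,\beta}f_{n,\beta},f_{n,\beta}\rangle$ in $\beta$: since
\[\partial_\beta H_{n,\beta}=2\cdot\frac{r}{2}\left(\frac{\beta}{2}r-\frac{n}{r}\right)=\frac{\beta}{2}r^2-n,\]
the Feynman--Hellmann theorem together with the normalization $\int_0^1 f_{n,\beta}^2\,r\,dr=1$ gives exactly the formula above. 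If the reader wants full detail, this short derivation can be inserted here.

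Next, I would examine the sign of the integrand pointwise. For $r\in[0,1]$ we have $r^2\le 1$, hence
\[\frac{\beta}{2}r^2-n\le\frac{\beta}{2}-n.\]
Under the hypothesis $\beta<2n$, the right-hand side is strictly negative, so the factor $\frac{\beta}{2}r^2-n$ is bounded above by a negative constant throughout $[0,1]$. Since $f_{n,\beta}$ is the (positive, normalized) ground state of $H_{n,\beta}$, the weight $f_{n,\beta}(r)^2\,r$ is non-negative on $(0,1)$ and strictly positive on a set of positive measure. Multiplying and integrating immediately yields
\[\lambda'(n,\beta)\le\left(\frac{\beta}{2}-n\right)\int_0^1 f_{n,\beta}(r)^2\,r\,dr=\frac{\beta}{2}-n<0,\]
which is the desired conclusion (and in fact a slightly sharper quantitative version).

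Because the argument is this transparent, I do not anticipate a genuine obstacle. The only mild point of care is that the Feynman--Hellmann identity is applied to the family $H_{n,\beta}$ in its original coordinate $r\in(0,1)$, rather than in the rescaled variable $\rho=\beta^{1/2}r$ used for \eqref{eqDern}; this is why the constant $-n$ (as opposed to $2n/\rho^2$) appears, and why the pointwise bound $r^2\le 1$ suffices to force the integrand to have a definite sign.
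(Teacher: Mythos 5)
Your proposal is correct and follows exactly the paper's route: the paper likewise derives $\lambda'(n,\beta)=\int_0^1\left(\frac{\beta}{2}r^2-n\right)f_{n,\beta}(r)^2\,r\,dr$ from the Feynman--Hellmann formula and the lemma then follows from the pointwise sign of the integrand. Your additional quantitative bound $\lambda'(n,\beta)\le\frac{\beta}{2}-n$ is a correct, mild refinement of the same argument.
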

It is also noted in \cite{FPS} that, using 
 the trial function $f(r)=r^n\exp(-\beta r^2/4)$ in the Rayleigh quotient, we get:
\begin{lemma}\label{lem:fps2}
If $\beta > 2n$, $\lambda(n,\beta) < \beta$\,.
\end{lemma}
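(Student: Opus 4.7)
The plan is to plug the suggested trial function $f(r) := r^n e^{-\beta r^2/4}$ into the Rayleigh quotient for $\lambda(n,\beta)$, namely
\[
\lambda(n,\beta) \;=\; \inf_g \frac{\int_0^1 \bigl(|g'(r)|^2 + V(r)\, g(r)^2\bigr)\, r\, dr}{\int_0^1 g(r)^2\, r\, dr}, \qquad V(r) := \Bigl(\frac{\beta}{2} r - \frac{n}{r}\Bigr)^2,
\]
with the infimum running over the natural form domain of $H_{n,\beta}$. The function $f$ is smooth on $[0,1]$, satisfies $f(0)=0$ when $n>0$, and has $f^2$ and $V f^2$ integrable against $r\, dr$, so it is admissible. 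Observe however that $f$ does not satisfy the Neumann condition at $r=1$ whenever $\beta \neq 2n$, and this very defect will be what produces the strict inequality.

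The whole computation hinges on the single algebraic identity
\[
f'(r) = \Bigl(\frac{n}{r} - \frac{\beta}{2} r\Bigr) f(r),
\]
from which two consequences follow at once: first, $|f'(r)|^2 = V(r)\, f(r)^2$ pointwise; and second, after differentiating once more and simplifying, the pointwise equation $-f''(r) - f'(r)/r + V(r) f(r) = \beta\, f(r)$ on $(0,1)$. Thus $f$ formally solves $H_{n,\beta} f = \beta f$, the only obstruction to it being a genuine eigenfunction being the boundary value $f'(1) = \bigl(n - \beta/2\bigr)\, f(1)$.

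Integration by parts on $(0,1)$ against the weight $r\, dr$ (the endpoint $r=0$ contributes nothing for any $n \geq 0$, since $r f'$ vanishes there) then yields
\[
\int_0^1 f\, (H_{n,\beta} f)\, r\, dr \;=\; \int_0^1 \bigl(|f'|^2 + V f^2\bigr) r\, dr \;-\; f(1)\, f'(1),
\]
and substituting $H_{n,\beta} f = \beta f$ produces
\[
\int_0^1 \bigl(|f'|^2 + V f^2\bigr) r\, dr \;=\; \beta\, \|f\|^2 \;+\; \Bigl(n - \frac{\beta}{2}\Bigr)\, f(1)^2.
\]
Under the hypothesis $\beta > 2n$ the correction is strictly negative, because $f(1) = e^{-\beta/4} > 0$. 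Dividing by $\|f\|^2$ and invoking the variational principle gives $\lambda(n,\beta) < \beta$, as claimed. No serious obstacle is expected; the only point requiring care is the sign of the boundary term in the integration by parts — which is precisely the mechanism delivering the strict inequality, and which also explains why the equality $\lambda(n,2n) = 2n$ holds at the degenerate threshold $\beta=2n$.
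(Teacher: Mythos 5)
Your proof is correct and follows exactly the route the paper indicates (after \cite{FPS}): inserting the trial function $f(r)=r^n e^{-\beta r^2/4}$ into the Rayleigh quotient, with the strict inequality coming from the boundary term $(n-\tfrac{\beta}{2})f(1)^2<0$ since the Neumann condition at $r=1$ is not in the form domain. The paper gives no further detail, so your computation simply supplies the verification it leaves implicit.
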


\subsection{Intersections}

 Let us prove a useful intermediate result towards the proof of  Theorems \ref{theorem:sj} and \ref{theorem:groundstate}.

\begin{proposition} \label{prop:inter} Let $n\in\mathbb N$. There exists at least one positive $\beta^*$ such that $\eta(n,\beta^*)=\eta(n+1,\beta^*)$. In addition, any $\beta$ such that $\eta(n,\beta)=\eta(n+1,\beta)$ necessarily satisfies $\beta>2n$.
\end{proposition}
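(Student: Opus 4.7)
The plan is to establish the two claims separately, with different ingredients.

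\emph{Existence of $\beta^*$.} I would compare $\eta(n,\beta)$ and $\eta(n+1,\beta)$ in the two asymptotic regimes already established in the paper. By Proposition \ref{propSmall}, as $\beta\to 0^+$: if $n\geq 1$, then $\eta(n,\beta)\sim (j_{n,1}')^2/\beta$ and $\eta(n+1,\beta)\sim (j_{n+1,1}')^2/\beta$, and the classical interlacing $j_{n,1}' < j_{n+1,1}'$ of the first positive zeros of the derivatives of Bessel functions yields $\eta(n,\beta) < \eta(n+1,\beta)$; if $n = 0$, then $\eta(0,\beta)\to 0$ while $\eta(1,\beta)\to +\infty$, so again $\eta(n,\beta) < \eta(n+1,\beta)$. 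On the other hand, Proposition \ref{propLarge} and \eqref{eq:3.7} give, as $\beta\to+\infty$,
\[ \eta(n,\beta)-\eta(n+1,\beta) = e^{-\beta/2}\,\frac{\beta^{n+1}}{2^n n!}\left(\frac{\beta}{2(n+1)}-1+o(1)\right), \]
which is strictly positive for $\beta$ large. Since $\beta\mapsto \eta(n,\beta)-\eta(n+1,\beta)$ is continuous (real-analytic) on $(0,+\infty)$, the intermediate value theorem supplies some $\beta^* > 0$ at which the difference vanishes.

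\emph{Lower bound $\beta > 2n$.} Here I would exploit the extension of $n\mapsto \eta(n,\beta)$ to real $n \geq 0$ described at the start of Section \ref{s3}, together with the Feynman--Hellmann formula \eqref{eqDern}. Suppose $\beta > 0$ satisfies $\eta(n,\beta)=\eta(n+1,\beta)$ and, for contradiction, that $\beta\leq 2n$ (in particular $n > 0$). For every $n'\in[n,n+1]$ and every $\rho\in(0,\beta^{1/2})$ one has $\rho^2 < \beta \leq 2n \leq 2n'$, hence $2n'/\rho^2 - 1 > 0$ throughout the open interval. Since $g_{n',\beta}$, being the positive ground state of a one-dimensional Sturm--Liouville operator, is strictly positive on $(0,\beta^{1/2})$, the integrand in \eqref{eqDern} is strictly positive there. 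Therefore $\partial_{n'}\eta(n',\beta) > 0$ for every $n'\in[n,n+1]$, and integrating in $n'$ from $n$ to $n+1$ yields $\eta(n+1,\beta) > \eta(n,\beta)$, a contradiction.

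\emph{Expected obstacle.} The most delicate technical point is controlling the Feynman--Hellmann integrand near $\rho = 0$, where the effective potential carries a $1/\rho^2$ singularity when $n' > 0$. The indicial analysis at this regular singular point gives $g_{n',\beta}(\rho) = O(\rho^{n'})$ as $\rho\to 0^+$ (the branch selected by the Dirichlet condition), so the integrand behaves like $\rho^{2n' - 1}$ and is integrable for $n' > 0$; I would also rely on the real-analyticity of $(n',\beta)\mapsto \eta(n',\beta)$ recorded just before \eqref{eqDern} to legitimize integrating $\partial_{n'}\eta$ in $n'$.
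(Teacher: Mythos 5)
Your proof is correct and follows essentially the same route as the paper: existence via the small-$\beta$ and large-$\beta$ asymptotics of Propositions \ref{propSmall} and \ref{propLarge} together with the intermediate value theorem, and the bound $\beta>2n$ by contraposition using the Feynman--Hellmann formula \eqref{eqDern} for the extended map $m\mapsto\eta(m,\beta)$. You merely make explicit a few details the paper leaves implicit (the interlacing $j_{n,1}'<j_{n+1,1}'$ and the integrability of the Feynman--Hellmann integrand near $\rho=0$), which is fine.
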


\begin{proof} Propositions \ref{propSmall} and \ref{propLarge} imply, respectively, that $\eta(n,\beta)<\eta(n+1,\beta)$ for $\beta>0$ small enough and $\eta(n,\beta)>\eta(n+1,\beta)$ for $\beta$ large enough. By continuity, there exists at least one $\beta^*>0$ such that $\eta(n,\beta^*)=\eta(n+1,\beta^*)$.

We prove the second part of the statement by contraposition. Let us use the extended definition of $(m,\beta)\mapsto\eta(m,\beta)$ where $m$ is not necessarily an integer. If $\beta>0$ and $m>0$ are such that $\beta\le 2m$, Formula \eqref{eqDern} shows that $\partial_{m}\eta(m,\beta)>0$. In particular, if $0<\beta\le 2n$, the map $m\mapsto \eta(m,\beta)$ is strictly increasing in $[n,n+1]$, and therefore $ \eta(n,\beta)<\eta(n+1,\beta)$.  
\end{proof}

It is not clear at this stage that there is no more than one point of intersection between the two curves.  We show this in the next two sections.

\subsection{Saint-James Formula}\label{s2}

\begin{figure}[htb]
  \includegraphics[width=\textwidth]{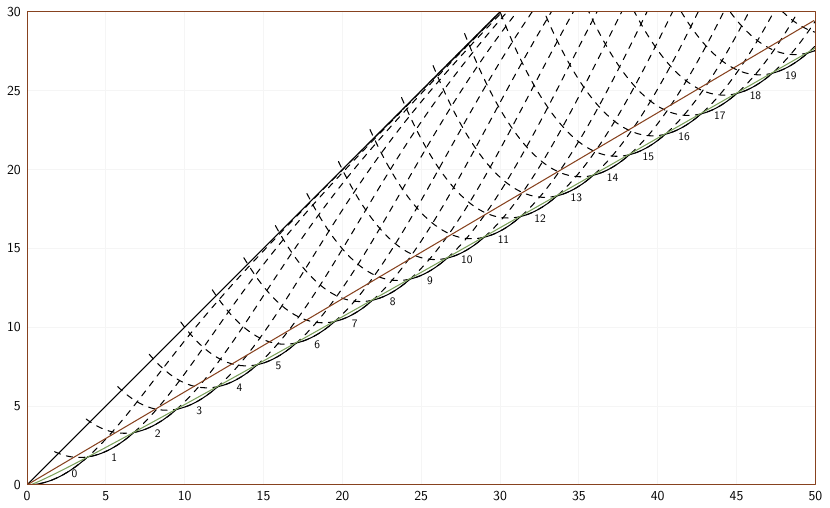}
  \caption{Saint-James picture recovered by M. Persson-Sundqvist}
  \label{fig:sj}
 \end{figure}

The goal of this section is to prove Theorem \ref{theorem:sj} (illustrated by Figure \ref{fig:sj}). We use a representation for the first eigenfunction of $G_{n,\beta}$ deduced from \eqref{eqFKummer} by the relation $$g_{n,\beta}(\rho)=\beta^{-1/2}f_{n,\beta}(\beta^{-1/2}\rho)\,,$$ that is
 \begin{equation}
\label{eq:gn}g_{n,\beta}(\rho)=K_n^\beta\rho^n e^{-\frac{\rho^2}4}M\left(\frac12-\frac{\eta}{2},n+1,\frac{ \rho^2}{2}\right)
\end{equation}
(with $\eta=\eta(n,\beta)$ and $K_n^\beta=\beta^{-(n+1)/2}C_n^\beta$, where $C_n^\beta$ is the normalization constant in Equation \eqref{eqFKummer}). \\
We obtain
\begin{equation*}
	 (g_{n,\beta})'(\rho)=\left(\frac{n}{\rho}-\frac{\rho}{2}\right)g_{n,\beta}(\rho)+ K_n^\beta\,\rho^{n+1} e^{-\frac{\rho^2}{4}}\partial_z M\left(\frac12-\frac{\eta}{2},n+1,\frac{\rho^2}{2}\right).	
\end{equation*}
Recalling the general formula for the derivative of Kummer functions \cite[\href{https://dlmf.nist.gov/13.3.E15}{(13.3.4)}]{DLMF}:
\begin{equation*}
\partial_z M(a,b,z)=\frac{a}{b}M(a+1,b+1,z),
\end{equation*}
we finally get
\begin{equation*}
(g_{n,\beta})'(\rho)=\left(\frac{n}{\rho}-\frac{ \rho}{2}\right)g_{n,\beta}(\rho)+ K_n^\beta\,\frac{\frac{1}{2}-\frac{\eta}{2}}{n+1} \rho^{n+1} e^{-\frac{\beta \rho^2}{4}} M\left(\frac32-\frac{\eta}{2},n+2,\frac{ \rho^2}{2}\right)
\end{equation*}
The Neumann condition at $\beta^{1/2}$ (i.e. $(g_{n,\beta})'(\beta^{1/2})=0$) leads, after simplifications, to the implicit equation for the eigenvalues $\eta$:
\begin{equation*}	
	(n+1)(n-x)M(\nu,n+1,x)+2x\nu M(\nu+1,n+2,x)=0,
\end{equation*}
with the notation
\begin{align*}
	\nu&:=\frac{1}{2}(1-\eta),\\
	x&:=\frac{\beta}{2}.
\end{align*}

 Let $(\beta,\eta)$ be a point of intersection of the analytic curves $\beta\mapsto\eta(n,\beta)$ and $\beta\mapsto\eta(n+1,\beta)$.
We first note that, according to Proposition \ref{prop:inter}, we have $\beta>2n$. As a consequence, according to Lemma~\ref{lem:fps2}, $\lambda(n,\beta)<\beta$, so that $0<\eta<1$.  The corresponding $(x,\nu)$ satisfies the system of equations
\begin{align}
	\label{eqSys1}(n+1)(n-x)M(\nu,n+1,x)+2x\nu M(\nu+1,n+2,x)&=0\, ,\\
	\label{eqSys2}(n+2)(n+1-x)M(\nu,n+2,x)+2x\nu M(\nu+1,n+3,x)&=0\,
\end{align}
 (we have $\nu=\frac12(1-\eta)\in(0,\frac12)$, so the Kummer functions are well-defined).
\begin{remark} \label{rem:nonzero}  Since $(g_{n,\beta})'(\beta^{1/2})=0$ and since $g_{n,\beta}$ is a non-zero solution of a second-order differential equation, we necessarily have $g_{n,\beta}(\beta^{1/2})\neq0$, or equivalently $M(\nu,n+1,x)\neq0$.
\end{remark}

We proceed in the following way to obtain an algebraic equation satisfied by $x$ and $\nu$.
\begin{itemize}
	\item We use the known recursion relations for the functions $M(a,b,z)$ (see for instance \cite[Section 13.3]{DLMF}) to express the left-hand side in the system as a linear combination of 
	\begin{align*}	
		X&:=M(\nu,n+1,x),\\
		Y&:=M(\nu,n+2,x).
	\end{align*}
	\item We write that the determinant of the resulting system is zero (since $(X,Y)\neq(0,0)$, according to Remark \ref{rem:nonzero})  and we obtain the desired equation.
\end{itemize}
From the recursion relation \cite[\href{https://dlmf.nist.gov/13.3.E4}{(13.3.4)}]{DLMF},
\begin{equation}\label{eqRec1}
	xM(\nu+1,n+3,x)=(n+2)M(\nu+1,n+2,x)-(n+2)M(\nu,n+2,x)
\end{equation}
and from \cite[\href{https://dlmf.nist.gov/13.3.E3}{(13.3.3)}]{DLMF}
\begin{equation}\label{eqRec2}
	\nu M(\nu+1,n+2,x)=(n+1)M(\nu,n+1,x)+(\nu-n-1)M(\nu,n+2,x).
\end{equation}
Inserting Equation \eqref{eqRec2} in Equation \eqref{eqSys1}, we get
\begin{equation}\label{eqXY1}
	(n+1)(n+x)X+2x(\nu-n-1)Y=0.
\end{equation}
Inserting Equation \eqref{eqRec1} in Equation \eqref{eqSys2}, we get
\begin{equation*}
	2\nu M(\nu+1,n+2,x)+(n+1-2\nu-x)Y=0.
\end{equation*}
Inserting Equation \eqref{eqRec2} in the previous equation, we obtain
\begin{equation}\label{eqXY2}
	2(n+1)X-(n+1+x)Y=0.
\end{equation}
The system formed by Equations \eqref{eqXY1} and \eqref{eqXY2} is singular if and only if
\begin{equation*}
\left|
\begin{array}{cc}
	(n+1)(n+x)&2x(\nu-n-1)\\
	2(n+1)&-(n+1+x)
\end{array}
\right|=0.
\end{equation*}
This gives the equation 
\begin{equation}\label{eqSingularNu}
	x^2+(4\nu-2n-3)x+n(n+1)=0,
\end{equation}
which can be rewritten
\begin{equation}\label{eqSingularEta}
	x^2-(2\eta+2n+1)x+n(n+1)=0.
\end{equation}
We revert to the variable $\beta$ and we obtain 
\begin{equation}\label{eq:sj1}
	\beta^2-2(2\eta+2n+1)\beta+4n(n+1)=0 \,.
\end{equation}
A similar proof  of the same formula is given in \cite{KLPS}, using Whittaker functions rather than Kummer functions.

Solving for $\beta$  and using again the necessary condition $\beta>2n$, we finally  obtain Saint-James Formula\footnote{Saint-James  \cite{S-J} gives this formula in a footnote, writing $h$ for $\beta$ and $\lambda$ for $\eta$, with the following justification ``En utilisant les propri\'et\'es des fonctions hyperg\'eom\'etriques confluentes". He probably had to prove first \eqref{eq:sj1} but this equation does not appear explicitly in \cite{S-J}.}:
\begin{equation}\label{eq:sj1aa}
\beta=	 2\eta+2n+1 + \sqrt{ (2\eta+1)^2 + 8 \eta n}\,.
\end{equation}

In addition, since $\eta> 0$, Equation \eqref{eq:sj1aa} immediately implies \[\beta>2(n+1).\] This completes the proof of Theorem \ref{theorem:sj}.  

\begin{remark}
We can rewrite \eqref{eq:sj1} in the form
$$
(\beta- (2n+1))^2= 4 \lambda +1\,,
$$
which holds at the crossing point of $\beta\mapsto\eta(n,\beta)$ and $\beta\mapsto\eta(n+1,\beta)$, with $\lambda=\beta \eta$.\\
We then get an alternative form of Saint-James formula: 
\begin{equation}\label{eq:sj1a}
\beta= (2n+1) + \sqrt{4\,\lambda +1}\,.
\end{equation}
\end{remark}

\subsection{Interlacing between intersections and minima}
\label{s:interlacing}
We study in this section  the relation between the points of intersection and the minima $\beta_{\min}(n)$. Our results give in particular a proof of Theorem~\ref{theorem:groundstate}.  The derivatives are again taken with respect to $\beta$.

\begin{proposition} \label{prop:interlacing} Let $n\in\mathbb N$ and let $\beta^*$ be a solution in $(0,\infty)$ of the equation
\[\eta(n,\beta)=\eta(n+1,\beta).\]
Then,
\[\eta'(n,\beta^{*})>0\]
and
\[\eta'(n+1,\beta^{*})<0.\]
\end{proposition}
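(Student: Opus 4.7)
The plan is to combine the Feynman--Hellmann style formula from Proposition~\ref{prop:variation}(i) with the quadratic form \eqref{eq:sj1} of the Saint-James identity. Setting
\[
A_m(\beta):=\left(\frac{m}{\beta^{1/2}}-\frac{\beta^{1/2}}{2}\right)^2=\frac{(\beta-2m)^2}{4\beta},
\]
Proposition~\ref{prop:variation}(i) reads
\[
\eta'(m,\beta)=\tfrac12\,g_{m,\beta}(\beta^{1/2})^2\bigl(A_m(\beta)-\eta(m,\beta)\bigr).
\]
By Remark~\ref{rem:nonzero}, $g_{m,\beta^{*}}(\beta^{*1/2})\neq 0$ for $m=n$ and $m=n+1$ (each is a nonzero solution of a second order ODE whose first derivative vanishes at $\beta^{*1/2}$). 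Hence the sign of $\eta'(n,\beta^{*})$ is the sign of $A_n(\beta^{*})-\eta^{*}$, and similarly for $n+1$. So the problem reduces to the two algebraic inequalities
\[
A_n(\beta^{*})>\eta^{*}\qquad\text{and}\qquad A_{n+1}(\beta^{*})<\eta^{*}.
\]

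Next I will feed the Saint-James relation \eqref{eq:sj1}, namely
\[
\beta^{*2}=(4\eta^{*}+4n+2)\beta^{*}-4n(n+1),
\]
into each of these inequalities. Expanding $(\beta^{*}-2n)^2-4\beta^{*}\eta^{*}$, using this identity to eliminate $\beta^{*2}$, and collecting terms, I expect everything to telescope down to
\[
(\beta^{*}-2n)^2-4\beta^{*}\eta^{*}=2\beta^{*}-4n.
\]
Doing the same computation for $m=n+1$ should yield
\[
(\beta^{*}-2(n+1))^2-4\beta^{*}\eta^{*}=-2\beta^{*}+4(n+1).
\]
(Both are quick one-line algebraic checks, since the quadratic term $\beta^{*2}$ cancels against the substitution.) Combined with the last assertion of Theorem~\ref{theorem:sj}, which gives $\beta^{*}>2(n+1)>2n$, the first identity is strictly positive and the second strictly negative, which is exactly what we want.

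There is no genuine obstacle; the only delicate point is making sure the positivity of the prefactor $g_{n,\beta^{*}}(\beta^{*1/2})^2$ is invoked correctly, so that the sign of the full derivative coincides with the sign of the bracketed term. Once this is noted, the whole argument is a two-step substitution: first convert $\eta'(m,\beta^{*})$ into the sign of $A_m(\beta^{*})-\eta^{*}$ via Proposition~\ref{prop:variation}(i), then eliminate $\beta^{*2}$ via \eqref{eq:sj1}, and apply $\beta^{*}>2(n+1)$ from Theorem~\ref{theorem:sj}.
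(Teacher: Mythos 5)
Your proposal is correct and follows essentially the same route as the paper: the authors likewise combine Proposition~\ref{prop:variation}(i) with the Saint-James quadratic relation \eqref{eqSingularEta} (working in the variable $x^*=\beta^*/2$, where the two numerators reduce to $x^*-n$ and $n+1-x^*$, matching your identities $2\beta^*-4n$ and $-2\beta^*+4(n+1)$ up to positive factors) and conclude from $\beta^*>2(n+1)$ in Theorem~\ref{theorem:sj}. Your algebraic checks are right, and the appeal to Remark~\ref{rem:nonzero} for the non-vanishing of the prefactor is exactly the point the paper also uses.
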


\begin{proof} We use the notation $\eta^*_n=\eta(n,\beta^*)=\eta(n+1,\beta^*)$ and $x^*:=\beta^*/2$ to make the formulas more readable. A direct computation, using Point (i) of Proposition \ref{prop:variation}, gives 
\[{ \eta'(n,\beta^*)}=c_n\frac{(x^*)^2-(2\eta^*_n+2n)x^*+n^2}{2x^*}\]
and
\[{ \eta'(n+1,\beta^*)}=c_{n+1}\frac{(x^*)^2-(2\eta^*_n+2n+2)x^*+(n+1)^2}{2x^*},\]
with $c_n$ and $c_{n+1}$ positive factors. Taking into account that $n$, $x^*$ and $\eta^*$ satisfy Equation \eqref{eqSingularEta}, we obtain
\[{ \eta'(n,\beta^*)}=c_n\frac{x^*-n}{2x^*}\]
and
\[{ \eta'(n+1,\beta^*)}=c_{n+1}\frac{n+1-x^*}{2x^*}.\]
According to Theorem  \ref{theorem:sj}, $x^*>n+1$, and therefore  $\eta'(n,\beta^*)>0$ and $\eta'(n+1,\beta^*)<0$.
\end{proof}

The following theorem is now easy to prove, and implies Theorem \ref{theorem:groundstate}. We use the convention $\beta_{\min}(0)=0$.

\begin{theorem} \label{theorem:interlacing}
For any $n\in \mathbb N$, there exists a unique solution $\beta_n $ in $[0,+\infty)$ of the equation 
	\[\lambda(n,\beta)=\lambda(n+1,\beta).\]
In addition,
\begin{enumerate}[(i)] 
\item the sequences $(\beta_{\min}(n))$ and $(\beta_n )$ are both increasing,
\item for any $n\in\mathbb N$, $\beta_{\min}(n)<\beta_n <\beta_{\min}(n+1)$,
\item $\lambda(\beta)=\lambda(0,\beta)$ when $\beta\in[0, \beta_0]$ and, for $n\in\mathbb N\setminus\{0\}$, $\lambda(\beta)=\lambda(n,\beta)$ when $\beta\in[\beta_{n-1},\beta_n ]$. Furthermore, if $\beta\notin\{\beta_n \,:\,n\in \mathbb N\}$, there is a unique $k\in\mathbb N$ such that $\lambda(\beta)=\lambda(k,\beta)$. 
\end{enumerate}

\end{theorem}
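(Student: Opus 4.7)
The plan is to establish existence, uniqueness, and location of $\beta_n$ in four stages, using only the ingredients already in hand: Proposition \ref{prop:inter} (existence of a positive crossing of $\eta(n,\cdot)$ with $\eta(n+1,\cdot)$), Proposition \ref{prop:variation}(iii) (unique non-degenerate minimum $\beta_{\min}(m)$ of $\eta(m,\cdot)$), and Proposition \ref{prop:interlacing} (signs of $\eta'(n,\cdot)$ and $\eta'(n+1,\cdot)$ at any crossing). The key object throughout is $\phi_n(\beta):=\eta(n,\beta)-\eta(n+1,\beta)$; for $\beta>0$, $\phi_n(\beta)=0$ is equivalent to $\lambda(n,\beta)=\lambda(n+1,\beta)$.

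For existence and uniqueness, Proposition \ref{prop:inter} already produces at least one positive zero of $\phi_n$; moreover $\beta=0$ cannot be a solution since $\lambda(0,0)=0$ while $\lambda(k,0)=(j_{k,1}')^2>0$ for $k\ge 1$. Suppose there were two zeros $\beta^*_1<\beta^*_2$. By Proposition \ref{prop:interlacing}, $\phi_n'(\beta^*_1)>0$ and $\phi_n'(\beta^*_2)>0$, so $\phi_n>0$ just to the right of $\beta^*_1$ and $\phi_n<0$ just to the left of $\beta^*_2$. Continuity then forces an intermediate zero $\beta^*_3\in(\beta^*_1,\beta^*_2)$ at which $\phi_n$ changes sign from positive to negative, whence $\phi_n'(\beta^*_3)\le 0$, contradicting Proposition \ref{prop:interlacing} at $\beta^*_3$. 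This gives the unique $\beta_n$ and proves the first assertion of the theorem.

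For (ii), I apply Proposition \ref{prop:interlacing} at $\beta_n$: the inequality $\eta'(n,\beta_n)>0$, combined with the fact (from Proposition \ref{prop:variation}(iii)) that $\eta(n,\cdot)$ is strictly decreasing on $(0,\beta_{\min}(n))$ and strictly increasing on $(\beta_{\min}(n),+\infty)$, forces $\beta_n>\beta_{\min}(n)$ (for $n=0$ this is automatic under the convention $\beta_{\min}(0)=0$). Symmetrically, $\eta'(n+1,\beta_n)<0$ forces $\beta_n<\beta_{\min}(n+1)$. Chaining $\beta_{\min}(n)<\beta_n<\beta_{\min}(n+1)<\beta_{n+1}$ immediately gives (i).

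For (iii), the uniqueness of the zero of $\phi_n$, together with the sign information underlying Proposition \ref{prop:inter} (Propositions \ref{propSmall} and \ref{propLarge} give $\phi_n<0$ for small $\beta$ and $\phi_n>0$ for large $\beta$), implies $\phi_n<0$ on $(0,\beta_n)$ and $\phi_n>0$ on $(\beta_n,+\infty)$; equivalently, $\eta(m,\beta)<\eta(m+1,\beta)$ if and only if $\beta<\beta_m$. Fix $n\ge 1$ and $\beta\in(\beta_{n-1},\beta_n)$. For any $k>n$, by (i) we have $\beta<\beta_n<\beta_{n+1}<\cdots<\beta_{k-1}$, hence $\eta(n,\beta)<\eta(n+1,\beta)<\cdots<\eta(k,\beta)$; for any $k<n$, symmetrically, $\beta>\beta_{n-1}>\cdots>\beta_k$, hence $\eta(n,\beta)<\eta(n-1,\beta)<\cdots<\eta(k,\beta)$. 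Thus $k=n$ is the unique minimizer of $\lambda(\cdot,\beta)$ and $\lambda(\beta)=\lambda(n,\beta)$ on $(\beta_{n-1},\beta_n)$; the interval $[0,\beta_0)$ is treated identically with no downward iteration, and at $\beta=\beta_n$ exactly the two indices $n$ and $n+1$ realize the infimum. The main obstacle is really the uniqueness argument in the second stage; once Proposition \ref{prop:interlacing} is invoked, it collapses to a single application of the intermediate value theorem, and everything else is a routine chaining of the interlacing inequalities with the monotonicity of $(\beta_n)$.
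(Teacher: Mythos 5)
Your proof is correct and follows essentially the same route as the paper: both rest on Proposition \ref{prop:interlacing} to locate every crossing in $(\beta_{\min}(n),\beta_{\min}(n+1))$, deduce uniqueness, and then chain the sign of $\eta(n,\cdot)-\eta(n+1,\cdot)$ across the increasing sequence $(\beta_n)$ to get Point (iii). The only (harmless) variation is in the uniqueness step, where you argue by transversality of all zeros (every crossing goes from negative to positive, so two zeros would force an intermediate zero with nonpositive derivative), whereas the paper notes that on $(\beta_{\min}(n),\beta_{\min}(n+1))$ the function $\eta(n,\cdot)$ is strictly increasing and $\eta(n+1,\cdot)$ strictly decreasing, so their difference is strictly monotone there.
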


\begin{proof}
Let $\beta^*$ be a solution of
\[\eta(n,\beta)=\eta(n+1,\beta).\]
The signs of the derivatives established in Proposition \ref{prop:interlacing} imply that 
\[\beta_{\min}(n)<\beta^*<\beta_{\min}(n+1)\] 
(the case $n=0$ is included using the convention that $\beta_{\min}(0)=0$). Let us note that, since at least one solution exists according to Proposition \ref{prop:inter}, we have shown $\beta_{\min}(n)<\beta_{\min}(n+1)$.

By the same argument, any other solution must also belong to the interval \[(\beta_{\min}(n),\beta_{\min}(n+1)).\] Since $\beta\mapsto\eta(n,\beta)$ is strictly increasing and $\beta\mapsto \eta(n+1,\beta)$ strictly decreasing in the interval $(\beta_{\min}(n),\beta_{\min}(n+1))$, there cannot be another solution. We now use the notation $\beta_n :=\beta^*$ and we note that we have proved Point (ii). Since the sequence $(\beta_{\min}(n))$ is increasing, so is the sequence $(\beta_n )$, which proves Point (i).

To prove Point (iii), we remark that multiplying the values $\eta(n,\beta)$ by $\beta$ does not change their ordering, so that $\lambda(n,\beta)<\lambda(n+1,\beta)$ for $\beta<\beta_n $ and $\lambda(n,\beta)>\lambda(n+1,\beta)$ for $\beta>\beta_n $. Combining this with the fact that the sequence $(\beta_n )$ is increasing, we first deduce that, for $\beta\in(0, \beta_0)$,
\[\lambda(0,\beta)<\lambda(1,\beta)<\lambda(2,\beta)<\dots\,,\]
and in particular $\lambda(\beta)=\lambda(\beta,0)$. Then, for any $n\in\mathbb N\setminus\{0\}$ and $\beta\in(\beta_{n-1},\beta_n )$, we find
\[\lambda(n,\beta)<\lambda(n+1,\beta)<\lambda(n+2,\beta)<\dots\]
and 
\[\lambda(0,\beta)>\lambda(1,\beta)>\dots>\lambda(n-1,\beta)>\lambda(n,\beta).\]
In particular, $\lambda(\beta)=\lambda(n,\beta)$. This establishes Point (iii) when $\beta$ is not a point of intersection. The full statement follows by continuity.
\end{proof}

 Combining Point (iii) of the previous theorem with the inequality $\beta_{n-1}>2n$ (deduced from Saint-James formula \eqref{eq:sj1aa}) and with Lemma \ref{lem:fps2}, we obtain, for all $\beta>0$, $$\lambda(\beta) < \beta\,.$$ This inequality is proved for a general domain in $\mathbb R^2$ by Colbois-L\'ena-Provenzano-Savo \cite[Theorem 2.1]{CLPS1}.

 Let us now recall the definition 
 $$
  \eta_n^*:= \eta(n,\beta_n )=\eta(n+1,\beta_n )
 $$
 appearing in  the statement of Conjecture \ref{conj:monotone}.
 
 Since $\eta(\beta)=\eta(n+1,\beta)$ for $\beta\in[\beta_n ,\beta_{n+1}]$ and since $\beta\mapsto\eta(n+1,\beta)$ is decreasing in $[\beta_n ,\beta_{\min}(n+1)]$ and increasing in $[\beta_{\min}(n+1),\beta_{n+1}]$, we have
 \begin{equation}\label{eq:bound-eta-n}
 	\eta(n+1,\beta)\le\max\{\eta_n^*,\eta_{n+1}^*\}\mbox{ for all }\beta\in[\beta_n ,\beta_{n+1}]\,.
 \end{equation}
 In addition,
  \begin{equation}\label{eq:bound-eta-0}
 	\eta(\beta)\le\eta_{0}^*\mbox{ for all }\beta\in(0,\beta^*_{0}]\,,
 \end{equation}
 since $\eta(\beta)=\eta(0,\beta)$ and $\beta\mapsto \eta(0,\beta)$ is strictly increasing in $(0,\beta_0^*]$.
 
Conjecture \ref{conjUpperBound} is therefore equivalent to 
\begin{equation}	\label{eq:bound-eta}
	\eta^*_n<\Theta_0\mbox{ for all }n\in \mathbb N.
\end{equation}
From known asymptotic results  \eqref{eq:C1ab}, $\eta_n^*$ tends to $ \Theta_0$. Therefore, it would be enough to prove that the sequence $(\eta^*_n)$ is strictly increasing to obtain inequality \eqref{eq:bound-eta}. This shows that Conjecture \ref{conj:sequence} implies Conjecture \ref{conjUpperBound}.\\

  Notice also that we can deduce from \eqref{eq:sj1a}
 that if Conjecture \ref{conj:monotone}  holds, i.e. if  $\beta\mapsto\lambda(\beta)$ is increasing, then 
 \begin{equation}\label{eq:5.5}
 \beta_{n+1} \geq 2+ \beta_n \,.
 \end{equation}
In particular, this holds for $n$ large enough, using the Fournais-Helffer \cite{FH1} monotonicity  result, which will be recalled in Section \ref{s8}  and analyzed in more detail. This will also be  confirmed numerically, for $n\leq 400$.

\section{Asymptotic results for the disk revisited}\label{s8}
Starting from the asymptotic analysis given in \cite{FH1} we show that these asymptotics can be refined at the intersection points i.e. for the sequence $\beta_n $ as $n\rightarrow +\infty$. We then show how the comparison with the information coming from Saint-James formula leads to complete asymptotics of $\beta_n $ and $\lambda(\beta_n )=\lambda(n,\beta_n )$. This validates asymptotically a conjecture on the monotonicity of $\beta_{n+1}-\beta_n $. 
 \subsection{The case of the disk (reminder after \cite{FH1})}\label{disc}~\\
We recall that it follows of the proof of Theorem 2.5  in \cite{FH1}:
 \begin{theorem}[Eigenvalue asymptotics for the disk]
\label{theorem:disc}~\\
Suppose that $\mathbb D$ is the unit disk and define $\delta(m,\beta)$, for $m\in {\mathbb Z}$, $\beta>0$, by
\begin{align}
\label{eq:delta}
\delta(m,\beta) = m - \tfrac{\beta}{2} - \xi_0 \sqrt{\beta}\,.
\end{align}
Then there exist (computable) constants $C_0, \delta_0 \in {\mathbb R}$ such that if
\begin{align}
\Delta_\beta = \inf_{m \in {\mathbb Z}} | \delta(m,\beta) - \delta_0|\;,
\end{align}
then
\begin{align}\label{eq:asympFoHe}
\lambda(\beta) = \Theta_0 \beta - C_1 \sqrt{\beta} +
3C_1 \Theta_0^{1/2} \big( \Delta_\beta^2 + C_0\big) + {\mathcal O}(\beta^{-\frac{1}{2}})\;.
\end{align}
\end{theorem}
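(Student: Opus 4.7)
The plan is to exploit the fiber decomposition $\lambda(\beta) = \inf_{n \in \mathbb Z} \lambda(n,\beta)$ and reduce each one-dimensional problem to a perturbation of the De Gennes operator $\mathfrak h_0(\xi)$ from \eqref{eq:defh0}. First I would extend $n$ to a continuous parameter $\tau \in \mathbb R$ and, for each $\tau$, analyze $\lambda(\tau,\beta)$ by a boundary-layer argument concentrated near $r=1$, where ground states localize exponentially (Agmon-type estimates, justified by the fact that the effective potential $(\tau/r - \beta r/2)^2$ has its unique minimum on $(0,1]$ at the right endpoint as soon as $\beta > 2\tau$). After the change of variable $t = \sqrt{\beta}(1-r)$ and division by $\beta$, the operator $H_{\tau,\beta}$ becomes, up to explicit lower-order terms in powers of $\beta^{-1/2}$, the Neumann realization on $\mathbb R^+$ of $D_t^2 + (t+\xi)^2$, where $\xi := \tau\beta^{-1/2} - \beta^{1/2}/2$. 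At leading order this yields $\lambda(\tau,\beta) \sim \beta\, \lambda^{DG}(\xi)$, minimized over $\xi$ at $\xi_0$ with value $\Theta_0$.

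To obtain the $-C_1\sqrt{\beta}$ correction I would expand to the next order both the transport coefficient $-r^{-1}d/dr$ and the potential $(\tau/r - \beta r/2)^2$ in the new variable $t$, treating the $\beta^{-1/2}$-perturbation by Rayleigh--Schr\"odinger. Evaluating at $\xi = \xi_0$ produces a contribution linear in $\sqrt{\beta}$ involving the boundary value of the De Gennes ground state, which reproduces the universal constant $C_1$ already identified by Bernoff--Sternberg and Helffer--Morame for domains with curved boundary (here the boundary curvature equals $1$).

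The $O(1)$ term is the delicate point. Writing $\xi = \xi_0 + \sigma/\sqrt{\beta}$ and exploiting the non-degenerate minimum $\lambda^{DG}(\xi) = \Theta_0 + \tfrac12 \mu_2 (\xi-\xi_0)^2 + O((\xi-\xi_0)^3)$ with $\mu_2=(\lambda^{DG})''(\xi_0)>0$, one finds $\sigma = \tau - \beta/2 - \xi_0\sqrt{\beta}$, which for integer $\tau=m$ is exactly $\delta(m,\beta)$ from \eqref{eq:delta}. The continuous minimization in $\sigma$ singles out an optimal value $\sigma^* = \delta_0$ (determined by matching the subleading curvature and second-order perturbative corrections), so that the best admissible integer produces a residual quadratic loss proportional to $(\delta(m,\beta)-\delta_0)^2$. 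Taking the infimum over $m \in \mathbb Z$ yields precisely $\Delta_\beta^2$; computing the coefficient via the perturbative expansion gives $3C_1\Theta_0^{1/2}$, while $C_0$ collects the $m$-independent second-order constants.

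The main obstacle is to turn this formal expansion into matching two-sided bounds uniform in $n$. For the upper bound I would use quasi-modes of the form $\chi(r)\,u_{\xi}\!\bigl(\sqrt{\beta}(1-r)\bigr) e^{in\theta}$, with $\chi$ a cut-off away from the boundary and $u_\xi$ the De Gennes eigenfunction at the relevant $\xi$, and plug them into the Rayleigh quotient. The lower bound combines Agmon decay with an IMS partition of unity near $\partial \mathbb D$, the spectral gap of $\mathfrak h_0(\xi_0)$, and careful tracking of the $n$-dependence so that the infimum over $n$ commutes, up to $\mathcal O(\beta^{-1/2})$, with the boundary approximation. This uniform control of the error \emph{as a function of $n$}, so that the sharply oscillating $\Delta_\beta$-contribution survives in the asymptotics, is the technical heart of the argument in \cite{FH1}.
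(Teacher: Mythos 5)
Your proposal follows essentially the same route as the paper, which itself recalls and then revisits the Fournais--Helffer proof: Agmon localization to a boundary layer, reduction after the scaling $t=\sqrt{\beta}(1-r)$ and Fourier decomposition to a one-parameter perturbation of the De Gennes operator, a two-term Rayleigh--Schr\"odinger expansion producing the $-C_1\sqrt{\beta}$ term and the quadratic dependence $3C_1\Theta_0^{1/2}\big((\delta-\delta_0)^2+C_0\big)$, together with uniformity in $m$ and an a priori lower bound for large $|\delta|$ so that the infimum over $m$ yields $\Delta_\beta^2$. One minor slip: when $\beta>2\tau$ the effective potential $(\tau/r-\beta r/2)^2$ vanishes at the \emph{interior} point $r=\sqrt{2\tau/\beta}$ (it minimizes at $r=1$ precisely when $\beta<2\tau$); for the relevant fibers this zero lies within $O(\beta^{-1/2})$ of the boundary, so your localization conclusion stands, but the stated justification has the inequality reversed.
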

Here, with \eqref{eq:C1aa} in mind, we  recall that
 $\xi_0 < 0$ is the point where $\xi \mapsto \lambda^{DG} (\xi)$ attains its unique (non degenerate) minimum and that  $C_1$ is given by
\begin{align}\label{eq:C1}
C_1 = \frac{u_0^2(0)}{3}\;.
\end{align}
where  $u_0=\phi_{\xi_0}$, is  the positive, normalized eigenfunction associated with $\lambda^{DG} (\xi_0)$.\\
Numerically  (see \cite{Bo1}, \cite{FH1} or \cite{MPS})
$$
C_1 \sim 0.254\,,\, \xi_0 \sim -0.768 \,.
$$
We also recall that
 \begin{equation}\label{eq:C1a}
 \Theta_0 =\xi_0^2\,.
 \end{equation}

Although not used here, notice that the $3D$-case was analyzed by Helffer-Morame \cite{HM04} (see  also \cite[Chap. 9]{FH4}) and, for the version with three terms, in \cite{FP}.

\subsection{Theorem \ref{theorem:disc} revisited}\label{subsec:disc}~\\
We now revisit  the proof of Theorem~\ref{theorem:disc} as given in \cite{FH1} (see their Appendix C and  also \cite{FP}) in order to improve its conclusions  when considering $\beta=\beta_n $.

We  start  with the Neumann operator
 $$\mathfrak h_0(\zeta) = -\frac{d^2}{d\tau^2} + (\tau + \zeta)^2$$ on $L^2({\mathbb R}_{+},d\tau)\;.$ 

Let $D(t) = \{ x \in {\mathbb R}^2 \,| \, |x| \leq t\}$ be the disc with radius $t$.
Let $\widetilde{Q}_\beta$ be the quadratic form
$$
\widetilde{Q}_\beta[u] = \int_{D(1) \setminus D(\frac{1}{2})} \big|(-i\nabla - \beta {\bf A})u\big|^2\,dx\;,
$$
with domain $\{u \in H^1(D(1) \setminus D(\frac{1}{2})) \,| \, u(x) = 0 \text{ on } |x|=\frac{1}{2} \}$.
Let $\tilde{\lambda}(\beta)$ be the lowest eigenvalue of the corresponding self-adjoint operator (Friedrichs extension).
Using the Agmon estimates in the normal direction (see Theorem 4.1 in \cite{FH1}), it can be proved 
\begin{align}
\label{eq:trekant}
\lambda (\beta) = \tilde{\lambda}(\beta) + {\mathcal O}(\beta^{-\infty})\;.
\end{align}

By changing to boundary coordinates (if $(r,\theta)$ are usual polar coordinates, then $t=1-r$, $s=\theta$), the quadratic form $\widetilde{Q}_\beta[u]$ becomes,
\begin{align}
\widetilde{Q}_\beta[u] &= \int_0^{2\pi} ds \int_0^{1/2} dt \,(1-t)^{-1} | (D_s - \beta \tilde{A}_1)u|^2+
(1-t) |D_t u|^2\;, \\
\| u \|_{L^2}^2 &=  \int_0^{2\pi} ds \int_0^{1/2} dt \,(1-t) |u|^2\;,
\quad \tilde{A}_1 = \tfrac{1}{2} - t + \tfrac{t^2}{2}\;.\nonumber
\end{align}
Performing the scaling $\tau = \sqrt{\beta} \, t$ and decomposing in Fourier modes, 
$$
u = \sum_m e^{im s} \phi_m(t)\;,
$$ 
leads to 
\begin{align}
\label{eq:stjerne}
\tilde{\lambda}(\beta) = \beta \inf_{m \in {\mathbb Z}} e_{\delta(m,\beta), \beta}\;.
\end{align}
Here the function $\delta(m, \beta)$ was defined in \eqref{eq:delta} 
and $e_{\delta,\beta}$ is the lowest eigenvalue of the operator $\mathfrak h(\delta,\beta)$ associated with the quadratic form $q_{\delta,\beta}$ on 
$L^2((0, \sqrt{\beta}/2);(1-\beta^{-1/2}\tau)d\tau)$ (with Neumann boundary condition at $0$ and Dirichlet at $\sqrt{\beta}/2$):
\begin{align}
q_{\delta,\beta}[\phi] = \int_0^{\sqrt{\beta}/2} &\Big((1-\tfrac{\tau}{\sqrt{\beta}})^{-1} \big( (\tau+\xi_0) +\beta^{-\frac{1}{2}}(\delta -\tfrac{\tau^2}{2}) \big)^2 |\phi(\tau)|^2 \nonumber\\
&+ (1-\tfrac{\tau}{\sqrt{\beta}}) |\phi'(\tau)|^2\Big)\,d\tau \;.
\end{align}

The self-adjoint Neumann operator ${\mathfrak h}(\delta,\beta)$ associated with 
$q_{\delta,\beta}$ (on the space $L^2((0, \sqrt{\beta}/2);(1-\beta^{-1/2}\tau)d\tau)$) is
\begin{align}
{\mathfrak h}(\delta,\beta)&=
-(1-\tfrac{\tau}{\sqrt{\beta}})^{-1} \frac{d}{d\tau} (1-\tfrac{\tau}{\sqrt{\beta}}) \frac{d}{d\tau}\nonumber\\
&\quad+(1-\tfrac{\tau}{\sqrt{\beta}})^{-2} \big( (\tau+\xi_0) +\beta^{-\frac{1}{2}}(\delta -\tfrac{\tau^2}{2}) \big)^2\;.
\end{align}

We will only consider $\delta$ varying in a fixed bounded set. In this case,  we know  that there exists a $d>0$ such that if $\beta >d^{-1}$, then
the spectrum of $\mathfrak h(\delta,\beta)$ contained in $(-\infty, \Theta_0+d)$ consists of exactly one simple eigenvalue. We will justify this restriction later, using an a priori lower bound given by \cite[p. 193]{FH1}, which we now recall.
\begin{lemma} \label{lemLower} For all $C>0$, there exists positive constants $B_0$ and $D$ such that, if $|\delta|\ge D$ and $\beta\ge B_0$,
\begin{equation*}
 e_{\delta,\beta}\ge \Theta_0-C_1\beta^{-1/2}+C.
\end{equation*}
\end{lemma}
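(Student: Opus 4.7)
The plan is to establish the lower bound through a quantitative comparison between $\mathfrak h(\delta,\beta)$ and the de Gennes operator $\mathfrak h_0(\xi_0+\beta^{-1/2}\delta)$ on the half-line, following the strategy of Appendix~C in \cite{FH1}. Starting from the variational characterization $e_{\delta,\beta} = \inf_\phi q_{\delta,\beta}[\phi]/\|\phi\|^2$, I would first unfreeze the weighted measure by setting $u = (1-\beta^{-1/2}\tau)^{1/2}\phi$, so that $\|\phi\|^2_{L^2((1-\beta^{-1/2}\tau)\,d\tau)} = \|u\|^2_{L^2(d\tau)}$. Using the pointwise bounds $\tfrac{1}{2} \le 1-\beta^{-1/2}\tau \le 1$ on $(0,\sqrt{\beta}/2)$, the curvature weights $(1-\beta^{-1/2}\tau)^{\pm 1}$ can be expanded in powers of $\beta^{-1/2}\tau$. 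This rewrites $\mathfrak h(\delta,\beta)$ as a bounded perturbation of the restriction of $\mathfrak h_0(\xi_0+\beta^{-1/2}\delta)$ to $(0,\sqrt\beta/2)$ (with Neumann at $0$ and Dirichlet at $\sqrt\beta/2$), plus an explicit first-order term of size $\beta^{-1/2}$. Evaluating this correction on the normalized ground state $u_0 = \phi_{\xi_0}$ of $\mathfrak h_0(\xi_0)$, a Feynman--Hellmann-type perturbation calculation produces exactly the contribution $-C_1\beta^{-1/2}$, with $C_1 = u_0(0)^2/3$ arising via integration by parts from the boundary term at $\tau=0$.

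Combining this with standard Agmon estimates in the normal direction, which show that the Dirichlet cutoff at $\sqrt\beta/2$ alters the spectrum by only $\mathcal O(\beta^{-\infty})$, one obtains an inequality of the form
\[
e_{\delta,\beta} \ge \lambda^{DG}\bigl(\xi_0+\beta^{-1/2}\delta\bigr) - C_1\beta^{-1/2} - K\beta^{-1},
\]
uniform in $\delta$ (in the relevant range), for some universal constant $K$. Since $\xi_0$ is the unique non-degenerate minimum of $\lambda^{DG}$, while $\lambda^{DG}(\xi) \to 1 > \Theta_0$ as $\xi \to -\infty$ and $\lambda^{DG}(\xi)\to +\infty$ as $\xi\to+\infty$, there exists a continuous strictly increasing function $\gamma:[0,\infty)\to[0,\infty)$ with $\gamma(0)=0$ and $\gamma(s)>0$ for $s>0$ such that $\lambda^{DG}(\xi) \ge \Theta_0 + \gamma(|\xi-\xi_0|)$. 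Given $C>0$, I would then pick $D$ and $B_0$ so that the combined conditions $|\delta|\ge D$ and $\beta\ge B_0$ force $\gamma(\beta^{-1/2}|\delta|)\ge C + K\beta^{-1}$, which yields the stated inequality.

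The main technical obstacle is the precise identification of the first-order coefficient $-C_1 = -u_0(0)^2/3$, which requires combining the perturbative expansion of the curvature factors with careful control of the boundary value of $u_0$ at $\tau=0$; this is the core computational content of the argument in \cite{FH1}. A secondary difficulty is ensuring that the remainder $K\beta^{-1}$ is uniform in $\delta$: this typically forces a case split between a moderate regime of $\delta$, in which the perturbative reduction to $\mathfrak h_0(\xi_0+\beta^{-1/2}\delta)$ is accurate, and a very large regime of $\delta$, in which a direct lower bound extracted from the potential $(\tau+\xi_0+\beta^{-1/2}(\delta-\tau^2/2))^2$ is used to confine the ground state and produce a trivially large eigenvalue.
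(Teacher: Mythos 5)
The paper offers no proof of this lemma at all: it is quoted as an a priori bound from \cite{FH1} (p.~193), so there is nothing internal to compare against, and your attempt to reconstruct an argument via the shifted de Gennes operator $\mathfrak h_0(\xi_0+\beta^{-1/2}\delta)$ is the right mechanism in spirit. However, your final step does not close. You propose to choose constants $D$ and $B_0$ so that the hypotheses $|\delta|\ge D$ and $\beta\ge B_0$ force $\gamma(\beta^{-1/2}|\delta|)\ge C+K\beta^{-1}$. These two hypotheses put no lower bound on the ratio $\beta^{-1/2}|\delta|$: fixing $\delta=D$ and letting $\beta\to\infty$ sends $\beta^{-1/2}|\delta|\to 0$, hence $\gamma(\beta^{-1/2}|\delta|)\to 0$, and no choice of $D,B_0$ repairs this. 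A secondary point: since $\lambda^{DG}(\xi)\to 1$ as $\xi\to-\infty$, your $\gamma$ is bounded above by $1-\Theta_0$, so even where $\beta^{-1/2}|\delta|$ is large the gain on the side $\delta<0$ is capped and the conclusion for $C>1-\Theta_0$ is out of reach by this route.

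This is not merely a defect of your method; it signals that the statement, as printed, cannot be taken literally. By the expansion \eqref{eq:4.21} (uniform for $\delta$ in bounded sets, with $\lambda_1=-C_1$), for each fixed $\delta$ one has $e_{\delta,\beta}-\bigl(\Theta_0-C_1\beta^{-1/2}\bigr)=\lambda_2(\delta)\beta^{-1}+\mathcal O(\beta^{-3/2})\to 0$, which contradicts a $\beta$-independent gain $+C$ under a $\beta$-independent threshold $|\delta|\ge D$. What your comparison argument genuinely delivers, and what suffices for the only use made of the lemma in the paper (the bound $|\delta(n,\beta_n)|\le C$), is a gain of the form $c\min\bigl(1,\delta^2/\beta\bigr)$, i.e.\ at least $cD^2\beta^{-1}$, with $cD^2$ made as large as needed by enlarging $D$. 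You should consult \cite{FH1} for the exact formulation and aim your proof at that $\beta^{-1}$-scaled version; for it, your outline (including the case split between moderate $\delta$, handled perturbatively, and very large $\delta$, handled by a direct potential lower bound) is the correct strategy.
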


We can formally develop ${\mathfrak h}(\delta,\beta)$ as
$$
{\mathfrak h}(\delta,\beta) = {\mathfrak h}_0 + \beta^{-\frac{1}{2}} {\mathfrak h}_1 + \beta^{-1} {\mathfrak h}_2 + {\mathcal O}(\beta^{-\frac{3}{2}})\;.
$$
with
\begin{align}
\label{eq:hs}
{\mathfrak h}_0 &= -\frac{d^2}{d\tau^2} + (\tau+\xi_0)^2\;, \nonumber\\
{\mathfrak h}_1 &=\frac{d}{d\tau} + 2 (\tau+\xi_0) (\delta -\tfrac{\tau^2}{2}) + 2\tau (\tau+\xi_0)^2\;,\nonumber\\
{\mathfrak h}_2 &= \tau\frac{d}{d\tau} +  (\delta -\tfrac{\tau^2}{2})^2 + 4 \tau (\tau+\xi_0) (\delta -\tfrac{\tau^2}{2})
+ 3 \tau^2 (\tau+\xi_0)^2\;.
\end{align}
Let $u_0$ be the ground state eigenfunction of ${\mathfrak h}_0$ with eigenvalue $\Theta_0$, where ${\mathfrak h}_0$ is considered as a selfadjoint operator on $L^2({\mathbb R}_{+}; d\tau)$ with Neumann boundary condition at $0$.
Let $R_0$ be the regularized resolvent, which is defined by
$$
R_0 \phi = \begin{cases}({\mathfrak h}_0 -\Theta_0)^{-1} \phi\;, & \phi \perp u_0\;, \\ \quad 0\;, &
\phi \parallel u_0\,.
\end{cases}
$$
Here the orthogonality is measured with respect to the usual inner product in $L^2({\mathbb R}_{+}; d\tau)$.

Let $\lambda_1$ and $\lambda_2$ be given by
\begin{align}
\lambda_1 &:= \langle u_0 \,|\,{\mathfrak h}_1 u_0 \rangle\;, &
\lambda_2 &:= \lambda_{2,1} + \lambda_{2,2}\;,\nonumber\\
\lambda_{2,1}&:= \langle u_0 \,|\,{\mathfrak h}_2 u_0 \rangle \;,&
\lambda_{2,2}&:= \langle u_0 \,|\,({\mathfrak h}_1 - \lambda_1) u_1 \rangle \;,
\end{align}
Here the inner products are the usual inner products in $L^2({\mathbb R}_{+}; d\tau)$.
The functions $u_1, u_2$ are given as
\begin{align}
u_1 &= - R_0 ({\mathfrak h}_1 - \lambda_1) u_0\;,
&
u_2 &= - R_0 \big\{ ({\mathfrak h}_1 - \lambda_1) u_1 + ({\mathfrak h}_2 - \lambda_2)u_0 \big\}\;.
\end{align}
Notice that  $u_0 \in {\mathcal S}(\overline{{\mathbb R}_{+}})$ and that $R_0$ maps ${\mathcal S}(\overline{{\mathbb R}_{+}})$ (continuously) into itself.
Therefore, $u_0, u_1, u_2$ (and their derivatives) are rapidly decreasing functions on ${\mathbb R}_{+}$.

Let $\chi \in C_0^{\infty}({\mathbb R})$ be a usual cut-off function, such that
\begin{align}
\chi(t) &= 1 \quad \text{ for } |t|\leq \tfrac{1}{8} \;, &
{\rm supp} \chi &\subset [-\tfrac{1}{4}, \tfrac{1}{4}]\;,
\end{align}
and let $\chi_\beta(\tau) = \chi(\tau \beta^{-\frac{1}{4}})$\,.

Our trial state is defined by
\begin{align}
\label{eq:trial}
\psi := \chi_\beta \big\{ u_0 + \beta^{-\frac{1}{2}} u_1 + \beta^{-1} u_2 \big\}\;.
\end{align}
A calculation (using in particular the exponential decay of the involved functions) gives that
\begin{align}
&\big\| \big\{{\mathfrak h}(\delta,\beta) - \big(\Theta_0 + \lambda_1 \beta^{-\frac{1}{2}} + \lambda_2 \beta^{-1}\big)\big\}
\psi \big\|_{L^2([0, \sqrt{\beta}/2];(1-\beta^{-1/2}\tau)d\tau)} = {\mathcal O}(\beta^{-\frac{3}{2}})\;,\nonumber\\
&\| \psi \|_{L^2([0, \sqrt{\beta}/2];(1-\beta^{-1/2}\tau)d\tau)} = 1 +{\mathcal O}(\beta^{-\frac{1}{2}})\;,
\end{align}
where the constant in ${\mathcal O}$ is uniform for $\delta$ in bounded sets.

Therefore, we have proved that (uniformly for $\delta$ varying in bounded sets)
\begin{align}
e_{\delta, \beta} = \Theta_0 + \lambda_1 \beta^{-\frac{1}{2}} + \lambda_2 \beta^{-1} +{\mathcal O}(\beta^{-\frac{3}{2}})\;.
\end{align}
It remains to calculate $\lambda_1, \lambda_2$ and, in particular, deduce their dependence on $\delta$.

A standard calculation (which can for instance be found in \cite[Section 2]{FH0}) gives that
\begin{align}
\lambda_1=-C_1\;.
\end{align}

It was  harder to calculate $\lambda_2$ explicitly but it is proven in \cite{FH1} that
\begin{equation}\label{eq:bh-1}
\lambda_2(\delta)  = 3 C_1 
\Theta_0^{1/2}\big( (\delta - \delta_0)^2 + C_0\big)\;.
\end{equation}
Remembering \eqref{eq:trekant}, Lemma \ref{lemLower} and \eqref{eq:stjerne}, this finishes the proof of Theorem~\ref{theorem:disc}. A computation of $\delta_0$ will be given later.

The same proof (but  pushing the expansion further as done for instance in \cite{FP}) gives the following extension
\begin{proposition}
For any $2\leq N\in \mathbb N$, uniformly for $\delta$ varying in bounded sets, we have 
\begin{align}\label{eq:4.21}
e_{\delta, \beta} = \Theta_0 + \lambda_1 \beta^{-\frac{1}{2}} + \lambda_2 (\delta) \beta^{-1} +\sum_{j=3}^N \lambda_j(\delta) \beta^{-j/2} + {\mathcal O}(\beta^{-\frac{N+1}{2}})\;.
\end{align}
Moreover the $\lambda_j(\delta)$ are polynomials of $\delta$.
\end{proposition}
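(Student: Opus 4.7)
The strategy is to iterate the Rayleigh--Schr\"odinger/quasi-mode scheme used in the proof of Theorem~\ref{theorem:disc} above (and already pushed to higher order in \cite{FP}). Using the geometric series for $(1-\beta^{-1/2}\tau)^{-1}$ and $(1-\beta^{-1/2}\tau)^{-2}$ together with the expansion of $\bigl((\tau+\xi_0)+\beta^{-1/2}(\delta-\tau^2/2)\bigr)^2$, one obtains a term-by-term identity
\begin{equation*}
\mathfrak h(\delta,\beta) = \mathfrak h_0 + \sum_{k\ge 1}\beta^{-k/2}\,\mathfrak h_k(\delta),
\end{equation*}
in which each $\mathfrak h_k(\delta)$ is a second-order differential operator whose coefficients are polynomial in $\tau$ and $\delta$; since $\delta$ enters $\mathfrak h(\delta,\beta)$ only through the factor $(\delta-\tau^2/2)$ inside a square, the $\delta$-degree of $\mathfrak h_k$ is at most $2$. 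The cases $k=1,2$ recover the operators already listed in \eqref{eq:hs}.

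Next, I would run the formal recursion. Plugging the ansatz $u\sim\sum_{j\ge 0}\beta^{-j/2}u_j$ and $e\sim\Theta_0+\sum_{j\ge 1}\beta^{-j/2}\lambda_j$ into $(\mathfrak h(\delta,\beta)-e)u=0$ and collecting powers of $\beta^{-j/2}$ yields, for every $j\ge 1$,
\begin{equation*}
(\mathfrak h_0-\Theta_0)\,u_j = \sum_{k=1}^{j}\bigl(\lambda_k(\delta)\,u_{j-k}-\mathfrak h_k(\delta)\,u_{j-k}\bigr).
\end{equation*}
Projecting on $u_0$ and using the normalization $u_j\perp u_0$ for $j\ge 1$ determines $\lambda_j(\delta)=\sum_{k=1}^{j}\langle u_0,\mathfrak h_k(\delta)u_{j-k}\rangle$, and the Fredholm alternative then defines $u_j$ uniquely via the regularized resolvent $R_0$. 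Because $R_0$ preserves $\mathcal S(\overline{\mathbb R}_+)$ and each $\mathfrak h_k(\delta)$ has polynomial coefficients in $\delta$, a simultaneous induction on $j$ yields $u_j\in\mathcal S(\overline{\mathbb R}_+)$ and $u_j,\lambda_j$ polynomial in $\delta$.

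Third, I would assemble the truncated quasi-mode $\psi_N:=\chi_\beta\sum_{j=0}^{N-1}\beta^{-j/2}u_j$, using the same cut-off $\chi_\beta$ as in the paper, and estimate
\begin{equation*}
\Bigl\|\Bigl(\mathfrak h(\delta,\beta)-\sum_{j=0}^{N}\beta^{-j/2}\lambda_j(\delta)\Bigr)\psi_N\Bigr\|_{L^2((0,\sqrt\beta/2);(1-\beta^{-1/2}\tau)d\tau)}=O(\beta^{-(N+1)/2}),
\end{equation*}
uniformly for $\delta$ in a bounded set. The two error sources are the tail of the $\beta^{-1/2}$-Taylor expansion of $\mathfrak h(\delta,\beta)$ applied to the $u_j$, which is of the required size thanks to the Schwartz decay of each $u_j$, and the commutators $[\mathfrak h(\delta,\beta),\chi_\beta]$ together with the boundary values of $\psi_N$ at $\tau=\sqrt\beta/2$, both of which are $O(\beta^{-\infty})$ by the same decay. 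Combined with the a priori spectral isolation ensured by Lemma~\ref{lemLower} (which guarantees that $e_{\delta,\beta}$ is the only spectral value in a fixed neighborhood of $\Theta_0$ once $\beta$ is large enough), the standard spectral-theorem/quasi-mode argument yields the claimed expansion.

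The main obstacle I anticipate is not the formal computation itself but the uniform bookkeeping in $\delta$: one must check that each recursion step introduces only bounded $\delta$-dependence in the Schwartz semi-norms of $u_j$ that control the error, and that the discrepancy between $L^2(\mathbb R_+,d\tau)$ (on which the formal recursion lives) and $L^2((0,\sqrt\beta/2),(1-\beta^{-1/2}\tau)d\tau)$ (the actual Hilbert space) can be absorbed into the $O(\beta^{-(N+1)/2})$ error uniformly in $\delta$. All the analytic tools required --- the Schwartz-preserving property of $R_0$, the Agmon normal decay, and the spectral gap of Lemma~\ref{lemLower} --- are already available from the proof of Theorem~\ref{theorem:disc}, so the extension is essentially an iteration of that scheme, as already carried out in \cite{FP}.
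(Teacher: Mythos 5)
Your proposal follows essentially the same route as the paper: iterate the Rayleigh--Schr\"odinger quasi-mode construction of the $N=2$ case (expand $\mathfrak h(\delta,\beta)$ in powers of $\beta^{-1/2}$ into operators with coefficients polynomial in $\tau$ and $\delta$, solve the hierarchy with the regularized resolvent $R_0$, cut off with $\chi_\beta$, and invoke the spectral isolation guaranteed by Lemma~\ref{lemLower}), which is precisely what the paper does, referring to \cite{FP} for the higher-order iteration. The only slip is the truncation of your trial state at $j=N-1$: to make the residual ${\mathcal O}(\beta^{-(N+1)/2})$ one must include $u_N$ as in \eqref{eq:trialN}, but since your recursion produces all the $u_j$ this is a trivial bookkeeping fix.
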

\begin{proof}
Following the previous proof (corresponding to $N=2$)
\begin{align}
\label{eq:trialN}
\psi_N := \chi_\beta \big\{ \sum_{j= 0}^N   \beta^{-\frac{j}{2}} u_j \}
\end{align}
and 
\begin{align}\label{eq:trialNz}
&\big\| \big\{{\mathfrak h}(\delta,\beta) - \big(\Theta_0 + \lambda_1 \beta^{-\frac{1}{2}} +\sum_{j=2}^N  \lambda_j(\delta) \beta^{-j/2}\big)\big\}
\psi_N \big\|_{L^2([0, \sqrt{\beta}/2];(1-\beta^{-1/2}\tau)d\tau)} = {\mathcal O}(\beta^{-\frac{N+1}{2}})\;,\nonumber\\
&\| \psi_N \|_{L^2([0, \sqrt{\beta}/2];(1-\beta^{-1/2}\tau)d\tau)} = 1 +{\mathcal O}(\beta^{-\frac{1}{2}})\;,
\end{align}
where the constant in ${\mathcal O}$ is uniform for $\delta$ in bounded sets.\\
Note for later that the normalized eigenfunction $u_{\delta,\beta}$ associated with $e_{\delta,\beta}$ satisfies, for any interval $(0, M)$ and any $N$,
\begin{equation}\label{eq:4.24}
|| u_{\delta,\beta} - u_0 - \sum_{j=1}^N  \beta^{-\frac{j}{2}} u_j||_{H^1((0,M); (1-\beta^{-1/2}\tau)d\tau)}\leq C_{M,N} \beta^{- (N+1)/2}\,,
\end{equation}
for $\beta$ large enough.\\
Here the $u_j$ are in $\mathcal S(\overline{\mathbb R_+})$ and depend polynomially of $\delta$.\end{proof}

To carry our analysis further, we note that the asymptotic results in \cite{FH1} actually provide more information than summarized in Equation \eqref{eq:trekant}. To see this, we first remark that the Fourier decomposition with respect to $\theta$ introduced in Section \ref{s:intro} corresponds to a decomposition of $L^2(\mathbb D)$ into an orthogonal sum of subspaces
\begin{equation*}
	\bigoplus_{m\in\mathbb Z}\mathcal H_m:=\bigoplus_{m\in\mathbb Z}\left\{f(r)e^{im\theta}\left|f\in L^2((0,1),r\,dr)\right.\right\}.
\end{equation*}
Each $\mathcal H_m$ can be seen as an eigenspace of the angular momentum operator 
\begin{equation*}
	-i\left(x_1\pa_{x_2}-x_2\pa_{x_1}\right)=-i\partial _\theta\,,
\end{equation*}
which commutes with the self-adjoint operator $H_\beta$. Therefore, $\mathcal H_m$ (or more precisely its intersection with the domain of $H_\beta$) is an invariant subspace for $H_\beta$, and the operator $H_{m,\beta}$ defined in \eqref{eq:Hn} can be seen as the restriction of $H_\beta$ to $\mathcal H_m$.

Similarly, the Fourier decomposition used to obtain \eqref{eq:stjerne} corresponds to the orthogonal decomposition of $L^2(D(1)\setminus D(\frac12))$ into invariant subspaces 
\begin{equation*}
	\bigoplus_{m\in\mathbb Z}\widetilde{\mathcal H}_m:=\bigoplus_{m\in\mathbb Z}\left\{\phi(t)e^{im s}\left|\phi\in L^2((0,1/2),(1-t)\,dt)\right.\right\}.
\end{equation*}

The uniform radial Agmon estimates used in the proof of \eqref{eq:trekant}  (see again Theorem 4.1 in \cite{FH1}) are rotationally invariant, so the proof in \cite{FH1} applies separately to the invariant subspaces for each $m$. We thus obtain that $e_{\delta(m,\beta),\beta}$ is a close approximation to the eigenvalue $\eta(m\,\beta)$ in the following sense.

\begin{proposition}\label{prop:approx}
Given $\varepsilon\in (0,1)$, there exist positive constants $B_0,\alpha,C$ such that, for each $\beta>B_0$ and $m\in\mathbb Z$ for which the condition $\eta(m,\beta)\le 1-\varepsilon$ is satisfied, we have
\begin{equation*}	
	\left|e_{\delta(m,\beta)}-\eta(m,\beta)\right|\le Ce^{-\alpha \beta^{1/2}}.
\end{equation*}
\end{proposition}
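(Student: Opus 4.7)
The plan is to establish the two-sided inequality
\[
\eta(m,\beta)\leq e_{\delta(m,\beta),\beta}\leq \eta(m,\beta)+Ce^{-\alpha\sqrt{\beta}},
\]
one direction by extension by zero and the other by an Agmon-plus-cutoff argument. As the authors remark just before the statement, the radial Agmon estimate of \cite[Theorem 4.1]{FH1} is rotation-invariant and restricts to each invariant subspace $\mathcal H_m$; the only additional input needed is that, under the spectral hypothesis $\eta(m,\beta)\leq 1-\varepsilon$, the resulting decay rate is uniform in $m$.

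For the left inequality, I would take the positive eigenfunction $\tilde f_{m,\beta}$ of the annular model (Dirichlet at $r=1/2$, Neumann at $r=1$) with eigenvalue $\beta\, e_{\delta(m,\beta),\beta}$ and extend it by zero to $(0,1/2)$. The extension sits in the form domain of $H_{m,\beta}$ with the same Rayleigh quotient, so the min-max principle gives $\lambda(m,\beta)\leq\beta\, e_{\delta(m,\beta),\beta}$, with no spectral hypothesis required. For the right inequality, I would apply the magnetic Agmon argument to $f_{m,\beta}$: since the bulk threshold of $H_\beta$ is the first Landau level $\beta$ and $\lambda(m,\beta)\leq (1-\varepsilon)\beta$, the standard weight $e^{\alpha\sqrt{\beta}(1-r)}$ with $\alpha^2<\varepsilon$ yields, uniformly in $m$,
\[
\int_0^1 e^{2\alpha\sqrt{\beta}(1-r)}\,|f_{m,\beta}(r)|^2\, r\, dr\leq C.
\]
Then I would multiply $f_{m,\beta}$ by a smooth radial cutoff $\chi$ equal to $1$ on $[3/4,1]$ and vanishing on $[0,1/2]$, use $u:=\chi f_{m,\beta}$ as a test function for the annular problem, and appeal to the IMS localization formula for the radial operator $H_{m,\beta}$ to obtain that the Rayleigh quotient of $u$ equals $\lambda(m,\beta)+\|\chi' f_{m,\beta}\|^2/\|\chi f_{m,\beta}\|^2$ in $L^2((0,1),r\,dr)$. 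Since $\chi'$ is supported in $[1/2,3/4]$, where by the previous bound $f_{m,\beta}$ is exponentially small, the correction term is controlled by $Ce^{-\alpha\sqrt{\beta}/2}$; dividing by $\beta$ then gives the claimed right-hand inequality.

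The main point to verify is the uniformity in $m$ of the Agmon estimate. This should follow from the fact that the magnetic Agmon argument relies only on the spectral gap $\beta-\lambda(m,\beta)\geq\varepsilon\beta$ and on the rotation-invariance of $H_\beta$, not on finer properties of the effective one-dimensional potential $(\beta r/2-m/r)^2$, so the same rate $\alpha=\alpha(\varepsilon)$ applies in every $\mathcal H_m$ satisfying the hypothesis, which is exactly what the statement requires.
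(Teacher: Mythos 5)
Your proposal is correct and follows essentially the same route as the paper, which gives no detailed proof but simply observes that the disk--annulus comparison of \cite{FH1} (extension by zero in one direction, Agmon decay plus cutoff in the other) is rotationally invariant and therefore restricts to each invariant subspace $\mathcal H_m$, with uniformity in $m$ coming from the spectral hypothesis $\eta(m,\beta)\le 1-\varepsilon$ exactly as you note. Your write-up just makes explicit the two-sided argument that the paper delegates to \cite[Theorem 4.1]{FH1}.
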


Gathering the preceding results, we get the following.
\begin{corollary}
 For any $N$, we have with $\beta=\beta_n $ and $\delta= \delta(n,\beta_n )$
\begin{equation}\label{eq:bh0}
{\lambda(\beta_n )}=\lambda(n,\beta_n )=\beta\Big( \Theta_0 + \lambda_1 \beta^{-\frac{1}{2}} + \lambda_2(\delta) \beta^{-1} +\sum_{j=3}^N \lambda_j(\delta) \beta^{-j/2} + {\mathcal O}(\beta^{-\frac{N+1}{2}})\Big)\,.
\end{equation}
\end{corollary}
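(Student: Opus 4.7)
The plan is to combine three ingredients already established in the paper: the identity $\lambda(n,\beta_n)=\beta_n\,\eta(n,\beta_n)=\beta_n\,\eta_n^*$ coming from the definitions, the exponentially accurate approximation $|\eta(m,\beta)-e_{\delta(m,\beta),\beta}|=\Og(e^{-\alpha\sqrt{\beta}})$ of Proposition \ref{prop:approx}, and the polynomial expansion \eqref{eq:4.21} of $e_{\delta,\beta}$, valid uniformly for $\delta$ in a bounded set. Once these are put together at $m=n$, $\beta=\beta_n$, and multiplied by $\beta_n$, the formula \eqref{eq:bh0} drops out.

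To apply Proposition \ref{prop:approx} I first need $\eta_n^*\leq 1-\eps$ for some fixed $\eps>0$ and all $n$ large enough. This is immediate from the known convergence $\eta_n^*\to\Theta_0<1$ (a consequence of \eqref{eq:C1ab}). Proposition \ref{prop:approx} then yields
\[
\eta_n^*=e_{\delta(n,\beta_n),\beta_n}+\Og(e^{-\alpha\sqrt{\beta_n}})\,.
\]
To substitute \eqref{eq:4.21} at $\delta=\delta(n,\beta_n)$ with a remainder that is uniform in $n$, the key point I must verify is that the sequence $\bigl(\delta(n,\beta_n)\bigr)_{n\in\mathbb N}$ stays in a bounded set.

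This boundedness is the main obstacle, and it rests on a precise cancellation of the two leading $\Og(\sqrt n)$ contributions in $\delta(n,\beta_n)=n-\beta_n/2-\xi_0\sqrt{\beta_n}$. From Saint-James Formula \eqref{eq:sj1aa},
\[
\beta_n=(2n+1+2\eta_n^*)+\sqrt{(2\eta_n^*+1)^2+8n\eta_n^*}\,,
\]
a routine Taylor expansion in $1/n$ gives
\[
\beta_n=2n+2\sqrt{2n\,\eta_n^*}+\Og(1)\,,\qquad \sqrt{\beta_n}=\sqrt{2n}+\sqrt{\eta_n^*}+\Og(n^{-1/2})\,,
\]
and hence
\[
\delta(n,\beta_n)=\sqrt{2n}\bigl(\sqrt{\Theta_0}-\sqrt{\eta_n^*}\bigr)+\Og(1)\,,
\]
after using the identity $-\xi_0=\sqrt{\Theta_0}$, which follows from $\Theta_0=\xi_0^2$ and $\xi_0<0$ (see \eqref{eq:C1a}). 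Finally, the Fournais--Helffer asymptotics of Theorem \ref{theorem:disc} give $\eta_n^*=\Theta_0-C_1\beta_n^{-1/2}+\Og(\beta_n^{-1})$; combined with $\beta_n\sim 2n$, this shows $\sqrt{\Theta_0}-\sqrt{\eta_n^*}=\Og(1/\sqrt n)$, and therefore $\delta(n,\beta_n)=\Og(1)$.

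With $\bigl(\delta(n,\beta_n)\bigr)_n$ confined to a bounded set, expansion \eqref{eq:4.21} applies at $\delta=\delta(n,\beta_n)$, $\beta=\beta_n$, with remainder $\Og(\beta_n^{-(N+1)/2})$. Substituting back and absorbing the exponentially small error $\Og(e^{-\alpha\sqrt{\beta_n}})$ into that algebraic remainder, one obtains
\[
\eta_n^*=\Theta_0+\lambda_1\beta_n^{-1/2}+\sum_{j=2}^N\lambda_j\bigl(\delta(n,\beta_n)\bigr)\beta_n^{-j/2}+\Og\bigl(\beta_n^{-(N+1)/2}\bigr)\,.
\]
Multiplying through by $\beta_n$, and invoking Theorem \ref{theorem:groundstate} (which gives $\lambda(\beta_n)=\lambda(n,\beta_n)$), then yields \eqref{eq:bh0}.
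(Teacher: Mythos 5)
Your proposal is correct, and it follows the paper's overall skeleton (spectral gap condition, Proposition \ref{prop:approx}, boundedness of $\delta(n,\beta_n)$, then the uniform expansion \eqref{eq:4.21}), but it justifies the one genuinely non-trivial step --- the boundedness of $\bigl(\delta(n,\beta_n)\bigr)_n$ --- by a different route. The paper gets this boundedness ``for free'' from the a priori lower bound of Lemma \ref{lemLower}: if $|\delta(n,\beta_n)|$ were unbounded, then $e_{\delta(n,\beta_n),\beta_n}$ would exceed $\Theta_0-C_1\beta_n^{-1/2}+C$ for arbitrarily large $C$ along a subsequence, contradicting $e_{\delta(n,\beta_n),\beta_n}=\eta(n,\beta_n)+\mathcal O(\beta_n^{-\infty})\to\Theta_0$. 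You instead compute $\delta(n,\beta_n)$ directly from Saint-James formula \eqref{eq:sj1aa} together with the two-term Fournais--Helffer asymptotics of Theorem \ref{theorem:disc} (noting that $\Delta_\beta$ is bounded, so $\eta_n^*=\Theta_0-C_1\beta_n^{-1/2}+\mathcal O(\beta_n^{-1})$), obtaining the cancellation $\delta(n,\beta_n)=\sqrt{2n}\bigl(\sqrt{\Theta_0}-\sqrt{\eta_n^*}\bigr)+\mathcal O(1)=\mathcal O(1)$; I checked the Taylor expansions and the use of $-\xi_0=\Theta_0^{1/2}$, and they are correct. Your argument is logically sound (Theorems \ref{theorem:sj} and \ref{theorem:disc} are both available at this point), and it even yields quantitative information about $\delta(n,\beta_n)$ that anticipates \eqref{eq:bh1}; its cost is that it imports Saint-James formula and the full two-term expansion of $\lambda(\beta)$, whereas the paper's argument stays entirely inside the semiclassical machinery (the authors explicitly defer mixing in Saint-James formula until after this corollary) and is shorter.
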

\begin{proof} Since $\eta(n,\beta_n)\to \Theta_0<1$ as $n\to\infty$, the spectral gap condition $\eta(n,\beta)\le 1-\varepsilon$ is satisfied, for some fixed $\varepsilon>0$, when $n$ is large enough. Proposition \ref{prop:approx} therefore tells us that 
\begin{equation}\label{eq:approxBranch}
\eta(n,\beta_n)=e_{\delta(n,\beta_n),n}+\mathcal O\left(\beta_n^{-\infty}\right).
\end{equation}
It then follows from Lemma \ref{lemLower} that
\[\left|\delta(n,\beta_n)\right|\le C\]
for some constant $C$. We can now combine the estimate \eqref{eq:approxBranch} with the asymptotic expansion \eqref{eq:4.21} and multiply by $\beta_n$ to obtain \eqref{eq:bh0}. 
\end{proof}

\begin{remark} Since $\lambda(n,\beta_n)=\lambda(\beta_n)=\lambda(n+1,\beta_n)$, the same reasoning as in the previous proof results in
\begin{equation}\label{eq:asymptBis}
\lambda(n+1,\beta_n )=\beta\Big( \Theta_0 + \lambda_1 \beta^{-\frac{1}{2}} + \lambda_2(\delta) \beta^{-1} +\sum_{j=3}^N \lambda_j(\delta) \beta^{-j/2} + {\mathcal O}(\beta^{-\frac{N+1}{2}})\Big)
\end{equation}
for any $N$, with $\beta=\beta_n $ and $\delta= \delta(n+1,\beta_n )$.
\end{remark}

\begin{theorem}
 There exist  sequences $\hat \delta_j$, $\tilde \delta_j$ ($j\geq 1$) and $\hat \lambda_j$ ($j\geq 3$) such that, for any $N$, we have 
\begin{equation}\label{eq:bh1}
\delta(n,\beta_n) = \delta_0-\frac 12 +\sum_{1\leq j \leq N}\hat \delta_j \, \beta_n ^{-j/2} + {\mathcal O}(\beta_n ^{-\frac{N+1}{2}})\,,
\end{equation}
\begin{equation}\label{eq:bh1a}
\delta(n+1,\beta_n) = \delta_0 + \frac 12 +\sum_{1\leq j \leq N}\tilde \delta_j \, \beta_n ^{-j/2} + {\mathcal O}(\beta_n ^{-\frac{N+1}{2}})\,,
\end{equation}
and 
\begin{equation}\label{eq:bh2}
 \lambda(\beta_n )/\beta_n =  \Theta_0 + \lambda_1 \, \beta_n ^{-\frac{1}{2}} + \lambda_2(\delta_0-\frac 12)\, \beta_n ^{-1} +\sum_{j=3}^N \hat \lambda_j \, \beta_n ^{-j/2} + {\mathcal O}(\beta_n ^{-\frac{N+1}{2}})\,.
\end{equation}
\end{theorem}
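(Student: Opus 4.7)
The strategy is to exploit the fact that at the crossing point $\beta_n$ we have \emph{two} simultaneous asymptotic expansions for the same quantity $\lambda(\beta_n)/\beta_n$, namely \eqref{eq:bh0} evaluated at $\delta = \delta(n,\beta_n)$ and \eqref{eq:asymptBis} evaluated at $\delta = \delta(n+1,\beta_n)$. From the definition \eqref{eq:delta} of $\delta(m,\beta)$ we have the exact algebraic identity $\delta(n+1,\beta_n) = \delta(n,\beta_n) + 1$. Setting $\delta_n := \delta(n,\beta_n)$ and subtracting the two expansions, the $\delta$-independent terms $\Theta_0$ and $\lambda_1 \beta_n^{-1/2}$ drop out, and we obtain the single scalar equation
\begin{equation*}
  \sum_{j=2}^{N} \bigl[\lambda_j(\delta_n + 1) - \lambda_j(\delta_n)\bigr]\, \beta_n^{-j/2} = \mathcal{O}\bigl(\beta_n^{-(N+1)/2}\bigr).
\end{equation*}

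At leading order ($j=2$), using the explicit formula \eqref{eq:bh-1}, $\lambda_2(\delta_n + 1) - \lambda_2(\delta_n) = 3 C_1 \Theta_0^{1/2}(2(\delta_n - \delta_0) + 1)$, so the equation forces $\delta_n = \delta_0 - 1/2 + \mathcal{O}(\beta_n^{-1/2})$. This is the leading term of \eqref{eq:bh1}, and since $\delta(n+1,\beta_n) = \delta_n + 1$, this simultaneously gives the leading term $\delta_0 + 1/2$ in \eqref{eq:bh1a}.

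For the higher-order corrections, I would set $\delta_n = \delta_0 - 1/2 + \sum_{j=1}^N \hat\delta_j \beta_n^{-j/2} + R_N$ and determine the $\hat\delta_j$ recursively. The key non-degeneracy is that the derivative of the left-hand side with respect to $\delta_n$ at the leading-order solution equals $\lambda_2'(\delta_0 + 1/2) - \lambda_2'(\delta_0 - 1/2) = 6 C_1 \Theta_0^{1/2} \neq 0$, so the implicit function theorem (applied to the formal power series in $\beta_n^{-1/2}$) produces a unique solution order by order. At each step one uses that every $\lambda_j(\delta)$ is a polynomial in $\delta$ (hence so is $\lambda_j(\delta+1) - \lambda_j(\delta)$), so the coefficients $\hat\delta_j$ are determined algebraically by the previously computed ones. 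Substituting the resulting expansion of $\delta_n$ back into \eqref{eq:bh0} and Taylor-expanding each polynomial $\lambda_j(\delta)$ around $\delta_0 - 1/2$ then yields \eqref{eq:bh2}, with $\hat\lambda_j$ given as explicit polynomial combinations of the $\lambda_k$ and $\hat\delta_k$.

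The main obstacle is not the algebraic bookkeeping but the rigorous control of the remainders: one must confirm that the a priori bound $|\delta(n,\beta_n)| \leq C$ (coming from Lemma \ref{lemLower} via the corollary already established) persists to the tighter bound $\delta_n = \delta_0 - 1/2 + \mathcal{O}(\beta_n^{-1/2})$, so that each $\hat\delta_j$ produced by the iteration is indeed $\mathcal{O}(1)$ uniformly in $n$, and that the error $\mathcal{O}(\beta_n^{-(N+1)/2})$ in \eqref{eq:bh0} and \eqref{eq:asymptBis} propagates correctly through the implicit inversion. Since the Jacobian $6 C_1 \Theta_0^{1/2}$ is a fixed non-zero constant, this propagation is uniform in $n$, and the theorem follows.
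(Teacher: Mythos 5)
Your proposal is correct and follows essentially the same route as the paper: both compare the two asymptotic expansions of $\lambda(\beta_n)/\beta_n$ (with $\delta=\delta(n,\beta_n)$ and $\delta=\delta(n+1,\beta_n)=\delta(n,\beta_n)+1$), use the quadratic form of $\lambda_2$ to pin down the leading term $\delta_0-\tfrac12$, and then proceed by recursion to all orders. Your explicit identification of the non-vanishing Jacobian $6C_1\Theta_0^{1/2}$ makes the recursion step (which the paper leaves as ``by recursion we get expansions at any order'') more transparent, but it is the same argument.
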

\begin{proof}
To get \eqref{eq:bh1}, we simply compare the expansions given for $\lambda(n,\beta_n )$ (i.e $\delta=\delta(n,\beta_n )$ and $\lambda(n+1,\beta_n )$ (i.e. $\delta=\delta(n+1,\beta_n )$).\\

Let us describe  the first step in more detail. Taking the first three terms of the asymptotic expansions \eqref{eq:bh0} and \eqref{eq:asymptBis}, we obtain
\begin{align*}
\lambda(\beta_n ) /\beta_n  &=\Theta_0 - C_1(\beta_n )^{-1/2} + 3 C_1 \Theta_0^{1/2}\big((\delta(n,\beta_n )-\delta_0)^2  + C_0 \big)(\beta_n )^{-1} + \mathcal O(\beta_n^{-3/2})\\
&=\Theta_0 - C_1(\beta_n )^{-1/2} + 3 C_1 \Theta_0^{1/2}\big((\delta(n+1,\beta_n )-\delta_0)^2  + C_0 \big)(\beta_n )^{-1} + \mathcal O(\beta_n^{-3/2})\,.
\end{align*}
From this, we deduce
 \begin{equation}
 \delta (n,\beta_n ) + \delta(n+1,\beta_n ) = 2 \delta_0 + \mathcal O(\beta_n^{-1/2})\,.
 \end{equation}
 which implies using \eqref{eq:delta}, \eqref{eq:bh1}
 and 
 \begin{equation}\label{az1}
 \delta_0 - \delta (n,\beta_n )=\frac 12 +  \mathcal O(\beta_n^{-1/2})\,.
 \end{equation}
 Hence we obtain at the crossing point $\beta_n $
\begin{equation}\label{eq:bh2ab}
\frac{\lambda(\beta_n )}{\beta_n }:= \eta_n^* =\Theta_0 - C_1(\beta_n ) ^{-1/2} + 3 C_1 \Theta_0^{1/2}(\frac 14 + C_0 )(\beta_n )^{-1} +  \mathcal O(\beta_n^{-3/2}) \,.
\end{equation}
All these formulas correspond to an asymptotic as $\beta_n\rightarrow + \infty$. \\
By recursion we get expansions at any order.
\end{proof}

 Notice  that we have not at the moment mixed these results with Saint-James formula. We use it now and in Section \ref{sec:sj-implement}.
 
\begin{remark}
 Combining with   \eqref{eq:sj1a} at $\beta_n $ 
we get, using \eqref{eq:delta}, \eqref{eq:bh1} and \eqref{eq:bh2}, 
\begin{equation}
\delta_0 = \frac 12 \Theta_0^{-1/2} C_1 \sim 0.0975\,.
\end{equation}
This formula seems to be new and was not obtained in \cite{FH1}.
\end{remark}

  \subsection{Implementation of  Saint-James formula} 
  \label{sec:sj-implement}
  In this subsection, we give a complete expansion for $\beta_n $ as an asymptotic series in powers of $n^{-1/2}$.
  \begin{theorem}\label{prop9.1} There is an infinite sequence $\hat \kappa_j$ ($j\geq 0$) such that
\begin{subequations}\label{eq:7.13z}
\begin{equation}
\beta_n  =   2n  + \xi_1  n^\frac 12 + \hat \kappa_0 + \sum_{j\geq 1}\hat \kappa_{j} n^{-j/2} + \mathcal O (n^{-\infty})\,,
\end{equation}
with
\begin{equation}
\xi_1= - 2^{3/2} \xi_0\,,\, \hat \kappa_0= 1- 2 \delta_0 + 2 \xi_0^2\,.
\end{equation}
\end{subequations}
\end{theorem}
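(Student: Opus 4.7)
The plan is to solve Saint-James formula \eqref{eq:sj1a} asymptotically by inserting the expansion \eqref{eq:bh2} of $\eta_n^* = \lambda(\beta_n)/\beta_n$ and iterating. Rewriting \eqref{eq:sj1a} as
\[
\beta_n - (2n+1) = \sqrt{4\beta_n \eta_n^* + 1}
\]
(the positive square root being selected by Theorem \ref{theorem:sj}), and recalling from \eqref{eq:bh2} that $\eta_n^*$ admits a full asymptotic expansion in powers of $\beta_n^{-1/2}$ with leading terms $\Theta_0 - C_1\beta_n^{-1/2} + \lambda_2(\delta_0-\tfrac12)\beta_n^{-1} + \cdots$, we note at once that $\beta_n = 2n + \mathcal{O}(\sqrt n)$ since $\eta_n^* \to \Theta_0 \in (0,1)$. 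Thus $\beta_n$ and $n$ are comparable and fractional-power expansions in $\beta_n^{-1/2}$ translate into expansions in $n^{-1/2}$.

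Next, I would extract the two leading-order terms explicitly in order to check $\xi_1$ and $\hat\kappa_0$. Substituting $\eta_n^* = \Theta_0 - C_1\beta_n^{-1/2} + \mathcal{O}(\beta_n^{-1})$ into $4\beta_n\eta_n^* + 1$ and expanding the square root gives
\[
\sqrt{4\beta_n\eta_n^* + 1} = 2\sqrt{\Theta_0}\,\beta_n^{1/2} - \frac{C_1}{\sqrt{\Theta_0}} + \mathcal{O}(\beta_n^{-1/2}).
\]
Plugging back into the rewritten Saint-James formula produces
\[
(\beta_n^{1/2} - \sqrt{\Theta_0})^2 = 2n + 1 + \Theta_0 - C_1/\sqrt{\Theta_0} + \mathcal{O}(n^{-1/2}),
\]
so $\beta_n^{1/2} = \sqrt{2n} + \sqrt{\Theta_0} + \mathcal{O}(n^{-1/2})$, and squaring yields
\[
\beta_n = 2n - 2^{3/2}\xi_0\sqrt n + 2\xi_0^2 + 1 + C_1/\xi_0 + \mathcal{O}(n^{-1/2}),
\]
using $\sqrt{\Theta_0} = -\xi_0$ (as $\xi_0 < 0$). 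Combined with the identity $\delta_0 = -C_1/(2\xi_0)$ derived at the end of Section \ref{subsec:disc}, this identifies $\xi_1 = -2^{3/2}\xi_0$ and $\hat\kappa_0 = 1 - 2\delta_0 + 2\xi_0^2$.

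Finally, I would extend the computation to all orders by induction on $k \geq 1$. Assuming an approximation $\beta_n = 2n + \xi_1\sqrt n + \hat\kappa_0 + \sum_{j=1}^{k-1}\hat\kappa_j n^{-j/2} + R_k$ with $R_k = \mathcal{O}(n^{-k/2})$, I would substitute this into \eqref{eq:bh2} to expand $\eta_n^*$ to order $n^{-(k+1)/2}$, then Taylor-expand the square root in Saint-James formula around its dominant value $\sqrt{8n\Theta_0}$. Matching the coefficient of $n^{-k/2}$ on both sides produces a linear equation for $\hat\kappa_k$ with non-zero leading coefficient (essentially $1$, coming from the $\beta_n$ on the left-hand side), determining it uniquely and thereby closing the induction. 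The main technical obstacle is bookkeeping: one must verify that each iteration improves the remainder by exactly one power of $n^{-1/2}$ without cumulative degradation. This is controlled by the fact that the Taylor coefficients of $\sqrt{1+u}$ and the $\hat\lambda_j$ in \eqref{eq:bh2} are $n$-independent, and by the uniform remainder $\mathcal{O}(\beta_n^{-(N+1)/2})$ in \eqref{eq:bh2} which translates back to $\mathcal{O}(n^{-(N+1)/2})$ through $\beta_n \sim 2n$, yielding the asymptotic series \eqref{eq:7.13z}.
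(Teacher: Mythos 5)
Your proposal is correct and follows essentially the same route as the paper: insert the asymptotic expansion \eqref{eq:bh2} of $\lambda(\beta_n)/\beta_n$ into Saint-James formula \eqref{eq:sj1a}, expand the square root, and determine the coefficients $\hat\kappa_j$ recursively, with the leading computation giving $\xi_1=2^{3/2}\Theta_0^{1/2}=-2^{3/2}\xi_0$ and $\hat\kappa_0=1+2\xi_0^2+C_1/\xi_0=1-2\delta_0+2\xi_0^2$ via the identity $\delta_0=\tfrac12\Theta_0^{-1/2}C_1$. Your first-step bookkeeping (completing the square in $\beta_n^{1/2}$) is a harmless variant of the paper's direct coefficient matching, and the values you obtain agree with the statement.
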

In particular, this implies
\begin{corollary}
\begin{equation}\label{eq:7.14z}
\gamma_n:= \beta_{n+1}-\beta_n  = 2 + \frac{\xi_1}{2} n^{-1/2} + \hat \gamma_3 n^{-3/2} + \mathcal O (n^{-2}) \,.
\end{equation}
and
\begin{equation}\label{eq:7.14y}
\gamma_{n+1}-\gamma_{n} = - \frac 14 \xi_1 n^{-3/2} + \mathcal O (n^{-5/2})\,.
\end{equation}
\end{corollary}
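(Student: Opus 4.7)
The plan is to read both formulas off the full asymptotic expansion of $\beta_n$ provided by Theorem \ref{prop9.1}, by elementary application of the binomial series $(n+1)^\alpha = n^\alpha + \alpha n^{\alpha-1} + \frac{\alpha(\alpha-1)}{2} n^{\alpha-2} + \mathcal O(n^{\alpha-3})$ term by term. No new analytic input is needed; only bookkeeping.

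First, I would take \eqref{eq:7.13z} and compute $\gamma_n = \beta_{n+1} - \beta_n$ by forming the difference power by power. The leading term $2n$ gives exactly $2$. The term $\xi_1 n^{1/2}$ contributes $\xi_1\bigl[(n+1)^{1/2} - n^{1/2}\bigr] = \frac{\xi_1}{2}n^{-1/2} - \frac{\xi_1}{8}n^{-3/2} + \mathcal O(n^{-5/2})$. The constant $\hat\kappa_0$ contributes $0$. Each tail term $\hat\kappa_j n^{-j/2}$ with $j\geq 1$ contributes $-\frac{j}{2}\hat\kappa_j n^{-(j+2)/2} + \mathcal O(n^{-(j+4)/2})$, so that $\hat\kappa_1 n^{-1/2}$ contributes $-\frac{\hat\kappa_1}{2}n^{-3/2} + \mathcal O(n^{-5/2})$ and $\hat\kappa_2 n^{-1}$ contributes $\mathcal O(n^{-2})$. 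Summing, the coefficient of $n^{-1}$ vanishes automatically (there is no term in $\beta_n$ of order $n^{1/2-k}$ beyond the single $\xi_1 n^{1/2}$, whose first correction already sits at $n^{-3/2}$), and I obtain \eqref{eq:7.14z} with
\[\hat\gamma_3 = -\tfrac{\xi_1}{8} - \tfrac{\hat\kappa_1}{2}.\]

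For the second formula I apply exactly the same procedure to the expansion of $\gamma_n$ just derived. The constant $2$ contributes nothing to $\gamma_{n+1} - \gamma_n$; the term $\frac{\xi_1}{2}n^{-1/2}$ contributes $-\frac{\xi_1}{4}n^{-3/2} + \mathcal O(n^{-5/2})$; the term $\hat\gamma_3 n^{-3/2}$ contributes $\mathcal O(n^{-5/2})$; and the $\mathcal O(n^{-2})$ remainder contributes $\mathcal O(n^{-3})$. Collecting these yields \eqref{eq:7.14y}.

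The only point requiring care is the uniformity of the remainders. Since Theorem \ref{prop9.1} states the expansion is asymptotic to arbitrary order with the stronger $\mathcal O(n^{-\infty})$ remainder, and since the binomial remainder $(n+1)^\alpha - n^\alpha - \alpha n^{\alpha-1} - \cdots$ at any fixed truncation is uniformly controlled for $n\geq 1$, I can truncate the series for $\beta_n$ at whatever order is needed ($n^{-2}$ here, $n^{-5/2}$ for the second formula) and subtract safely. There is no substantive obstacle: the computation reduces to arithmetic on finitely many leading coefficients, and the claimed orders of the remainders follow from the truncation bookkeeping.
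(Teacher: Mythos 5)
Your proposal is correct and is essentially the argument the paper intends (the corollary is stated without an explicit proof, the implicit one being exactly this term-by-term differencing of the expansion \eqref{eq:7.13z}); your coefficient $\hat\gamma_3=-\xi_1/8-\hat\kappa_1/2$ and the automatic vanishing of the $n^{-1}$ term are right. The only sentence to tighten is the claim that ``the $\mathcal O(n^{-2})$ remainder contributes $\mathcal O(n^{-3})$'' to $\gamma_{n+1}-\gamma_n$ -- a difference of $\mathcal O(n^{-2})$ quantities is a priori only $\mathcal O(n^{-2})$ -- but you already fix this in your last paragraph by truncating $\gamma_n$ at order $n^{-5/2}$, which the full expansion of $\beta_n$ permits.
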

This in particular proves for $n$ large that the sequence $\gamma_n$ is decreasing.
\\ 

{\it Proof of Theorem \ref{prop9.1}}\\
To prove \eqref{eq:7.13z},  we  implement our candidate for the expansion into \eqref{eq:sj1a} which reads at $\beta_n $:
$$
\beta_n  - (2n+1)= \sqrt{4 \lambda(n,\beta_n ) +1}\,.
$$
The $\hat \kappa_j$ are then obtained by recursion.\\

Let us more explicitly describe the first step.\\
We start from \eqref{eq:sj1a} at $\beta=\beta_n $ but we implement \eqref{eq:bh2}
\begin{equation}\label{eq:bh2impa}
\lambda(\beta_n )= \beta_n  \big(\Theta_0 - C_1(\beta_n ) ^{-1/2} + 3 C_1 \Theta_0^{1/2}(\frac 14 + C_0 )(\beta_n )^{-1}\big)  + \mathcal O (n^{-1/2} )\,.
\end{equation}
This leads to
\begin{equation*}
4 \lambda(\beta_n )+ 1= 4\beta_n  \big(\Theta_0 - C_1(\beta_n ) ^{-1/2} + C_2 (\beta_n )^{-1}\big)  + \mathcal O (n^{-1/2} )\,,
\end{equation*}
with
$$
C_2:= 3 C_1 \Theta_0^{1/2}(\frac 14 + C_0 )+ \frac 14 \,.
$$
We then get
$$
\sqrt{4 \lambda(\beta_n )+ 1} = 2 \Theta_0^{1/2}\beta_n^{1/2}  \big( 1 -  C_1\Theta_0^{-1} \beta_n^{-1/2} + C_2\Theta_0^{-1} \beta_n^{-1} + \mathcal O (n^{-3/2})   \big)^{\frac 12} 
$$
which implies
$$
\sqrt{4 \lambda(\beta_n )+ 1} = 2 \Theta_0^{1/2}\beta_n^{1/2}   -   C_1\Theta_0^{-1/2} + C_3\Theta_0^{-1} \beta_n^{-1/2} + \mathcal O (n^{-1})  \,,
$$
with $C_3$ computable.\\
This leads to
$$
\beta_n= 2n +1 + 2 \Theta_0^\frac 12\beta_n^{1/2}   -   C_1\Theta_0^{-1/2} + C_3\Theta_0^{-1} \beta_n^{-1/2} + \mathcal O (n^{-1}) \,.
$$
Implementing $\beta_n  =  2n  + \xi_1  n^\frac 12 +\hat \kappa_0  + \hat \kappa_1 n^{-1/2} + \mathcal O (n^{-1})$, we get the existence of explicit $\hat \kappa_0$ (as given in the statement)  and  $\hat \kappa_1$.
\qed

 From the complete expansion of $\beta_n$, we deduce
 \begin{theorem}
 There exist  sequences $\check \delta_j$, $\tilde \delta_j$ ($j\geq 1$) and $\check \lambda_j$ ($j\geq 3$) such that, for any $N$, we have 
\begin{equation}\label{eq:bh1z1}
\delta(n,\beta_n) = \delta_0-\frac 12 +\sum_{1\leq j \leq N}\check \delta_j \, n ^{-j/2} + {\mathcal O}(n ^{-\frac{N+1}{2}})\,,
\end{equation}
\begin{equation}\label{eq:bh1z2}
\delta(n+1,\beta_n) = \delta_0+ \frac 12 +\sum_{1\leq j \leq N}\tilde \delta_j \, n ^{-j/2} + {\mathcal O}(n ^{-\frac{N+1}{2}})\,,
\end{equation}
and 
\begin{equation}\label{eq:bh2z}
 \lambda(n,\beta_n )/\beta_n =  \Theta_0 + \lambda_1 \, n ^{-\frac{1}{2}} + \lambda_2(\delta_0-\frac 12)\, n ^{-1} +\sum_{j=3}^N \check \lambda_j \, n ^{-j/2} + {\mathcal O}(n ^{-\frac{N+1}{2}})\,.
\end{equation}
\end{theorem}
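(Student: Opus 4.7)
The plan is to derive these expansions formally from two ingredients already established in the excerpt: the expansions \eqref{eq:bh1}, \eqref{eq:bh1a}, \eqref{eq:bh2} in powers of $\beta_n^{-1/2}$, and the complete expansion of $\beta_n$ in powers of $n^{-1/2}$ provided by Theorem~\ref{prop9.1}. Each of the three statements should follow by a single substitution; no new spectral argument is required.

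As a first step I would expand $\beta_n^{-j/2}$ as an asymptotic series in $n^{-1/2}$ for every integer $j\ge 1$. Writing
\[\beta_n = 2n\left(1 + \tfrac{\xi_1}{2}\,n^{-1/2} + \tfrac{\hat\kappa_0}{2}\,n^{-1} + \sum_{k\ge 1}\tfrac{\hat\kappa_k}{2}\,n^{-(k+2)/2}\right) + \mathcal{O}(n^{-\infty}),\]
the binomial series $(1+x)^{-j/2}$ applied to the parenthesis yields
\[\beta_n^{-j/2} = (2n)^{-j/2}\Bigl(1 + \sum_{k\ge 1} a_{j,k}\, n^{-k/2}\Bigr) + \mathcal{O}(n^{-\infty}),\]
with coefficients $a_{j,k}$ polynomial in the $\hat\kappa_\ell$ that are computed recursively.

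The second step consists in inserting these expansions into \eqref{eq:bh1}, \eqref{eq:bh1a} and \eqref{eq:bh2} and collecting terms according to powers of $n^{-1/2}$; this defines the sequences $\check\delta_j$, $\tilde\delta_j$ and $\check\lambda_j$ recursively. The constant terms $\delta_0\mp\tfrac12$ in \eqref{eq:bh1z1}--\eqref{eq:bh1z2} are consistent with the specific values $\xi_1 = -2^{3/2}\xi_0$ and $\hat\kappa_0 = 1-2\delta_0+2\xi_0^2$ from Theorem~\ref{prop9.1}: substituting the expansion of $\beta_n$ directly into $\delta(n,\beta_n) = n - \beta_n/2 - \xi_0\sqrt{\beta_n}$ and using $\xi_1/2 = -\sqrt{2}\,\xi_0$, the terms in $n$ and in $n^{1/2}$ cancel and the constant works out to $\delta_0-\tfrac12$ (and analogously to $\delta_0+\tfrac12$ for the $(n+1)$ branch).

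To handle the error terms, I would use that $\beta_n = 2n\bigl(1+\mathcal{O}(n^{-1/2})\bigr)$, which gives $\beta_n^{-(M+1)/2} = \mathcal{O}(n^{-(M+1)/2})$ for every $M$. Hence any remainder $\mathcal{O}(\beta_n^{-(M+1)/2})$ in the starting expansion translates to $\mathcal{O}(n^{-(M+1)/2})$ in the final one, so to reach precision $n^{-(N+1)/2}$ it suffices to truncate the $\beta_n$-expansion of Theorem~\ref{prop9.1} at an index $N'=N'(N)$ large enough that the resulting substitutions contribute nothing below the target order. The main obstacle is not analytic but purely combinatorial: careful bookkeeping of the contributions of half-integer powers through the substitution, together with the verification that the new coefficients $\check\delta_j$, $\tilde\delta_j$, $\check\lambda_j$ are expressible explicitly in terms of $\xi_0$, $C_0$, $C_1$, $\delta_0$ and the $\hat\kappa_\ell$.
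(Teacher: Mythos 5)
Your proposal is correct and is precisely the argument the paper intends: the paper gives no explicit proof of this theorem beyond the phrase ``From the complete expansion of $\beta_n$, we deduce'', and the intended deduction is exactly your substitution of the $n^{-1/2}$-expansion of $\beta_n$ from Theorem~\ref{prop9.1} into \eqref{eq:bh1}, \eqref{eq:bh1a} and \eqref{eq:bh2}, followed by re-expansion via the binomial series and the bookkeeping of remainders you describe. Your verification that the leading constants $\delta_0\mp\tfrac12$ survive the change of expansion variable is a worthwhile sanity check that the paper omits.
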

 \section{Strong diamagnetism revisited}
 \label{sec.strongdiamag}
 \subsection{Introduction}
 In the case of the disk, it was also proved in \cite{FH1} (Proposition 2.7) the following statement called ``strong diamagnetism".

\begin{theorem}
\label{DiamagDisc}~\\
Let $\mathbb D$ be the unit disk. Then the left- and right-hand derivatives $\lambda_{\pm}'(\beta)$ exist and satisfy
\begin{align}\label{eq:lb}
&\lambda_{+}'(\beta) \leq  \lambda_{-}'(\beta)\;, \nonumber\\
&\liminf_{\beta \rightarrow + \infty} \lambda_{+}'(\beta) \geq
\Theta_0 - \tfrac{3}{2}C_1 |\xi_0| >0\;.
\end{align}
In particular, $\beta \mapsto \lambda(\beta)$ is strictly increasing for large $\beta$.
\end{theorem}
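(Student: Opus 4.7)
The plan is to split the statement into a structural part (existence of one-sided derivatives and the inequality $\lambda_+'\le\lambda_-'$) and a quantitative asymptotic part (the liminf estimate), using Theorem \ref{theorem:interlacing} for the former and combining the Feynman--Hellmann formula of Proposition \ref{prop:variation}\,(i) with the asymptotic machinery of Section \ref{subsec:disc} for the latter.

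For the structural part, Theorem \ref{theorem:interlacing}\,(iii) says that on each interval $[\beta_{n-1},\beta_n]$ (and on $[0,\beta_0]$) the map $\lambda$ coincides with the real-analytic branch $\beta\mapsto\lambda(n,\beta)$. Hence $\lambda_\pm'(\beta)$ exist and coincide with the ordinary derivative away from the crossing points, while at a crossing $\beta_n$ one has $\lambda_-'(\beta_n)=\lambda'(n,\beta_n)$ and $\lambda_+'(\beta_n)=\lambda'(n+1,\beta_n)$. The function $h(\beta):=\lambda(n,\beta)-\lambda(n+1,\beta)$ vanishes at $\beta_n$, is $\le 0$ immediately to the left (where the minimal branch is $n$) and $\ge 0$ immediately to the right (where it is $n+1$), so $h'(\beta_n)\ge 0$, that is $\lambda_+'(\beta_n)\le\lambda_-'(\beta_n)$.

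For the liminf, I would apply Proposition \ref{prop:variation}\,(i) in the form
\[
\lambda'(n,\beta)=\eta(n,\beta)+\frac{\beta}{2}\,g_{n,\beta}(\beta^{1/2})^2\Bigl(\bigl(\tfrac{n}{\beta^{1/2}}-\tfrac{\beta^{1/2}}{2}\bigr)^2-\eta(n,\beta)\Bigr).
\]
Using $\Theta_0=\xi_0^2$, the identity $\tfrac{n}{\beta^{1/2}}-\tfrac{\beta^{1/2}}{2}=\xi_0+\delta(n,\beta)\beta^{-1/2}$, and the leading expansion $\eta(n,\beta)=\Theta_0-C_1\beta^{-1/2}+O(\beta^{-1})$, the parenthesis simplifies to $\beta^{-1/2}(2\xi_0\delta(n,\beta)+C_1)+O(\beta^{-1})$. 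The $H^1$ convergence \eqref{eq:4.24} of the eigenfunction of $\mathfrak h(\delta,\beta)$ to $u_0$, together with the rescaling relating $f_{n,\beta}(r)$ to the boundary coordinate $\tau=\sqrt{\beta}(1-r)$ and a trace bound, then yields the boundary estimate
\[
g_{n,\beta}(\beta^{1/2})^2=\beta^{-1/2}u_0(0)^2+O(\beta^{-1})=3C_1\beta^{-1/2}+O(\beta^{-1}),
\]
uniformly as long as $|\delta(n,\beta)|$ stays bounded. Substituting gives
\[
\lambda'(n,\beta)=\Theta_0+3C_1\xi_0\,\delta(n,\beta)+\tfrac{3}{2}C_1^2+o(1)\qquad(\beta\to+\infty).
\]
On $(\beta_{n-1},\beta_n)$ one has $\lambda_+'(\beta)=\lambda'(n,\beta)$, and the strict decrease of $\beta\mapsto\delta(n,\beta)$ for large $\beta$ combined with \eqref{eq:bh1a} gives $\delta(n,\beta)\le\delta_0+\tfrac{1}{2}+o(1)$; at a crossing $\beta_n$, $\lambda_+'(\beta_n)$ corresponds to $\delta(n+1,\beta_n)=\delta_0-\tfrac{1}{2}+o(1)$, which produces a larger value and is not the critical regime. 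Using the identity $\delta_0=-C_1/(2\xi_0)$ from Section \ref{sec:sj-implement}, we find $2\xi_0(\delta_0+\tfrac{1}{2})+C_1=\xi_0=-|\xi_0|$, whence $\lambda_+'(\beta)\ge\Theta_0-\tfrac{3}{2}C_1|\xi_0|+o(1)$. The strict positivity $\Theta_0-\tfrac{3}{2}C_1|\xi_0|>0$ is checked numerically from $\Theta_0\simeq 0.590$, $C_1\simeq 0.254$, $|\xi_0|\simeq 0.768$.

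The main obstacle is upgrading the $L^2$-type control of \eqref{eq:4.24} to a uniform pointwise boundary estimate for $g_{n,\beta}(\beta^{1/2})^2$. This relies on the fact that the correctors $u_j$ lie in $\mathcal{S}(\overline{\mathbb R_+})$ so that the trial state \eqref{eq:trial} has a well-controlled trace at $\tau=0$, combined with the Agmon decay of the true eigenfunction (which localizes it in a bounded $\tau$-window); a standard trace inequality then transfers $H^1$ proximity into boundary-value proximity, uniformly in $\delta$ on compact sets. Once this pointwise boundary control is secured, the rest is an algebraic matching between the expansion of $\lambda'(n,\beta)$ and the Saint-James locations of $\delta(n,\beta_{n-1})$ and $\delta(n+1,\beta_n)$.
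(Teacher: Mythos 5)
Your argument is essentially sound, but note first that the paper does not reprove Theorem \ref{DiamagDisc}: it is quoted from \cite[Proposition 2.7]{FH1}, and what the paper actually establishes is the sharper Theorem \ref{th8.4strong}, giving full expansions of $\lambda'_{\pm}(\beta_n)$. Your route --- Feynman--Hellmann in the form of Proposition \ref{prop:variation}\,(i)/Proposition \ref{lem:derivative}, the boundary-trace expansion $g_{n,\beta}(\beta^{1/2})^2=3C_1\beta^{-1/2}+\mathcal O(\beta^{-1})$ obtained from the quasi-mode comparison \eqref{eq:4.24}, and the localization of $\delta(n,\beta)$ via the Saint-James identities --- is exactly the machinery of Sections \ref{ss5.2}--\ref{ss.diamagnetism}, so in substance you have reconstructed the paper's refined proof rather than the original one of \cite{FH1}. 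Two points deserve care. First, you have a sign slip: by \eqref{eq:bh1a} it is $\delta(n+1,\beta_n)=\delta_0+\tfrac12+o(1)$ (and $\delta(n,\beta_n)=\delta_0-\tfrac12+o(1)$ by \eqref{eq:bh1}), not $\delta_0-\tfrac12$ as you wrote; consequently the crossing point is precisely the critical regime, with $\lambda'_+(\beta_n)\to\Theta_0-\tfrac32C_1|\xi_0|$ (Equation \eqref{eq:zzz3}), not a ``larger value''. Your final inequality survives because you independently bound $\delta(n,\beta)\le\delta_0+\tfrac12+o(1)$ on $(\beta_{n-1},\beta_n)$, and $2\xi_0(\delta_0+\tfrac12)+C_1=\xi_0$ yields the same constant, but the reasoning at $\beta=\beta_n$ as written is inconsistent with \eqref{eq:bh1a}. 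Second, the liminf ranges over all large $\beta$, so you genuinely need the boundary estimate for $f_{n,\beta}(1)$ uniformly over the whole interval $(\beta_{n-1},\beta_n]$ with $\delta(n,\beta)$ in a compact set, whereas the paper states it only at $\beta=\beta_n$; your sketch (Schwartz correctors, Agmon decay, trace inequality) is the right way to supply this uniformity, and it is the same mechanism as in Subsection \ref{ss5.2}. The structural part (existence of $\lambda'_{\pm}$ and $\lambda'_+\le\lambda'_-$ from Theorem \ref{theorem:interlacing} and the sign of $h'(\beta_n)$, or directly from Proposition \ref{prop:interlacing}) is correct.
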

Notice that the strong diamagnetism in dimension 3 is considered by Soeren  Fournais and Mikael Persson Sundqvist  in \cite{FP}.
Our  analysis of the Saint-James picture has given a more precise description by determining  the sequence $(\beta_n)$ where the left and right derivatives could differ. It is now  interesting to combine 
the information  given by the two approaches.
Our main  theorem is:
\begin{theorem}\label{th8.4}
We have, for suitable constants $\hat C_j$ and $\check C_j$  ($j\geq 1$),
\begin{equation}\label{eq:8.22}
\lambda'(n,\beta_n ) = \Theta_0
+ \frac 32 C_1 \Theta_0^{1/2}  + \sum_{j\geq 1} \hat C_j n^{-j/2}   + \mathcal O (n^{-\infty})\,.
\end{equation}
and
\begin{equation}\label{eq:8.23}
\lambda'(n+1,\beta_n ) = \Theta_0
- \frac 32 C_1 \Theta_0^{1/2}  + \sum_{j\geq 1} \check C_j n^{-j/2}   + \mathcal O (n^{-\infty})\,.
\end{equation}

\end{theorem}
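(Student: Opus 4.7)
The plan is to express $\lambda'(n,\beta_n)$ and $\lambda'(n+1,\beta_n)$ in closed form through the boundary value of the radial eigenfunction, and then feed in the asymptotic expansions established earlier in Section \ref{s8}. Since $\lambda(m,\beta)=\beta\,\eta(m,\beta)$, one has $\lambda'(m,\beta)=\eta(m,\beta)+\beta\,\eta'(m,\beta)$. I would substitute the formula of Proposition \ref{prop:variation}(i) for $\eta'(m,\beta)$, and at $\beta=\beta_n$ simplify the factor $\left(\tfrac{m}{\beta_n^{1/2}}-\tfrac{\beta_n^{1/2}}{2}\right)^2-\eta(m,\beta_n)$ by means of the Saint-James identity \eqref{eqSingularEta}, exactly as in the proof of Proposition \ref{prop:interlacing}. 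This produces the closed expressions
\begin{equation*}
\lambda'(n,\beta_n)=\eta_n^*+\tfrac{1}{2}\,g_{n,\beta_n}(\sqrt{\beta_n})^2\left(\tfrac{\beta_n}{2}-n\right),\quad
\lambda'(n+1,\beta_n)=\eta_n^*-\tfrac{1}{2}\,g_{n+1,\beta_n}(\sqrt{\beta_n})^2\left(\tfrac{\beta_n}{2}-(n+1)\right).
\end{equation*}

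The next step is to obtain a full asymptotic expansion of $g_{n,\beta_n}(\sqrt{\beta_n})^2$ and $g_{n+1,\beta_n}(\sqrt{\beta_n})^2$ in powers of $n^{-1/2}$. Under the change of variable $\tau=\sqrt{\beta}-\rho$, the function $\widetilde{\phi}(\tau):=\beta^{1/4}g_{m,\beta}(\sqrt{\beta}-\tau)$ is normalized in $L^2\bigl((0,\sqrt{\beta});(1-\tau/\sqrt{\beta})\,d\tau\bigr)$ and satisfies the eigenvalue equation of $\mathfrak h(\delta(m,\beta),\beta)$. The radial Agmon estimates recalled in \eqref{eq:trekant} (and the cut-off $\chi_\beta$ from \eqref{eq:trial}) allow me to identify $\widetilde{\phi}$ with $u_{\delta(m,\beta),\beta}$ up to errors of order $\mathcal O(\beta^{-\infty})$, hence
\[
g_{m,\beta}(\sqrt{\beta})^2 = \beta^{-1/2}\,u_{\delta(m,\beta),\beta}(0)^2+\mathcal O(\beta^{-\infty})\,.
\]
Combining this with the $H^1$-expansion \eqref{eq:4.24} and the one-dimensional Sobolev embedding $H^1\hookrightarrow C^0$ on any bounded interval containing $0$, I get a complete asymptotic expansion of $u_{\delta,\beta}(0)^2$ in powers of $\beta^{-1/2}$ with coefficients polynomial in $\delta$, valid uniformly for $\delta$ bounded (which is guaranteed by Lemma \ref{lemLower}).

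Finally I would substitute $\beta=\beta_n$ together with $\delta=\delta(n,\beta_n)$, respectively $\delta(n+1,\beta_n)$, and expand everything in powers of $n^{-1/2}$ via \eqref{eq:7.13z}, \eqref{eq:bh1z1}, \eqref{eq:bh1z2}, \eqref{eq:bh2ab}. At leading order, $\beta_n/2-n=\tfrac{\xi_1}{2}\,n^{1/2}+\mathcal O(1)=-\sqrt{2}\,\xi_0\,n^{1/2}+\mathcal O(1)$ and $\beta_n^{-1/2}=(2n)^{-1/2}+\mathcal O(n^{-1})$, so the product $\beta_n^{-1/2}\bigl(\beta_n/2-n\bigr)$ converges to $-\xi_0=\Theta_0^{1/2}$; together with $u_0(0)^2=3C_1$ from \eqref{eq:C1}, this reproduces the leading correction $+\tfrac{3}{2}C_1\Theta_0^{1/2}$ in \eqref{eq:8.22}. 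The analogous computation for $\lambda'(n+1,\beta_n)$ differs only by the overall minus sign in front of the correction and by the use of $\delta(n+1,\beta_n)$ in the expansion of the eigenfunction at $\tau=0$ (which affects only the subleading terms, since $u_0$ is $\delta$-independent); this yields $-\tfrac{3}{2}C_1\Theta_0^{1/2}$ in \eqref{eq:8.23}. The coefficients $\hat C_j$ and $\check C_j$ at order $n^{-j/2}$ are then obtained by collecting, order by order, the polynomial contributions of $u_j(0)$, of the expansions of $\delta(n,\beta_n)$, $\delta(n+1,\beta_n)$, $\beta_n$, and of $\eta_n^*$.

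The main obstacle I anticipate is the rigorous transfer of information from the radial function $g_{m,\beta_n}$ to the effective function $u_{\delta,\beta}$ evaluated at the boundary point $\tau=0$: one must upgrade the $H^1$-bounds \eqref{eq:4.24} to pointwise control at the endpoint, and certify that both the Agmon tails and the cut-off $\chi_\beta$ generate only $\mathcal O(\beta^{-\infty})$ corrections, uniformly in the relevant range of $\delta$. Once this transfer step is in place, the remainder is a routine (if tedious) algebraic bookkeeping built on the asymptotics of Section \ref{s8}.
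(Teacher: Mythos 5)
Your proposal is correct and follows essentially the same route as the paper: the Feynman--Hellmann identity of Proposition \ref{lem:derivative} (your closed formulas are just that identity rewritten at $\beta=\beta_n$, with the factor $\lambda-(n-\beta/2)^2$ pre-simplified to $\beta_n/2-n$ via \eqref{eqSingularEta}), combined with the boundary-trace expansion $f_{m,\beta}(1)=\beta^{1/4}u_{\delta(m,\beta),\beta}(0)+\mathcal O(\beta^{-\infty})$ obtained from the quasimode estimates \eqref{eq:4.24} and the Agmon/cut-off arguments, and finally substitution of the expansions of $\beta_n$ and $\delta(n,\beta_n)$, $\delta(n+1,\beta_n)$. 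Your leading-order bookkeeping ($\beta_n^{-1/2}(\beta_n/2-n)\to-\xi_0=\Theta_0^{1/2}$, $u_0(0)^2=3C_1$) and the sign discussion for the $n+1$ branch match the paper's Subsections \ref{ss5.2}--\ref{ss.diamagnetism}.
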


We come back to $\lambda(n,\beta)$ since we are interested  in the asymptotics of $\lambda'(n,\beta)$.  We could start from
\begin{equation}
\lambda(n,\beta)  = \Theta_0 \beta + \lambda_1 \beta^{\frac{1}{2}} + \lambda_2 (\delta(n,\beta))) +\sum_{j=3}^N \lambda_j(\delta(n,\beta)) \beta^{1 -j/2} + {\mathcal O}(\beta^{1-\frac{N+1}{2}})\,,
\end{equation}
which is a consequence of \eqref{eq:trekant}, \eqref{eq:stjerne} and \eqref{eq:4.21}.\\
The only difficulty would be to justify the derivation with respect to $\beta$.  To avoid the difficulty we will use the Feynman-Hellmann formula.

\subsection{About $f_{n,\beta_n }$ and $f_{n+1,\beta_n }$ .}
\label{ss5.2}
Our aim is to prove:
\begin{proposition} For any $N>0$, 
\begin{align}
\beta_n^{-1/4} f_{n,\beta_n} (1)&=  u_0(0) + \sum_{j=1}^N \hat \kappa_j \beta_n^{-j/2}+\mathcal O (\beta^{-(N+1)/2})\,,\\
\beta_n^{-1/4} f_{n+1,\beta_n} (1)&=  u_0(0) + \sum_{j= 1}^N \check \kappa_j \beta_n^{-j/2}+\mathcal O (\beta^{-(N+1)/2})\,,
\end{align}
or
\begin{align}
\beta_n^{-1/4} f_{n,\beta_n} (1)&=  u_0(0) + \sum_{j= 1}^N \kappa_j  n^{-j/2}+\mathcal O (n^{-(N+1)/2})\,,\\
\beta_n^{-1/4} f_{n+1,\beta_n} (1)&=  u_0(0) + \sum_{j= 1}^N \tilde\kappa_j  n^{-j/2}+\mathcal O (n^{-(N+1)/2})\,
\end{align}
\end{proposition}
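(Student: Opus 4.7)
The strategy is to express $\beta_n^{-1/4} f_{n,\beta_n}(1)$ as the value at $\tau=0$ of a rescaled one-dimensional eigenfunction that, up to an exponentially small error, coincides with the normalised eigenfunction $u_{\delta,\beta_n}$ of $\mathfrak h(\delta,\beta_n)$ studied in Section~\ref{subsec:disc}, and then to insert the expansions of $\delta(n,\beta_n)$, $\delta(n+1,\beta_n)$ and $\beta_n$ already established.

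Set $\tau := \sqrt{\beta_n}\,(1-r)$ and
\[
\tilde f_{n}(\tau) := \beta_n^{-1/4} f_{n,\beta_n}\!\left(1 - \tau/\sqrt{\beta_n}\right), \qquad \tau \in [0,\sqrt{\beta_n}\,].
\]
A direct change of variable, combined with the Agmon estimates in the normal direction (Theorem~4.1 in \cite{FH1}, already used to obtain \eqref{eq:trekant}) which give that the mass of $f_{n,\beta_n}^2\,r\,dr$ inside $D(1/2)$ is $\mathcal O(\beta_n^{-\infty})$, shows that $\tilde f_n$ has unit $L^2((0,\sqrt{\beta_n}/2),(1-\beta_n^{-1/2}\tau)d\tau)$-norm up to an $\mathcal O(\beta_n^{-\infty})$ error. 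It satisfies the Neumann condition at $\tau=0$ coming from the one at $r=1$, and an exponentially small Dirichlet-type boundary condition at $\tau=\sqrt{\beta_n}/2$. Since, by Proposition~\ref{prop:approx}, the spectrum of $\mathfrak h(\delta(n,\beta_n),\beta_n)$ close to $\eta(n,\beta_n)\to\Theta_0<1$ reduces to a simple eigenvalue for $n$ large, the positivity of $f_{n,\beta_n}$ and of $u_0$ forces the sign to match, and we obtain
\[
\beta_n^{-1/4} f_{n,\beta_n}(1) = \tilde f_n(0) = u_{\delta(n,\beta_n),\beta_n}(0) + \mathcal O(\beta_n^{-\infty}).
\]

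Next, I would apply \eqref{eq:4.24} with $M=1$ together with the Sobolev embedding $H^1((0,1))\hookrightarrow C^0([0,1])$, which turns the $H^1$ control of \eqref{eq:4.24} into pointwise control at $\tau=0$:
\[
u_{\delta,\beta_n}(0) = u_0(0) + \sum_{j=1}^N \beta_n^{-j/2}\, u_j(0) + \mathcal O(\beta_n^{-(N+1)/2}),
\]
where each $u_j(0)$ is a polynomial in $\delta$ (this follows from the recursive definition of the $u_j$ through $R_0$ and the operators $\mathfrak h_1,\mathfrak h_2,\ldots$ of \eqref{eq:hs}). Substituting $\delta = \delta(n,\beta_n)$ and using the expansion \eqref{eq:bh1} of $\delta(n,\beta_n)$ in powers of $\beta_n^{-1/2}$, then collecting like powers, yields the first stated expansion and an explicit recipe for the constants $\hat\kappa_j$ in terms of $u_j(0)$ and the coefficients $\hat\delta_j$ of \eqref{eq:bh1}. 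Replacing \eqref{eq:bh1} by \eqref{eq:bh1a} gives the analogous expansion for $\beta_n^{-1/4}f_{n+1,\beta_n}(1)$ with coefficients $\check\kappa_j$. Finally, the expansions in powers of $n^{-1/2}$ are obtained by substituting \eqref{eq:7.13z}, which provides $\beta_n^{-1/2}$ as an asymptotic series in $n^{-1/2}$, into the two expansions just derived.

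The main technical point is the passage from the $H^1$-norm bound \eqref{eq:4.24} to a pointwise bound at the boundary $\tau=0$, which is harmless via the Sobolev embedding on $[0,1]$. The rest is bookkeeping: tracking normalisation constants through the change of variables $r\mapsto 1-r\mapsto\sqrt{\beta_n}(1-r)$, controlling the Agmon cut-off (exponentially small, hence absorbed in any $\mathcal O(\beta_n^{-\infty})$), and fixing the sign using positivity of $f_{n,\beta_n}$ and of $u_0$. All four formulas follow from the same argument, applied to $n$ or $n+1$ and re-expressed either in $\beta_n^{-1/2}$ or in $n^{-1/2}$.
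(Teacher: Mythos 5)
Your proposal is correct and follows essentially the same route as the paper: comparing $f_{n,\beta_n}$ (after the boundary-coordinate rescaling that produces the $\beta_n^{1/4}$ factor and an $\mathcal O(\beta_n^{-\infty})$ disk-versus-annulus error) with the quasi-mode expansion \eqref{eq:4.24}, taking the trace at $\tau=0$, and then inserting the expansions of $\delta(n,\beta_n)$, $\delta(n+1,\beta_n)$ and $\beta_n$. You merely make explicit some steps the paper leaves implicit (the Agmon localisation, the Sobolev embedding behind ``taking the trace'', and the sign-matching via positivity), which is consistent with the intended argument.
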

\begin{proof}
 This is actually a consequence of the comparison between $f_{n,\beta}$ and the quasi-mode constructed in \eqref{eq:4.24}. Taking the trace at $0$, we get
$$
u_{\delta(n,\beta),\beta} (0)= u_0(0) + \sum_{j=1}^N \hat c_j (\delta) \beta^{-j/2} + \mathcal O ( \beta^{-(N+1)/2}) \,.
$$
We finally observe that
$$
f_{n,\beta} (1)= \beta^{1/4} u_{\delta(n,\beta),\beta} (0) +\mathcal O (\beta^{-\infty})\,.
$$
The power of $\beta$ appears when doing the change of variable $t=\beta^{-1/2} \tau$. The error comes from the comparison between the disk and the annulus. We can take $\beta=\beta_n $, the comparison between $\beta_n $ and $n$ 
and  the complete expansion of $\delta (n,\beta_n)$ to achieve the proof of the expansions of $f_{n,\beta_n}(1)$. A similar reasoning, substituting $\delta(n+1,\beta_n)$ for $\delta(n,\beta_n)$, gives the expansions of $f_{n+1,\beta_n}(1)$.
\end{proof}
In particular, we get
 \begin{equation}\label{eq:exp}
  (\beta_n )^{-1/2} | f_{n,\beta_n }(1) |^2 = u(0)^2 + \sum_{j\geq 1}\hat K_j n^{-j/2} + \mathcal O(n^{-\infty})\,
  \end{equation}
  and
  \begin{equation}\label{eq:expBis}
  (\beta_n )^{-1/2} | f_{n+1,\beta_n }(1) |^2 = u(0)^2 + \sum_{j\geq 1}\check K_j n^{-j/2} + \mathcal O(n^{-\infty})\,.
  \end{equation}

\subsection{Feynman-Hellmann formula in the asymptotic limit}\label{ss5.3}
In order to support Conjecture \ref{conj:monotone}, let us present some additional results on the variation of $\beta\mapsto \lambda(n,\beta)$. Some of these computations appear already in \cite{FPS}. Combined with the asymptotic expansions
  given in Subsection \ref{ss5.2}, this gives a way to compute $\lambda'(n,\beta)$.

\begin{proposition}\label{lem:derivative} For $n\in\mathbb N$, for all $\beta>0$,
\begin{equation}\label{eq:derivative}
\lambda'(n,\beta)= \frac{\lambda(n,\beta)}{\beta} - \frac{1}{2\beta} \left( \lambda(n,\beta) - \big(n-\frac \beta  2\big)^2 \right) |f_{n,\beta}(1)|^2\,.
\end{equation}
\end{proposition}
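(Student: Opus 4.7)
The plan is to combine the Feynman--Hellmann formula for the family $\beta \mapsto H_{n,\beta}$ with a Pohozaev-type identity obtained by testing the eigenvalue equation against $r^{2} f'_{n,\beta}$. Writing $W(r) := \beta r/2 - n/r$, only the potential $V = W^{2}$ depends on $\beta$, with $\partial_{\beta} V = 2W \cdot (r/2) = rW$. Feynman--Hellmann in $L^{2}((0,1), r\, dr)$ therefore gives the starting point
\[
\lambda'(n,\beta) \;=\; \int_{0}^{1} (rW)\, f_{n,\beta}^{2}\, r\, dr \;=\; \int_{0}^{1} r^{2}\, W\, f_{n,\beta}^{2}\, dr.
\]

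Next, I multiply the eigenvalue equation $-f'' - r^{-1} f' + W^{2} f = \lambda f$ (writing $f = f_{n,\beta}$, $\lambda = \lambda(n,\beta)$) by $r^{2} f'$ and integrate over $(0,1)$. Using $f'' f' = \tfrac{1}{2}((f')^{2})'$ and $f f' = \tfrac{1}{2}(f^{2})'$, one integration by parts in each of the three resulting integrals on the left makes the two kinetic terms cancel exactly, thanks to the Neumann condition $f'(1) = 0$ and the vanishing of $r^{2}(f')^{2}$ at $r = 0$. The potential term becomes
\[
\int_{0}^{1} r^{2} W^{2} f f'\, dr \;=\; \tfrac{1}{2}\, W(1)^{2} f(1)^{2} \;-\; \tfrac{1}{2} \int_{0}^{1} (r^{2} W^{2})' f^{2}\, dr,
\]
and the key algebraic identity
\[
(r^{2} W^{2})' \;=\; \bigl((\tfrac{\beta}{2} r^{2} - n)^{2}\bigr)' \;=\; 2 \beta\, r^{2} W
\]
shows that the remaining integral is exactly $\beta$ times the Feynman--Hellmann integral above. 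The right-hand side $\lambda \int_{0}^{1} r^{2} f f'\, dr$, after one integration by parts and using the normalization $\int_{0}^{1} f^{2} r\, dr = 1$, equals $\tfrac{\lambda}{2} f(1)^{2} - \lambda$. Collecting everything yields
\[
\tfrac{1}{2} (n - \tfrac{\beta}{2})^{2} |f(1)|^{2} - \beta\, \lambda'(n,\beta) \;=\; \tfrac{\lambda}{2} |f(1)|^{2} - \lambda,
\]
which rearranges to the claimed identity, once we observe that $W(1)^{2} = (n - \beta/2)^{2}$.

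The only mildly delicate point is the bookkeeping at the singular endpoint $r = 0$, where one needs the boundary contributions $[r^{2}(f')^{2}]_{0}$, $[r^{2} W^{2} f^{2}]_{0}$ and $[r^{2} f^{2}]_{0}$ all to vanish. For $n \geq 1$ this is immediate from the indicial behaviour $f(r) \sim C r^{n}$ as $r \to 0^{+}$, so each term decays like $r^{2n}$; for $n = 0$, the Neumann condition $f'(0) = 0$ together with $W(0) = 0$ makes every boundary contribution trivially zero. As an alternative route, the same formula can be extracted directly from Point (i) of Proposition \ref{prop:variation} by the rescaling $g_{n,\beta}(\rho) = \beta^{-1/2} f_{n,\beta}(\beta^{-1/2}\rho)$ combined with the chain rule $\lambda'(n,\beta) = \beta\, \eta'(n,\beta) + \eta(n,\beta)$, which bypasses the Pohozaev identity altogether.
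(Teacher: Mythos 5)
Your argument is correct, but your main route is genuinely different from the paper's. The paper obtains \eqref{eq:derivative} in two lines: it writes $\lambda(n,\beta)=\beta\,\eta(n,\beta)$, differentiates, and substitutes Point (i) of Proposition \ref{prop:variation} (which itself comes from the Dauge--Helffer formula for the variation of the endpoint of a Sturm--Liouville problem), together with $g_{n,\beta}(\beta^{1/2})=\beta^{-1/2}f_{n,\beta}(1)$ --- this is exactly the ``alternative route'' you mention in your last sentence. Your primary derivation instead combines the Feynman--Hellmann identity $\lambda'(n,\beta)=\int_0^1 r^2 W f^2\,dr$ (which the paper also records, following Fournais--Persson Sundqvist) with a Pohozaev/Rellich identity obtained by pairing the eigenvalue equation with $r^2 f'$. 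I checked the computation: the two kinetic contributions do cancel after one integration by parts, the identity $(r^2W^2)'=2\beta r^2 W$ is what converts the potential term into $\beta\lambda'(n,\beta)$ plus the boundary value $\tfrac12 W(1)^2 f(1)^2=\tfrac12(n-\beta/2)^2 f(1)^2$, and the right-hand side yields $\tfrac{\lambda}{2}f(1)^2-\lambda$ by the normalization; solving for $\lambda'$ gives exactly \eqref{eq:derivative}. Your treatment of the endpoint $r=0$ is also adequate: the behaviour $f\sim Cr^n$ is guaranteed by the Kummer representation \eqref{eqFKummer}, so all three boundary terms vanish there, including in the Neumann case $n=0$ where $W(0)=0$. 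What your approach buys is self-containedness: it re-derives the content of Proposition \ref{prop:variation}(i) for this specific operator without invoking the Dauge--Helffer endpoint-variation theorem, at the cost of a longer computation; the paper's proof is shorter because that machinery has already been set up.
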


\begin{proof} Using the notation of Sections \ref{s:curves} and \ref{s3}, we have, for $\beta>0$, 
\[\lambda(n,\beta)=\beta\,\eta(n,\beta)\]
and
\[\lambda'(n,\beta)=\eta(n,\beta)+\beta\,\eta'(n,\beta)\,.\]
Using Point (i) of Proposition \ref{prop:variation}, we get
\[\lambda'(n,\beta)=\eta(n,\beta)+\frac{\beta}{2}|g_{n,\beta}(\beta^{1/2})|^2\left(\left(\frac{n}{\beta^{1/2}}-\frac{\beta^{1/2}}{2}\right)^2-\eta(n,\beta)\right).\]
Since $g_{n,\beta}(\beta^{1/2})=\beta^{-1/2}f_{n,\beta}(1)$, we obtain the desired formula.
\end{proof}

\subsection{Application to the strong diamagnetism}\label{ss.diamagnetism}

We can use Formula \eqref{eq:derivative} to evaluate numerically the right derivative of $\beta\mapsto \lambda(\beta)$ at the points of intersection $\beta_n $. Indeed, it follows from Theorem~\ref{theorem:groundstate} that 
\begin{equation}\label{lambda+}
\lambda'_+(\beta_n )=\lambda'(n+1,\beta_n )\,,
\end{equation}
and
\begin{equation}\label{lambda-}
\lambda'_-(\beta_n )=\lambda'(n,\beta_n )\,.
\end{equation}
Assuming that Conjecture \ref{conj:monotone} is true, we should find $\lambda'_+(\beta_n )\ge 0$.

On the asymptotic side,
let us compute the term  which appears in \eqref{eq:derivative},
$$
 \frac{1}{2\beta_n } \left( \lambda(n,\beta_n ) - \big(n-\frac {\beta_n }{2}\big)^2 \right) \,.
 $$
 Using the improvement of  \eqref{eq:bh1}, 
 $$
 n- \frac {\beta_n }{2}=  \xi_0 (\beta_n )^{1/2} + \delta_0 - \frac 12 + \mathcal O (n^{-1/2})\,,
 $$
 and the expansion of $\lambda(n,\beta_n )$ given in \eqref{eq:bh2}, we obtain
 $$
 \lambda(n,\beta_n ) - \big(n-\frac {\beta_n }{2}\big)^2 = (- C_1 -  2 \xi_0 (\delta_0 - \frac 12)) (\beta_n )^{\frac 12} + \mathcal O (1)\,,
 $$
 and 
  $$
(\beta_n )^{-1}\Big( \lambda(n,\beta_n ) - \big(n-\frac {\beta_n }{2}\big)^2\Big) = \xi_0  (\beta_n )^{-\frac 12} + \mathcal O(n^{-1})\,.
 $$
 Using the asymptotics of $\beta_n $, $\lambda(\beta_n )$ and a complete expansion for $f_{n,\beta_n}$, we have the proof of \eqref {eq:8.22} in Theorem \ref{th8.4}. A similar proof, substituting $n+1$ for $n$ and using the expansion for  $f_{n+1,\beta_n}$, gives \eqref{eq:8.23}. Using \eqref{lambda-} and \eqref{lambda+} we deduce from Theorem \ref{th8.4}
 \begin{theorem}\label{th8.4strong}
We have 
\begin{equation}\label{eq:8.22a}
 \lambda'_- (\beta_n)= \Theta_0
+ \frac 32 C_1 \Theta_0^{1/2}  + \sum_{j\geq 1} \hat C_j  n^{-j/2}   + \mathcal O (n^{-\infty})\,,
\end{equation}
and 
\begin{equation}\label{eq:8.22b}
 \lambda'_+ (\beta_n)= \Theta_0
- \frac 32 C_1 \Theta_0^{1/2}  + \sum_{j\geq 1} \check C_j  n^{-j/2}   + \mathcal O (n^{-\infty})\,.
\end{equation}
\end{theorem}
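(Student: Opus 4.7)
The plan is to derive Theorem \ref{th8.4strong} directly as a corollary of Theorem \ref{th8.4}, by invoking the two identities \eqref{lambda+} and \eqref{lambda-} that link the one-sided derivatives of $\beta \mapsto \lambda(\beta)$ at the crossing point $\beta_n$ to the derivatives of the individual branches $\beta \mapsto \lambda(n,\beta)$ and $\beta \mapsto \lambda(n+1,\beta)$. No new asymptotic analysis is needed; only the matching of the branch structure near $\beta_n$ is used.

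First I would justify \eqref{lambda+} and \eqref{lambda-}. By Theorem \ref{theorem:groundstate}, there exists a neighborhood $[\beta_{n-1},\beta_n]$ on which $\lambda(\beta)=\lambda(n,\beta)$ (with $\beta_{-1}:=0$ in the case $n=0$), and a neighborhood $[\beta_n,\beta_{n+1}]$ on which $\lambda(\beta)=\lambda(n+1,\beta)$. Since both $\beta \mapsto \lambda(n,\beta)$ and $\beta \mapsto \lambda(n+1,\beta)$ are real-analytic (by standard perturbation theory, as noted in Section \ref{s:intro}), the left and right derivatives of $\beta \mapsto \lambda(\beta)$ at $\beta_n$ exist and are given by the ordinary derivatives of the corresponding branches. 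This yields \eqref{lambda-} and \eqref{lambda+} respectively.

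Next, I would insert the asymptotic expansions from Theorem \ref{th8.4} directly. Using \eqref{lambda-} and \eqref{eq:8.22}, we obtain
\[
\lambda'_-(\beta_n)=\lambda'(n,\beta_n)=\Theta_0+\tfrac{3}{2}C_1\Theta_0^{1/2}+\sum_{j\ge 1}\hat C_j n^{-j/2}+\mathcal O(n^{-\infty}),
\]
which is \eqref{eq:8.22a}. Similarly, combining \eqref{lambda+} with \eqref{eq:8.23} gives \eqref{eq:8.22b}.

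Since Theorem \ref{th8.4} is already established, there is no significant obstacle: the only point that deserves care is the identification of the one-sided derivatives with the branch derivatives at the crossing, which relies on the fact (from Theorem \ref{theorem:groundstate}) that $\lambda(\beta)$ coincides identically with a single analytic branch on each side of $\beta_n$, and that at $\beta_n$ exactly two branches (namely $n$ and $n+1$) cross. The orientation (which branch is active to the left and which to the right) is fixed by Proposition \ref{prop:interlacing}, since $\lambda'(n,\beta_n)>0$ and $\lambda'(n+1,\beta_n)<0$ (in the $\eta$-sense, transferred to $\lambda$ using $\lambda=\beta\eta$), so the $n$-branch is the minimum for $\beta<\beta_n$ and the $(n+1)$-branch for $\beta>\beta_n$. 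This completes the deduction.
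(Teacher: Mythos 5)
Your proposal is correct and is essentially the paper's own argument: the paper likewise obtains Theorem \ref{th8.4strong} by combining the identities \eqref{lambda+}--\eqref{lambda-} (which it attributes to Theorem \ref{theorem:groundstate}) with the expansions \eqref{eq:8.22}--\eqref{eq:8.23} of Theorem \ref{th8.4}. One small caveat in your closing remark: the sign $\eta'(n+1,\beta_n)<0$ from Proposition \ref{prop:interlacing} does \emph{not} transfer to $\lambda'(n+1,\beta_n)<0$ (indeed $\lambda'=\eta+\beta\eta'$ and numerically $\lambda'(n+1,\beta_n)>0$), but this does not affect your deduction since the branch ordering on either side of $\beta_n$ is already supplied by Theorem \ref{theorem:groundstate}.
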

In particular, we get
\begin{equation}\label{eq:zzz3}
\lim_{n\rightarrow +\infty} \lambda'(n+1,\beta_n )= \Theta_0- \frac 32 C_1 |\xi_0|\,,
\end{equation}
and
\begin{equation}\label{eq:zzz4}
\lim_{n\rightarrow +\infty} \lambda'(n,\beta_n )= \Theta_0 + \frac 32 C_1 |\xi_0|\,.
\end{equation}

Numerically, 
\begin{align*}
	 \Theta_0-\frac32C_1|\xi_0|&\sim 0.297350,\\
	 \Theta_0+\frac32C_1|\xi_0|&\sim 0.882863.
\end{align*}

 \section{Numerical study of the points of intersection}\label{s6}

Using the formulas of Subsection \ref{s2}, we look for an equation for $\nu$ as a function of $n$. We recall that for a pair $(x,\nu)$ corresponding to an intersection between $\beta\mapsto\eta(n,\beta)$ and $\beta\mapsto\eta(n+1,\beta)$, we have \eqref{eq:sj1aa}, that is
$$
x = (1-2 \nu +n+\frac 12) + \frac 12 \sqrt{ (3-4\nu)^2 + 8 (1-2\nu) n}\,,
$$
and \eqref{eqSys1}, that is 
$$
(n+1)(n-x)M(\nu,n+1,x)+2x\nu M(\nu+1,n+2,x)=0\, .
$$

By elimination of $x$, we get an equation of the form
\begin{equation}\label{eqImplicitSequence}
\Phi(\nu,n) =0\,,
\end{equation}
a priori satisfied  for $(\nu_n,n)$ with  $n\in \mathbb N$, but which could be analyzed for $n\in \mathbb R^+$.\\

Using \emph{Mathematica}, we determined numerically the points of intersection $(\beta_n, \eta_n^*)$ of the curves $\beta\mapsto\eta(n,\beta)$ and $\beta\mapsto\eta(n+1,\beta)$ by solving for $(\beta,\eta)$ 
in the non-linear system consisting of Equations \eqref{eqSys1} and \eqref{eqSys2}, with the integer $n$ ranging from $0$ to $400$. 
These points are displayed on Figure \ref{figIntersections} as black dots, for $n$ from 0 to $19$. A sample of the results for larger $n$ is presented in Table \ref{tabSyst}. This computation does not use Saint-James formula \eqref{eq:sj1aa}. The superscript is used to distinguish these numerical values from those obtained by a second method. We note that the sequence $(\eta_n^*)$ appears to be increasing.

\begin{table}[H]
\begin{center}
	\begin{tabular}[!p]{|c|l|l|}
	\hline
	\multicolumn{1}{|c|}{$n$ }&
	\multicolumn{1}{c}{$\beta_n^{(1)}$}&
	\multicolumn{1}{|c|}{ ${\eta_n^*}^{(1)}$}\\
	\hline
	0&  3.847538710016439&	0.46188467750410933\\
	1&	6.784689992385673&	0.490953836999826\\
	2&	9.495696565685895&	0.5057893193465876\\
	3&	12.091164794355297&	0.5152514126454681\\
	4&	14.613601105384173&	0.5219883372745205\\
	5&	17.08457097842645&	0.5271130898896494\\
	10&	28.989490930878333&	0.5418512305407657\\
	25&	62.88412636538398&	0.55750340973811\\
	50& 117.3339755112376&   0.5663294771262841\\
	100&	223.66235051600012&	0.5729419029706077\\
	200&	432.6371167436942&	0.5777978340023635\\
	300&	639.5318373766472&	0.5799955549150178\\
	400&	845.3470994716895&	0.5813189732301576\\
	\hline
	\end{tabular}
	
\end{center}
\caption{Solutions of the non-linear system}
\label{tabSyst}
\end{table}

\begin{figure}[H]
\begin{center}
	\includegraphics[width=.9\textwidth]{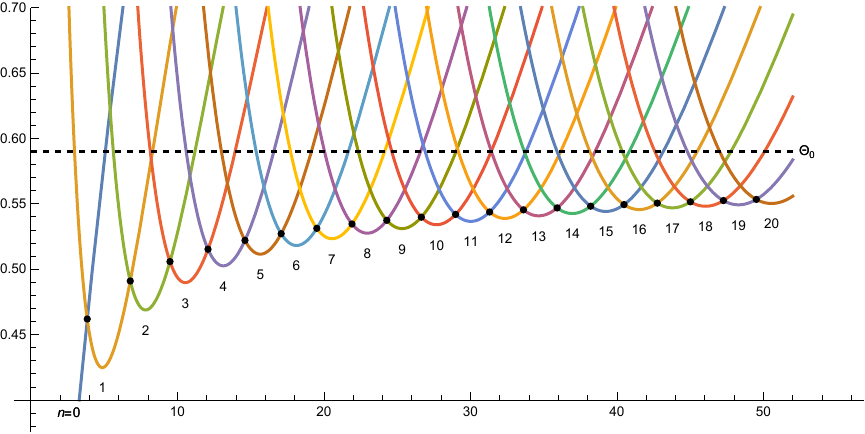}
	\caption{Zoom on the plot of $\beta\mapsto\eta(n,\beta)$ for $n$ from 0 to 20. The  black dashed line is the constant $\Theta_0$.  The black dots mark the points $(\beta_n^{(1)},{\eta_n^*}^{(1)})$.}
	\label{figIntersections}
\end{center}
\end{figure}

 Let us recall  that the sequence
$
\gamma_n:=\beta_{n+1}-\beta_n $
converges to $2$ and, more precisely, admits an asymptotic expansion of the form
\[\gamma_n=2+\sum_{j\ge1} \hat{\gamma}_j\,n^{-j/2}+O\left(n^{-\infty}\right)\]
(see Equation \eqref{eq:7.14z}). After applying four times the Richardson extrapolation\footnote{Assuming that $(y_n)$ is a sequence such that 
\[y_n=\ell+c\,n^{-k/2}+\mathcal O \left(n^{-(k+1)/2}\right)\]
for some $\ell,\,c \in\mathbb R$ and $k\in \mathbb{N}\setminus\{0\}$, the sequence
\[z_n:=\frac{2^{k/2}y_{2n}-y_n}{2^{k/2}-1}\]
satisfies
\[z_n=\ell+\mathcal O\left(n^{-(k+1)/2}\right).\]}, 
we obtain a sequence $(R_4\gamma_n)$ satisfying
\[R_4\gamma_n=2+\mathcal O \left(n^{-5/2}\right)\]
The new sequence therefore has the same limit as the original, with a much faster convergence, and can be used to check numerically the value of the limit. Let us note that we started with a finite number of terms from the sequence $(\gamma_n)$ and that each extrapolation divides by two the number of terms. We chose to extrapolate four times to balance the loss of terms with the gain in speed of convergence. The results are presented in Table \ref{tabDiff}. The blank cells in the rightmost column indicate terms of the sequence lost in the successive extrapolations. The term $\gamma_0$ is excluded from the process.

\begin{table}[h]
\begin{center}
	\begin{tabular}[!p]{|c|l|l|}
	\hline
	\multicolumn{1}{|c|}{$n$ }&
	\multicolumn{1}{c}{$\gamma_{n}$}&
	\multicolumn{1}{|c|}{ $ R_4\gamma_n$}\\
	\hline
	0&	2.9371512823692343&\\
	1&	2.7110065733002218&1.992309139244719\\
	2&	2.595468228669402&1.999385754276766\\
	3&	2.5224363110288763&2.0000780253629706\\
	4&	2.470969873042275&2.0001506529324526\\
	5&	2.4322005977636465&2.000134587773707\\
	6&	2.4016457058734133&2.000108698821863\\
	7&	2.376764086900799&2.0000863469016394\\
	8&	2.355993937178809&2.000068928763249\\	
	9&	2.338315624735216&2.000055641011863\\
	10&	2.3230315388610627&2.00004548367877\\
	24&	2.2159762440866757&2.0000068128960815\\
	50&	2.1516833824983337&\\
	100&	2.107942427892027&\\
	200&	2.076571962353569&\\
	300&	2.0625877291374763&\\
	399&	2.0542993010602686&\\
	\hline
	\end{tabular}
	
\end{center}
\caption{Sequence $\gamma_n=\beta_{n+1}-\beta_n$}
\label{tabDiff}
\end{table}

We computed the sequence $( {\eta_n^*}^{(2)})$, for the same values of $n$, by a second method: numerically solving Equation \eqref{eqImplicitSequence}, which is itself deduced from Saint-James formula \eqref{eq:sj1aa}. The results are presented in Table \ref{tabImpEq}, with the relative variation from the results of the first method, defined as
\[\varepsilon_n:=\frac{|{\eta^*}_n^{(2)}- {\eta^*}_n^{(1)}|}{{\eta^*}_n^{(1)}}.\]
The close agreement confirms Saint-James formula and suggests that our numerical computations are rather accurate.

\begin{table}[h]
\begin{center}
	\begin{tabular}{|c|l|l|}
	\hline
	\multicolumn{1}{|c|}{$n$ }&
	\multicolumn{1}{c}{${\eta_n^*}^{(2)}$}&
	\multicolumn{1}{|c|}{ $\varepsilon_n$}\\
	\hline
	0&	3.8475387100164355&	$10^{-15}$\\
	1&	6.784689992385671&	$2\times 10^{-15}$\\
	2&	9.495696565685904&	$10^{-15}$\\
	3&	12.091164794355297&	$3\times 10^{-15}$\\
	4&	14.613601105384173&	$3 \times 10^{-15}$\\
	5&	17.084570978426473&	$2\times 10^{-15}$\\
	10&	28.989490930878333&	$9\times 10^{-16}$\\
	25&	62.884126365384006&	$2\times 10^{-15}$\\
    50&	117.3339755112376&	$2\times 10^{-16}$\\
    100&	223.66235051600046&	$10^{-15}$\\
    200&	432.63711674369415&	$10^{-15}$\\
    300&	639.5318373766468&	$10^{-15}$\\
    400&	845.3470994716891&	$10^{-15}$\\ 
		\hline
	\end{tabular}
	
\end{center}
\caption{Solutions of the implicit equation, with the relative variation from Table \ref{tabSyst}}
\label{tabImpEq}

\end{table}

Finally, we present the numerical results concerning the left and right derivatives of $\lambda(\beta)$ at $\beta=\beta_n$, obtained with \emph{Mathematica}, in Table~\ref{table:derivatives}. 

To compare the limits \eqref{eq:zzz3} and \eqref{eq:zzz4} with our numerics, we proceed as we did for the sequence $(\gamma_n)$, applying four times the Richardson extrapolation to the numerical sequences. The results are displayed in Table \ref{table:derivatives}.

 All these numerical checks appear consistent with the asymptotic results of Sections \ref{s8} and \ref{sec.strongdiamag}.

\begin{table}[H]
\begin{center}
	\begin{tabular}[!p]{|c|c|c|c|c|c|}
	\hline
	\multicolumn{1}{|c|}{$n$ }&
	\multicolumn{1}{c|}{$\beta_n $}&
	\multicolumn{1}{c|}{ $\lambda'(n,\beta_n )$}&
	\multicolumn{1}{c|}{ $\lambda'(n+1,\beta_n )$}&
	\multicolumn{1}{c|}{ $R_4\lambda'(n,\beta_n )$}&
	\multicolumn{1}{c|}{ $R_4\lambda'(n+1,\beta_n )$}\\
	\hline
0&	3.84754&	0.884743&	0.144907& & \\
1&	6.78469&	0.880478&	0.178114&0.883010&	0.298037\\
2&	9.49570&	0.879482&	0.195254&0.882910&	0.297498\\
3&	12.0912&	0.879172&	0.206280&0.882882&   0.297401\\
4&	14.6136&	0.879085&	0.214181&0.882872&	0.297374\\
5&	17.0846&	0.879088&	0.220223&0.882868&	0.297363\\
6&	19.5168&	0.879130&	0.225048&0.882866&	0.297357\\
7&	21.9184&	0.879190&	0.229024&0.882861&	0.297355\\
8&	24.2952&	0.879257&	0.232376&0.882864&	0.297353\\
9&	26.6512&	0.879326&	0.235256&0.882864&	0.297352\\
10&	28.9895&	0.879394&	0.237767&0.882863&	0.297352\\
25&	62.8841&	0.880145&	0.256706&0.882863&	0.297350\\
50&	117.334&	0.880738&	0.267540&&\\
100&	223.662&	0.881254&	0.275737&&\\
200&	432.637&	0.881671&	0.281801&&\\
300&	639.532&	0.881870&	0.284558&&\\
400&	845.347&	0.881993&	0.286222&&\\
    
	\hline
	\end{tabular}
	\end{center}
	\caption{ Left and right derivatives at the points of intersection}
	\label{table:derivatives}
\end{table}

\end{document}